\documentclass[12pt]{article}
\usepackage{graphicx}  
\usepackage{amssymb, amsthm, amsmath, enumerate}
\usepackage{titlesec}
\usepackage{dsfont}
\usepackage[parfill]{parskip}

\usepackage[a4paper, margin=1in]{geometry}
\usepackage{mathtools}
\usepackage{bbm}

\usepackage{hyperref}
\hypersetup{
    colorlinks=true,
    linkcolor=blue,
    filecolor=magenta,      
    urlcolor=cyan,
    pdftitle={Overleaf Example},
    pdfpagemode=FullScreen,
}
    
\newcommand{\R}{\mathbb{R}}

\newcommand{\dist}{\operatorname{dist}}
\newcommand{\Det}{\operatorname{Det}}

\theoremstyle{plain}
\newtheorem{theorem}{Theorem}[section]
\newtheorem{lemma}[theorem]{Lemma}
\newtheorem{corollary}[theorem]{Corollary}
\newtheorem{proposition}[theorem]{Proposition}
\newtheorem{definition}[theorem]{Definition}

\newtheorem{remark}[theorem]{Remark}
\numberwithin{equation}{section}

\title{Cartan's and Gauss's equations and rigidity theorems for isometric embeddings in low Sobolev regularity}
\author{Isaac Newell\thanks{Mathematical Institute and Hertford College, University of Oxford, Andrew Wiles Building, Radcliffe Observatory Quarter, Woodstock Road, Oxford OX2 6GG, UK. Email: isaac.newell@maths.ox.ac.uk.}~\thanks{Supported by a Clarendon Scholarship from the University of Oxford.} ~ and Luc Nguyen\thanks{Mathematical Institute and St Edmund Hall, University of Oxford, Andrew Wiles Building, Radcliffe Observatory Quarter, Woodstock Road, Oxford OX2 6GG, UK. Email: luc.nguyen@maths.ox.ac.uk.}}
\date{}

\begin{document}

\maketitle
\begin{abstract}
    Let $\{\eta^i\}_{i=1}^2$ be a an orthonormal coframe on a domain $U$ on a smooth surface $(\Sigma,g)$. When $\eta^i$ is smooth, it is well-known that there is a unique connection 1-form $\omega$ verifying Cartan's first structural equations  $d\eta^i = (*\eta^i) \wedge \omega$, and Cartan's second structural equation $d\omega = K_g dvol_g$. We prove that this statement remains valid when the frame is $C^0 \cap H^{\frac12}$, where the structural equations are understood in the sense of distributions. From this, we deduce that the Gauss equation $\Det D^2 f = K_g (1+|Df|^2)^2$ holds for every graphical representation $f$ of an isometric embedding of regularity $C^1 \cap W^{1+\frac23,3}$ or $c^{1,\frac12} \cap BV^2$. As an application, we prove regularity and convexity results for isometric embeddings of closed surfaces and convex caps with $K_g \geq 0$. 
    
    {\it Keywords:} Cartan's equations; Gauss's equation; isometric embeddings; low regularity.
\end{abstract}

\setcounter{tocdepth}{1}
\tableofcontents

\section{Introduction}
In the local study of immersed surfaces in $\R^3$, various formalisms are available to express the relationship between intrinsic and extrinsic geometries. Notably, the equations of Cartan, Codazzi, Darboux and Gauss  play a fundamental role. In this paper, we study the equations of Cartan and Gauss for isometric embeddings with low Sobolev regularity.

Let us begin by recalling these equations in the smooth setting. Suppose that $(\Sigma,g)$ is a smooth, oriented Riemannian surface. Its local intrinsic geometry is described in a local orthonormal frame via the Cartan structural equations. Let $\{e_i\}_{i=1}^2 \subset \Gamma(U;T\Sigma)$ be a smooth orthonormal frame on a domain $U \subset \Sigma$, $\eta^i := g(\cdot, e_i)$ the dual coframe, and let $\nabla$ denote the Levi-Civita connection of $(\Sigma,g)$. Then, there is a unique connection 1-form $\omega(X) := g(\nabla_X e_2,e_1)$ satisfying the first structural equations
\begin{equation} \label{eq:cartan1}
    \begin{cases}
        d\eta^1 = \eta^2 \wedge \omega,\\
        d\eta^2 = -\eta^1 \wedge \omega,
    \end{cases}
\end{equation}
and the second structural equation
\begin{equation} \label{eq:cartan2}
    d\omega = K_g ~dvol_g,
\end{equation}
where $K_g$ and $dvol_g$ denote the Gaussian curvature and Riemannian volume form of $(\Sigma,g)$. We also note that the connection 1-form can be defined using the interior product:
\begin{equation} \label{eq:conn_form_formula}
    \omega = \iota_{e_2}d\eta^1 - \iota_{e_1}d\eta^2.
\end{equation}
For a more detailed account of Cartan's formalism, see Clelland \cite{clelland_17}.

Suppose further that $(\Sigma, g)$ is isometrically embedded in $\R^3$. The extrinsic geometry of this embedding is governed by the Gauss and Codazzi equations. In this paper we consider only the Gauss equation, with respect to a local graphical representation. We assume that $u : U \subset \R^2 \to \R^3$ is a smooth isometric embedding of a smooth metric $g$ such that $u = (\Psi,v)$ where $\Psi = (u^1,u^2) : U \to U'$ is a diffeomorphism with inverse $\Phi$ onto an open set $U' \subset \R^2$. If $f := v \circ \Phi$, then
\begin{equation*}
    u(U) = \{(x,f(x)) : x = (x_1,x_2) \in U'\}
\end{equation*}
and the Gauss equation reads
\begin{equation} \label{eq:gauss1}
    \det D^2 f = K_g(\Phi)(1+|Df|^2)^2 \quad \text{in } U'.
\end{equation}
It is a simple fact that, in this smooth setting, the Cartan equation \eqref{eq:cartan2} and the Gauss equation \eqref{eq:gauss1} are equivalent.

In recent years, there has been a growing interest in the study of isometric embeddings of low regularity. For a non-exhaustive list of references, see, e.g.,  Cao -- Hirsch -- Inauen \cite{cao_hirsch_inauen_Darboux_25}, Cao -- Inauen \cite{cao_inauen_2024}, Cao -- Székelyhidi \cite{cao_sz_isoExt_19, cao_sz_2022, cao_szekelyhidi_25}, Chen -- Li \cite{chen_li_18, chen_li_2021}, Conti -- De Lellis -- Székelyhidi \cite{conti_deLellis_szekelyhidi_12}, De Lellis -- Inauen \cite{deLellis_inauen_20}, De Lellis -- Székelyhidi -- Inauen \cite{dl_sz_in_2018}, De Lellis -- Pakzad \cite{deLellis_pakzad_21}, Giron \cite{giron_21}, Inauen \cite{inauen_19}, Li -- Pakzad -- Schikorra \cite{li_pakzad_schikorra_21}, Liu -- Mal\'{y} \cite{liu_maly}, Müller -- Pakzad \cite{muller_pakzad_05}, Pakzad \cite{pakzad_04, pakzad_24}. These works naturally motivate the following two basic questions:
\begin{enumerate}
    \item Do Cartan's structural equations \eqref{eq:cartan1} and \eqref{eq:cartan2} hold for less regular frames?
    \item Does the Gauss equation \eqref{eq:gauss1} hold for less regular isometric embeddings $u$?
\end{enumerate}
We note that in the works cited above, the Hessian of $f$ does not make sense classically and the connection 1-form $\omega$ was not an object of study except for in \cite{giron_21}. In this paper, we restrict ourselves to the situation where the connection 1-form $\omega$ and the Hessian determinant of $f$ make sense as distributions. 

For both questions, the answer is false if the regularity is allowed to be too low, as simple examples illustrate. Consider the following example for the Cartan structural system. Take $U := B \subset \R^2$ to be the unit ball, with $g = dx_1^2 + dx_2^2 = dr^2 + r^2 d\theta^2$ the Euclidean metric, and the radial frame
\begin{equation*}
    \eta^1 := dr, \quad \eta^2 := rd\theta,
\end{equation*}
which belongs to $W^{1,p} \cap L^\infty(B;T^*\R^2)$ for any $1 \leq p < 2$. There is a unique connection 1-form
\begin{equation*}
    \omega = -d\theta \in L^p(B; T^* \R^2)\quad \forall 1\leq p < 2
\end{equation*}
verifying the first structural equations \eqref{eq:cartan1} in the $L^p$ sense. However, there is a defect in the second structural equation \eqref{eq:cartan2}: in sense of distributions,
\begin{equation*}
    d\omega = -2\pi \delta_0 \neq K_g ~dvol_g = 0.
\end{equation*}
As for the Gauss equation, one may consider, for example, maps of the form
\begin{equation} \label{eq:liu_maly_cone}
    u(r\cos\theta, r\sin\theta) := r\gamma(\theta)
\end{equation}
where $\gamma : [0,2\pi] \to \mathbb{S}^2$ is a smooth closed simple curve parametrised by arc length. Then, $u$ belongs to $W^{2,p} \cap C^{0,1}(B;\R^3)$ for any $1 \leq p < 2$, is smooth away from $0$, and is an isometric embedding of the Euclidean unit disc into $\R^3$. However, $\gamma$ may be chosen such that $u(B)$ is the graph of a $W^{2,p} \cap C^{0,1}$ function $f$ defined on a planar domain and $\Det D^2 f$ is a Dirac mass, and hence the Gauss equation is false -- see Appendix \ref{sec:examples} for the computation. (Similar examples were considered in \cite{liu_maly} for a closely related equation, namely the Darboux equation, which we do not study in this paper.) Therefore, any extension of Cartan's or Gauss's equation must impose minimal but nontrivial Sobolev regularity.

Our first main result, Theorem \ref{thm:cartan_main}, extends the Cartan formalism \eqref{eq:cartan1} - \eqref{eq:cartan2} to frames of regularity $C^0 \cap H^{\frac12}$, a Sobolev threshold already known to be optimal in related Jacobian problems \cite{brezis_nguyen_11}. This extends the $W^{1,p}$ case treated by Giron \cite{giron_21}.

\begin{theorem} \label{thm:cartan_main}
    Let $(\Sigma,g)$ be a compact Riemannian surface and $U \subseteq \Sigma$ be a smooth orientable open subset endowed with volume form $dvol_g$. Suppose that $\{\eta^i\}_{i=1}^2 \subset C^0\cap H^\frac12_{loc}(U;T^*\Sigma)$ is a positively-oriented $g$-orthonormal coframe with dual frame $\{e_i\}_{i=1}^2$. Define the distributional $1$-form $\omega$ by \eqref{eq:conn_form_formula} via Proposition \ref{prop:WeakProducts}. Then, $\omega$ belongs to $H^{-\frac12}_{loc}(U; T^*\Sigma)$, is the unique solution in $H^{-\frac12}_{loc}(U;T^*\Sigma)$ to \eqref{eq:cartan1} in the sense of distributions via Proposition \ref{prop:WeakProducts}, and satisfies \eqref{eq:cartan2} in the sense of distributions:
    \begin{equation*}
        d\omega [\psi] = \omega[d\psi] = \int_{U} K_g \psi ~dvol_g \quad \forall \psi \in C^\infty_c(U).
    \end{equation*}
\end{theorem}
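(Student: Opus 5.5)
We would argue by approximation, reducing everything to the smooth Cartan formalism and passing to the limit using the continuity of the weak products of Proposition \ref{prop:WeakProducts}. The claims are local, so we fix a coordinate chart $V \Subset U$ and a cutoff. In it, write $\eta^1 = \cos\theta\, \hat\eta^1 + \sin\theta\, \hat\eta^2$ and $\eta^2 = -\sin\theta\, \hat\eta^1 + \cos\theta\, \hat\eta^2$, where $\{\hat\eta^i\}$ is a fixed smooth positively oriented orthonormal coframe on $V$ (for instance, Gram--Schmidt applied to $dx_1, dx_2$). The rotation is encoded by $R = e^{i\theta} \in C^0 \cap H^{1/2}(V;\mathbb{S}^1)$. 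Because $R$ is continuous and $V$ can be taken simply connected, $\theta$ exists as a continuous function. We do not, however, expect $\theta \in H^{1/2}$, and the argument should avoid needing it.

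\textbf{Step 1 (formula for $\omega$).} In the smooth case, \eqref{eq:conn_form_formula} gives $\omega = \hat\omega - d\theta$ (up to sign convention), where $\hat\omega$ is the smooth connection form of $\hat\eta$. Equivalently, $\omega = \hat\omega - (R_1\, dR_2 - R_2\, dR_1)$, i.e.\ $\omega = \hat\omega - \langle iR, dR\rangle$. This is an algebraic identity in the components of $R$ and their derivatives, obtained by expanding $d\eta^i$ and contracting with $e_j$, which are linear in $R$ with smooth coefficients.

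For $R \in C^0 \cap H^{1/2}$, each term $R_a\, dR_b$ is exactly the kind of product that Proposition \ref{prop:WeakProducts} defines as an element of $H^{-1/2}$. The proposition also gives continuity along mollifications $R_\varepsilon \to R$ in $H^{1/2}$ with uniform $L^\infty$ bounds. We therefore mollify, renormalise $R_\varepsilon/|R_\varepsilon|$ (legitimate, since $R$ is continuous and uniform convergence keeps $|R_\varepsilon|$ near $1$), and pass to the limit. This shows that $\omega$, defined via \eqref{eq:conn_form_formula} and Proposition \ref{prop:WeakProducts}, equals $\hat\omega - \langle iR, dR\rangle \in H^{-1/2}_{loc}$. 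The same limit transfers \eqref{eq:cartan1} from the smooth frames $\eta_\varepsilon$ to $\eta$.

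\textbf{Step 2 (uniqueness).} Suppose $\omega'$ is another $H^{-1/2}$ solution of \eqref{eq:cartan1}, and set $\alpha = \omega - \omega'$. Then $\eta^2 \wedge \alpha = 0$ and $\eta^1 \wedge \alpha = 0$ as distributions, with the products defined by Proposition \ref{prop:WeakProducts}. Contracting with $e_1, e_2$ should give $\alpha = (\alpha \wedge \eta^2)(e_1,e_2)\,\eta^1 - (\alpha\wedge\eta^1)(e_1,e_2)\,\eta^2 = 0$. Making this rigorous needs the associativity-type identity $C^0\cap H^{1/2}$ function $\times$ ($C^0\cap H^{1/2}$ function $\times$ $H^{-1/2}$) $=$ product of the functions $\times$ $H^{-1/2}$. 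I would verify this by density of smooth forms in the $H^{-1/2}$ slot, if Proposition \ref{prop:WeakProducts} does not already provide it.

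\textbf{Step 3 (second structural equation, the main obstacle).} We have $d\omega = d\hat\omega - d\langle iR, dR\rangle = K_g\, dvol_g - d\langle iR,dR\rangle$, so it suffices to show that the distributional Jacobian $d\langle iR, dR\rangle = 2\, dR_1 \wedge dR_2$ vanishes. For smooth $\mathbb{S}^1$-valued maps, $dR_1\wedge dR_2 = 0$ pointwise, since $dR$ has rank at most $1$. The smooth approximants $R_\varepsilon/|R_\varepsilon|$ are $\mathbb{S}^1$-valued, so $d\langle iR_\varepsilon, dR_\varepsilon\rangle = 0$. By the $H^{1/2}$-continuity of the Jacobian, which is the Brezis--Nguyen fact underlying Proposition \ref{prop:WeakProducts}, this passes to the limit.

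The delicate point is that $R_\varepsilon/|R_\varepsilon| \to R$ in $H^{1/2}$. I would obtain this from uniform convergence of $R_\varepsilon \to R$ (by continuity), which keeps $|R_\varepsilon| \ge 1/2$, combined with $H^{1/2}$ convergence of $R_\varepsilon$, since composition with the smooth map $z\mapsto z/|z|$ on $\{|z|\geq 1/2\}$ is continuous on $H^{1/2}\cap L^\infty$. This is exactly where the continuity assumption is used. It rules out vortices such as the radial frame example, where $d\omega = -2\pi\delta_0$.
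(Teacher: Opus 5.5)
There is a genuine gap in your Step~1, and it comes from a misreading of Proposition~\ref{prop:WeakProducts}. That proposition maps $CH^{\frac12}\times H^{-\frac12}$ into $CH^{-\frac12}$, the dual of $C^0\cap H^{\frac12}_0$, not into $H^{-\frac12}$. In two dimensions $CH^{-\frac12}$ is strictly larger, because $H^{\frac12}\not\subset L^\infty$. Your limit argument therefore only gives $\omega=\hat\omega-(R_1\,dR_2-R_2\,dR_1)$ in $CH^{-\frac12}$, and nothing in the proposal shows $\omega\in H^{-\frac12}_{loc}$. This affects more than the first assertion of the theorem:
\begin{itemize}
\item the product $\eta^2\wedge\omega$ in \eqref{eq:cartan1} is only defined by Proposition~\ref{prop:WeakProducts} when $\omega\in H^{-\frac12}$;
\item transferring \eqref{eq:cartan1} from the smooth frames needs $\omega_{(\epsilon)}\to\omega$ in $H^{-\frac12}$, i.e.\ convergence of $\langle i\tilde R_\epsilon,d\tilde R_\epsilon\rangle$ in $H^{-\frac12}$, which you never establish;
\item ``unique solution in $H^{-\frac12}$'' includes the claim that $\omega$ lies in that class.
\end{itemize}
Any upgrade from $CH^{-\frac12}$ to $H^{-\frac12}$ has to exploit that $R$ is $\mathbb{S}^1$-valued, which in practice means going through a lifting.

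The missing idea is exactly the one you set aside: for continuous $R$, the continuous lifting \emph{does} lie in $H^{\frac12}$. This is Proposition~\ref{prop:lifting}. On the open sets $\{|R-1|>\frac12\}$ and $\{|R+1|>\frac12\}$, $\theta$ is a smooth branch of $\arg R$, hence in $H^{\frac12}$ by Lemma~\ref{lemma:composition_continuity}. The cross terms in the Gagliardo seminorm are harmless: the region where $R$ is near $-1$ and the region where it is near $+1$ are at positive distance, by uniform continuity of $R$. For the local claims it even suffices to lift on small balls where $R$ takes values in an arc. The obstruction you had in mind, vortices as in the radial frame, only occurs for discontinuous maps.

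Once $\theta\in C^0\cap H^{\frac12}$ is known, the rest follows quickly. Your $\langle iR,dR\rangle$ equals $d\theta\in H^{-\frac12}$. Mollifying $\theta$ gives $d\theta_\epsilon\to d\theta$ in $H^{-\frac12}$, so \eqref{eq:cartan1} passes to the limit. Finally \eqref{eq:cartan2} reduces to $d^2=0$. This is the paper's proof, and it makes your Step~3 unnecessary.

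Your uniqueness argument and Step~3 are otherwise sound. Step~3 does not even need Brezis--Nguyen: $\langle i\tilde R_\epsilon,d\tilde R_\epsilon\rangle\to\langle iR,dR\rangle$ in $CH^{-\frac12}$, and $d\psi\in CH^{\frac12}_0$, so you can test against $d\psi$ directly.
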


Before passing onto the next results, we briefly recall the notion of distributional Jacobian and Hessian determinant. A theorem from Brezis -- Nguyen \cite{brezis_nguyen_11} says that, if $U \subset \R^2$ is a bounded Lipschitz domain and $w \in H^\frac12(U;\R^2)$, then $\Det Dw$ is a well-defined distribution. (See also Ball \cite{ball_77}, Morrey \cite{morrey}, and Reshetnyak \cite{reshetnyak67} for earlier work.) As a consequence, for $f \in H^\frac32(U)$, the distributional Hessian $\Det D^2f$ is defined as the distributional Jacobian of $Df$. It is a fact that this definition coincides with
\begin{equation*}
    \Det D^2 f = (f_{x_1}f_{x_2})_{x_1 x_2} - \frac12 (f_{x_1}^2)_{x_2 x_2} - \frac12 (f_{x_2}^2)_{x_1 x_1},
\end{equation*}
which makes sense for even less regular $f$.

The following Corollary \ref{cor:cartan_particular_frame} of Theorem \ref{thm:cartan_main} is a key tool in our passage from the Cartan equation \eqref{eq:cartan2} to the Gauss equation \eqref{eq:gauss1}.
\begin{corollary} \label{cor:cartan_particular_frame}
    Let $U \subset \R^2$ be a domain with a smooth Riemannian metric $g$. Suppose that $u \in C^1 \cap H^{\frac32}_{loc}(U;\R^3)$, is an isometric embedding of $(U,g)$ and that $u(U)$ is parameterised as a graph of $f: U' \to \R$, where $u = (\Psi,v)$ and $\Psi: U \to U'$ is a diffeomorphism with inverse $\Phi$, and $f := v \circ \Phi$. Then, $f$ belongs to $C^1 \cap H^{\frac32}_{loc}(U')$ and satisfies
    \begin{equation} \label{eq:jacobian_intermediate}
        \Det Dw= K_g(\Phi)(1+|Df|^2)^\frac12 \quad \text{in } \mathcal{D}'(U'),
    \end{equation}
    where
    \begin{equation*}
        w := F(Df), \quad F : \R^2 \to \R^2, \quad F(z):= \big((1+|z|^2)^{-\frac12}(1+z_2^2)^{-1} z_1, z_2\big).
    \end{equation*}
\end{corollary}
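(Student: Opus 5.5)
The plan is to build an explicit orthonormal coframe for the graph metric out of $Df$, apply Theorem \ref{thm:cartan_main} to it, and then identify the resulting connection form with $w^1\,dw^2$ modulo a closed form. Write $u\circ\Phi(x)=(x,f(x))$ on $U'$. The pulled-back metric is then $h:=\Phi^{-1*}g=dx_1^2+dx_2^2+df\otimes df$, so $\Phi:(U',h)\to(U,g)$ is an isometry and $K_h=K_g\circ\Phi$. First I will check regularity. Since $u\in C^1\cap H^{3/2}_{loc}$, we have $D\Psi\in C^0\cap H^{1/2}_{loc}$ and $D\Phi=(D\Psi)^{-1}\circ\Phi$. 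Using that $\Phi$ is a $C^1$ diffeomorphism (bi-Lipschitz, so it preserves $H^{1/2}$ under composition) and that $C^0\cap H^{1/2}$ is stable under smooth nonlinear maps, I get $D\Phi\in C^0\cap H^{1/2}_{loc}$. Hence $Df=D(v\circ\Phi)=Dv(\Phi)\,D\Phi\in C^0\cap H^{1/2}_{loc}$, i.e. $f\in C^1\cap H^{3/2}_{loc}(U')$.

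Next I choose a coframe adapted to $x_2$. Set $\eta^2:=(1+f_2^2)^{-1/2}\big(f_1f_2\,dx_1+(1+f_2^2)\,dx_2\big)$, the $h$-dual of the unit vector $\partial_2/|\partial_2|_h$. Then $\eta^1$ is the complementary form with $\eta^1\wedge\eta^2=dvol_h=(1+|Df|^2)^{1/2}dx$, explicitly $\eta^1=(1+|Df|^2)^{1/2}(1+f_2^2)^{-1/2}dx_1$. Both are smooth functions of $Df$, so they lie in $C^0\cap H^{1/2}_{loc}$. To apply Theorem \ref{thm:cartan_main}, I transport this coframe to $U$ by $\Psi$: $\Psi^*\eta^i$ is a $g$-orthonormal coframe on $U$ in $C^0\cap H^{1/2}_{loc}$. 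The theorem then gives a connection form $\tilde\omega$ on $U$ with $d\tilde\omega=K_g\,dvol_g$. Pushing forward by $\Phi$, I get $\omega:=\Phi^*\tilde\omega$, defined by \eqref{eq:conn_form_formula} for $(U',h)$, satisfying $d\omega=K_g(\Phi)(1+|Df|^2)^{1/2}dx$ in $\mathcal D'(U')$.

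The key identity to establish is
$$\omega=-w^1\,dw^2+d\alpha\quad\text{(up to sign and orientation conventions)},$$
with $\alpha$ a continuous function of $Df$, or more generally with a remainder whose distributional $d$ vanishes. Since $\Det Dw=d(w^1dw^2)$ for $w\in C^0\cap H^{1/2}$ in the Brezis--Nguyen sense, this yields \eqref{eq:jacobian_intermediate}. I will first carry out the computation for smooth $f$. There, using $\omega=\iota_{e_2}d\eta^1-\iota_{e_1}d\eta^2$ and the Gauss--Codazzi-free identity $d\eta^1\propto \partial_2\big((1+|Df|^2)^{1/2}(1+f_2^2)^{-1/2}\big)$, one should get $\omega$ as $F$-dependent coefficients times $df_1,df_2$. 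Matching coefficients then forces $w^1=f_1(1+|Df|^2)^{-1/2}(1+f_2^2)^{-1}$ and $w^2=f_2$, consistent with the given $F$. I then pass to $f\in C^1\cap H^{3/2}_{loc}$ by mollification: $Df_\varepsilon\to Df$ in $C^0\cap H^{1/2}_{loc}$. Both sides of the identity are weak products (in the sense of Proposition \ref{prop:WeakProducts}) of $C^0\cap H^{1/2}$ quantities depending smoothly on $Df$, and these are continuous under this convergence, so the identity persists.

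I expect the main obstacle to be the transport step: verifying that the distributional quantities of Proposition \ref{prop:WeakProducts} (the weak interior products and the $H^{-1/2}$ connection form) commute with pullback by the merely $C^1\cap H^{3/2}$ diffeomorphism $\Psi$. My first attempt will be the same approximation: mollify $u$, noting that the smoothed $\Psi_\varepsilon$ are still diffeomorphisms locally for small $\varepsilon$ because $\Psi$ is $C^1$. For smooth maps naturality of $d$ and interior products is classical, and I will pass to the limit using $H^{1/2}$ continuity of composition with converging $C^1$ bi-Lipschitz maps. Alternatively, since $\Phi$ is an isometry, one could reprove the relevant case of Theorem \ref{thm:cartan_main} directly for the $C^0\cap H^{1/2}$ metric $h$ on $U'$, but the transport route seems cleaner.
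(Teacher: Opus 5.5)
Your plan is correct and follows the paper's proof essentially step for step. It uses the same Gram--Schmidt coframe and transports it to $(U,g)$ to invoke Theorem \ref{thm:cartan_main} together with a pullback-commutation step; this is the paper's Lemma \ref{lemma:pullback_commutation} and Proposition \ref{prop:pullback_Cartan}, except that the paper mollifies the forms with $\Phi$ fixed rather than mollifying $\Psi$. It then does a smooth computation and passes to the limit via Proposition \ref{prop:WeakProducts} and Brezis--Nguyen continuity, as you propose. For the record, with the convention $\omega=\iota_{e_2}d\eta^1-\iota_{e_1}d\eta^2$ the smooth computation gives exactly $\omega=w^1\,dw^2$ with $w=F(Df)$, so there is no minus sign and no exact remainder $d\alpha$ to track.
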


Equation \eqref{eq:jacobian_intermediate} is Cartan's second structural equation for a particular frame on $(U', g')$ with $g' = \Phi^*g$, but because $g'$ is not smooth we must pull back onto $(U,g)$ to recover \eqref{eq:jacobian_intermediate} from Theorem \ref{thm:cartan_main}. For a similar result for the Darboux equation, see the very recent preprint Cao -- Hirsch -- Inauen \cite{cao_hirsch_inauen_Darboux_25}.

Based on Corollary \ref{cor:cartan_particular_frame} we obtain our second main result, concerning the validity of the Gauss equation under a stronger regularity assumption. For the definitions of the spaces $c^{1,\frac12}$ and $BV^2$, see \S \ref{sec:func_spaces}.

\begin{theorem} \label{thm:Gauss_23}
    Let $U \subset \R^2$ be a domain with a smooth Riemannian metric $g$. Suppose that $u \in C^1(U;\R^3)$ is an isometric embedding of $(U,g)$ and that $u(U)$ is parametrised as a graph of $f: U' \to \R$, where $u = (\Psi,v)$ with $\Psi: U \to U'$ a diffeomorphism with inverse $\Phi$, and $f := v \circ \Phi$.
    \begin{enumerate}[(i)]
        \item \label{case:GaussBV} If $u \in c^{1,\frac12}_{loc} \cap BV^2_{loc}(U;\R^3)$, then $f \in c^{1,\frac12}_{loc} \cap BV^2_{loc}(U')$ and $f$ satisfies the Gauss equation in the sense of distributions, i.e.,
        \begin{equation} \label{eq:gauss2}
            \Det D^2 f = K_g(\Phi)(1+|Df|^2)^2 \quad \text{in } \mathcal{D}'(U').
        \end{equation}
        \item \label{case:Gauss23} Likewise, if $u \in C^1 \cap W^{1+\frac23,3}_{loc}(U;\R^3)$, then $f \in C^1 \cap W^{1+\frac23,3}_{loc}(U')$ and \eqref{eq:gauss2} holds.
    \end{enumerate}
\end{theorem}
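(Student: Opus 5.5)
The plan is to deduce \eqref{eq:gauss2} from Corollary \ref{cor:cartan_particular_frame} through a chain rule for distributional Jacobians. Both regularity classes embed into $C^1\cap H^{\frac32}_{loc}$. For (ii) this follows from the embedding $W^{1+\frac23,3}\subset H^{\frac32}$ on bounded domains. For (i) the $c^{1,\frac12}$ modulus of $Du$ together with $D^2u\in BV$ (a finite measure) gives $Du\in H^{\frac12}_{loc}$, via interpolation of $L^\infty$-type and $BV$ Besov norms. So Corollary \ref{cor:cartan_particular_frame} applies and gives
\[
\Det D F(Df) = K_g(\Phi)(1+|Df|^2)^{\frac12}.
\]

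First I would transfer the regularity from $u$ to $f$. We have $Df = (Dv)(D\Psi)^{-1}\circ\Phi$, which is a smooth nonlinear function of $Du\circ\Phi$. Moreover $\Phi$ is $C^1$, with $D\Phi$ in the same class as $Du$. Composition with such bi-Lipschitz $C^1$ maps, and with smooth functions, preserves $c^{0,\frac12}\cap BV$ and $C^0\cap W^{\frac23,3}$ locally. Hence $f$ lies in the stated class.

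Second, I would compute the Jacobian of $F$:
\[
\det DF(z) = (1+|z|^2)^{-\frac32}.
\]
Indeed, the second component is $z_2$, so $\det DF(z)=\partial_{z_1}F^1$. A short computation gives $\partial_{z_1}F^1=(1+z_2^2)^{-1}(1+|z|^2)^{-\frac32}(1+|z|^2-z_1^2)=(1+|z|^2)^{-\frac32}$. The smooth case suggests the chain rule
\[
\Det D(F\circ w) = (\det DF)(w)\,\Det Dw \quad\text{for } w=Df.
\]
Granting it, $(1+|Df|^2)^{-\frac32}\Det D^2f = K_g(\Phi)(1+|Df|^2)^{\frac12}$. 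Since $Df$ is continuous, $\Det D^2 f$ is then a continuous function, and multiplying by the continuous positive factor $(1+|Df|^2)^{\frac32}$ gives \eqref{eq:gauss2}.

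The main obstacle is justifying this chain rule at the level of distributions. In general $\Det Dw$ is only a distribution, so the product $(\det DF)(w)\,\Det Dw$ is not obviously meaningful. The regularity assumptions are exactly what make it work. I would mollify: take $w_\epsilon = D f_\epsilon$, for which the identity holds classically. I then need to pass to the limit on both sides.

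The left-hand side converges in $\mathcal D'$ by the continuity of $\Det$ in $H^{\frac12}$ (Brezis--Nguyen), since $F$ is smooth and Lipschitz on the bounded range of $w$. On the right, I would test against $\varphi$ and write
\[
\int \varphi\, G(w_\epsilon)\det Dw_\epsilon = \int \psi_\epsilon \det Dw_\epsilon, \qquad \psi_\epsilon := \varphi\, G(w_\epsilon) \to \psi := \varphi\, G(w) \text{ uniformly}.
\]
This reduces the problem to showing that $\Det Dw_\epsilon$ is bounded in the dual of a space containing $\psi$, and that $\psi_\epsilon\to\psi$ in that space.

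In case (ii), I would use the trilinear estimate
\[
\Bigl|\int \psi\det Dw\Bigr|\lesssim \|\psi\|_{W^{\frac23,3}}\|w\|_{W^{\frac23,3}}^2,
\]
obtained by extension to $\R^3$ and Hölder. Here $\psi\in W^{\frac23,3}$ because $G$ is smooth and $w\in C^0\cap W^{\frac23,3}$, and $\psi_\epsilon\to\psi$ in $W^{\frac23,3}$. Equivalently, the Jacobian of $(w,\psi)$-type triple forms is continuous in $W^{\frac23,3}$.

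In case (i), I would use the $BV$ bound. $\Det Dw_\epsilon$ can be written as $\partial_1(w^1_\epsilon\partial_2 w^2_\epsilon)-\partial_2(w^1_\epsilon\partial_1 w^2_\epsilon)$, with $Dw_\epsilon$ bounded in $L^1$. Combined with the little-Hölder $c^{0,\frac12}$ continuity, this lets one estimate $\int (\psi_\epsilon-\psi)\det Dw_\epsilon$ via a commutator/Hölder argument of the type $|Dw_\epsilon|\cdot\epsilon^{\frac12}\cdot o(\epsilon^{-\frac12})\to 0$. This little-$o$ bookkeeping is where I expect the real difficulty to lie.
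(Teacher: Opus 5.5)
Your case (ii) is essentially the paper's argument: Corollary \ref{cor:cartan_particular_frame} followed by duality in $W^{\frac23,3}$. One step needs tightening. The final ``division'' by $(1+|Df|^2)^{-\frac32}$ requires the pairing to be extended by density to test functions in $W^{\frac23,3}_0\cap C^0_c$. The paper avoids this by pairing $\det D(F(Df_\epsilon))$ directly with $(1+|Df_\epsilon|^2)^{\frac32}\psi$.

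Case (i), however, has a genuine gap, and it sits where you expected. Your bound $\|\psi_\epsilon-\psi\|_{\infty}\,\|Dw_\epsilon\|_{\infty}\,\|Dw_\epsilon\|_{L^1}=o(\epsilon^{\frac12})\,o(\epsilon^{-\frac12})\,O(1)$ is correct, but it only lets you trade $\psi_\epsilon$ for $\psi$. What it yields is
$\lim_{\epsilon}\int\varphi\,(1+|Df|^2)^{-\frac32}\det D^2 f_\epsilon=\int\varphi\,K_g(\Phi)(1+|Df|^2)^{\frac12}$ for smooth $\varphi$.
To obtain $\Det D^2 f[\chi]$ you must take $\varphi=\chi(1+|Df|^2)^{\frac32}$, which is only $c^{0,\frac12}\cap BV$. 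That needs either an $\epsilon$-uniform bound on $\det D^2 f_\epsilon$ in the dual of a space containing such functions, or convergence of $\int\psi\det D^2 f_\epsilon$ for a fixed rough $\psi$. Neither follows from what you list:
the divergence form controls $\det D^2 f_\epsilon$ uniformly only against Lipschitz test functions;
$\|\det D^2 f_\epsilon\|_{L^1}$ is only known to be $o(\epsilon^{-\frac12})$;
and $c^{0,\frac12}\cap BV\not\subset W^{\frac23,3}$, so the trilinear estimate of case (ii) is unavailable.
The paper says explicitly that it does not know how to run a duality argument in this case. A diagonal choice $\varphi_\epsilon=\chi(1+|Df_\epsilon|^2)^{\frac32}$ does not help; it just moves the problem to pairing $\det D(F(Df_\epsilon))$ with a rough function.

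The missing idea is to reverse the chain rule so that the \emph{inner} map is the one whose Jacobian is already known. The paper sets $w:=F(Df)$, whose Jacobian $\Det Dw=K_g(\Phi)(1+|Df|^2)^{\frac12}$ is continuous. It then applies Proposition \ref{prop:BV_det_chain} with outer map $F^{-1}$, modified off the compact range of $w$. That proposition states $\Det D(H(w))=\det DH(w)\,\Det Dw$ whenever $w\in c^{0,\frac12}_{loc}\cap BV_{loc}$ and $\Det Dw\in L^1_{loc}$. The right-hand side is a product of functions, so no division by a distribution is ever needed. Its proof first uses Lemma \ref{lemma:DetFormulaBV} and the $BV$ chain rule to reduce to Jacobians $\Det D(\zeta(w),w^j)$. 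It then relies on two ingredients your sketch lacks:
\begin{itemize}
\item The diagonal cancellation estimate of Lemma \ref{lemma:cancellation}: $\bigl|\int h\,D^\perp v_\epsilon\cdot d\mu\bigr|\le C\,[h]_{0,\frac12;\Omega|\epsilon}[v]_{0,\frac12;\Omega|\epsilon}|\mu|(\Omega)$ with $\mu=Dv$. This is where the little-H\"older $o(1)$ genuinely enters, and it kills the terms pairing $D^\perp w^j_\epsilon$ with $Dw^j$.
\item The hypothesis $\Det Dw\in L^1_{loc}$, which handles the off-diagonal terms. It allows $\Det Dw$ to be tested against functions $\psi\,\partial_{z_2}\zeta(w_\epsilon)$ that converge only uniformly.
\end{itemize}
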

Note that if $u$ is as in case \eqref{case:GaussBV}, then $u$ belongs to $C^1 \cap W^{1+\frac23,p}_{loc}(U;\R^3)$ for any $p < 3$ by interpolation. However, we note that $c^{1,\frac12} \cap BV^2$ does not embed into $W^{1+\frac23,3}$ in two dimensions; see Brezis -- Mironescu \cite[Theorem 1]{brezis_mironescu_18}.\footnote{The statement of \cite[Theorem 1]{brezis_mironescu_18} appears to give only a function $f \in (C^{1,\frac12} \cap W^{2,1}) \setminus W^{1+\frac23,3}$, however their construction in fact gives $f \in c^{1,\frac12}$.}

Combining Theorem \ref{thm:Gauss_23} with a $\det = \Det$ result of Fonseca and Mal\'{y} \cite[Theorem 1.4]{fonseca_maly} (see also Müller \cite{mueller_det}), we deduce the following.
\begin{corollary} \label{cor:Gauss_ae}
    Let $u$ be as in Theorem \ref{thm:Gauss_23}, case \eqref{case:GaussBV}. Denote by $\nabla^2f$ the absolutely continuous part of the Radon measure $D^2f$ with respect to 2-dimensional Lebesgue measure. Then
    \begin{equation*}
        \det(\nabla^2 f(x)) = K_g(\Phi(x))(1+|Df(x)|^2)^2 \quad \text{for}\quad \mathcal{L}^2 \text{-}a.e. ~ x \in U'.
    \end{equation*}
\end{corollary}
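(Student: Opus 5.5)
We obtain Corollary \ref{cor:Gauss_ae} by combining Theorem \ref{thm:Gauss_23}\eqref{case:GaussBV} with a $\det = \Det$ statement for $BV^2$ functions. By Theorem \ref{thm:Gauss_23}\eqref{case:GaussBV}, $f \in c^{1,\frac12}_{loc} \cap BV^2_{loc}(U')$, and the Gauss equation \eqref{eq:gauss2} holds in $\mathcal{D}'(U')$. The right-hand side $K_g(\Phi)(1+|Df|^2)^2$ is a continuous function, since $f \in C^1$, $\Phi$ is a diffeomorphism and $K_g$ is smooth. Hence the distribution $\Det D^2 f$ is represented by a locally integrable (indeed continuous) function. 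In particular, it is a Radon measure that is absolutely continuous with respect to $\mathcal{L}^2$, with density $K_g(\Phi)(1+|Df|^2)^2$.

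The second step is to invoke Fonseca--Mal\'{y} \cite[Theorem 1.4]{fonseca_maly}, applied to the map $w := Df \in BV_{loc}(U';\R^2)$, which is also continuous. That result, in the form we use, says: if $w \in BV_{loc}$ and the distributional Jacobian $\Det Dw$ is a Radon measure, then the absolutely continuous part of $\Det Dw$ equals $\det(\nabla w)\,\mathcal{L}^2$, where $\nabla w$ is the absolutely continuous part of $Dw$. The additional integrability hypothesis needed to make $\Det Dw$ well defined (for instance $w \in L^\infty_{loc}$, or $w$ in a suitable $L^p$ class) holds trivially because $w$ is continuous. Here $\nabla w = \nabla^2 f$ and $\Det Dw = \Det D^2 f$, since $\Det D^2 f$ is by definition the distributional Jacobian of $Df$. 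Because $\Det D^2 f$ has no singular part, we obtain
\begin{equation*}
\det(\nabla^2 f)\,\mathcal{L}^2 = (\Det D^2 f)^{ac} = K_g(\Phi)(1+|Df|^2)^2\,\mathcal{L}^2,
\end{equation*}
and therefore $\det(\nabla^2 f) = K_g(\Phi)(1+|Df|^2)^2$ at $\mathcal{L}^2$-a.e.\ point of $U'$.

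The only real obstacle is to check that the hypotheses of \cite[Theorem 1.4]{fonseca_maly} match our setting. Specifically, we must confirm that their notion of distributional determinant agrees with the one used here. This amounts to the divergence form $\Det Dw = \partial_1(w^1 \partial_2 w^2) - \partial_2(w^1 \partial_1 w^2)$, which is meaningful for $w \in BV \cap L^\infty_{loc}$ and coincides with the expression for $\Det D^2 f$ recalled in the introduction. We must also confirm that their theorem requires only that $\Det Dw$ be a measure, with no sign condition. If their statement is phrased globally, on bounded domains or for $W^{1,1}$ maps, we would apply it on balls $B \Subset U'$ and use that the a.e.\ identity is local. Since $f \in C^1$, every needed integrability condition holds, so no further analysis is required.
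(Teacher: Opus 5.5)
Your proposal is correct and follows the paper's route. Theorem \ref{thm:Gauss_23}\eqref{case:GaussBV} makes $\Det D^2 f$ the absolutely continuous measure with continuous density $K_g(\Phi)(1+|Df|^2)^2$, and the Fonseca--Mal\'{y} $\det=\Det$ result \cite[Theorem 1.4]{fonseca_maly} identifies its absolutely continuous part with $\det\nabla^2 f$; the paper gives no further argument. The compatibility of the two notions of distributional determinant that you flag for continuous $BV$ maps is exactly the content of Lemma \ref{lemma:DetFormulaBV}.
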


To place our work in context, let us discuss relevant literature in the setting of low Sobolev regularity. Regarding the validity of the Cartan structural system, the only earlier work is \cite{giron_21} where the frames are in $W^{1,p}$. It was shown that the Cartan structural equations are correct modulo possibly nontrivial defect measures, which vanish if the frame is also continuous. 

The Gauss equation in low regularity has received slightly more attention. First, suppose that $u \in W^{2,2}(U; \R^3)$. In this case, the second fundamental form of $u$ belongs to $L^2$ and the Gauss equation \eqref{eq:gauss1} holds in the $L^1$ sense, following from a simple approximation by smooth maps (see, e.g., \cite[Lemma 2.2]{hornung_18}). We point out that in the discussion at hand, we already assumed a local graphical representation of the embedding $u$. The existence of such graphical representation in $W^{2,2}$ regularity is a more subtle matter; see Hornung -- Vel\v{c}i\'{c} \cite{hornung_18}, and also Toro \cite{toro_94} and Müller -- \v{S}ver\'{a}k \cite{mueller_sverak_95}.

For isometric embeddings less than twice differentiable, two important works precursory to our Theorem \ref{thm:Gauss_23} are Conti -- De Lellis -- Székelyhidi \cite{conti_deLellis_szekelyhidi_12} and Pakzad \cite{pakzad_24}. If $\alpha > \frac23$, $(M,g)$ is a smooth surface, and $u \in C^{1,\alpha}(M;\R^3)$ an isometric embedding with Gauss map $\nu \in C^{0,\alpha}(M;\mathbb{S}^2)$, then in \cite[Proposition 6]{conti_deLellis_szekelyhidi_12} it is shown that
\begin{equation} \label{eq:delellis_gauss}
        \int_V \phi(\nu(x)) K_g(x)~dvol_g(x) = \int_{\mathbb{S}^2} \phi(y) \operatorname{deg}(\nu, V, y) ~d\sigma(y)
\end{equation}
for all $V \subset \subset M$ and $\phi \in L^\infty(\mathbb{S}^2)$ with $\operatorname{supp}\phi \subset \mathbb{S}^2\setminus \nu(\partial V)$. The formula \eqref{eq:delellis_gauss} says that, in a weak sense, $K_g$ is the Jacobian of the Gauss map. Equation \eqref{eq:delellis_gauss} is closely related to the Gauss equation \eqref{eq:gauss2} but we do not know whether they are equivalent. (However, see Proposition \ref{prop:CdLS_vs_Gauss} where we prove that \eqref{eq:delellis_gauss} holds under the hypothesis of case \eqref{case:Gauss23} of Theorem \ref{thm:Gauss_23}.) 

In $C^{1,\alpha}$ regularity with $\alpha > \frac23$, we note that the Gauss equation \eqref{eq:gauss2} can be deduced from the block of unnumbered equations at the end of the proof of Proposition 4.9 in the recent work of Pakzad \cite{pakzad_24}. However, our method differs from theirs -- those authors did not consider the Cartan structural equations, while we obtain the Gauss equation as a consequence thereof.

Now we turn to a discussion of the proof of Theorems \ref{thm:cartan_main} and \ref{thm:Gauss_23}. Our proof of Theorem \ref{thm:cartan_main} is based on the lifting property of continuous $\mathbb{S}^1$-valued maps, exploiting the 2-dimensional nature of our problem. Fix any smooth orthonormal coframe $\{\hat\eta^i\}_{i=1}^2$, with connection $1$-form $\hat\omega$. By the aforementioned lifting property, there exists $\theta \in H^{\frac12}(U) \cap C^0(\overline U)$ such that
\begin{align*}
    \eta^1 &= \cos\theta~ \hat\eta^1 + \sin\theta~ \hat\eta^2,\\
    \eta^2 &= -\sin\theta~\hat\eta^1 + \cos\theta~ \hat\eta^2.
\end{align*}
Let $\theta_\epsilon$ be a smooth approximation of $\theta$, and define $\eta^i_{(\epsilon)}$ like $\eta^i$ but with $\theta_\epsilon$ replacing $\theta$. Then, the smooth frames $\eta^i_{(\epsilon)}$ satisfy Cartan's structural equations with connection 1-form $\omega_{(\epsilon)} = \hat{\omega} - d\theta_\epsilon$. We pass to the limit $\epsilon \searrow 0$ via Proposition \ref{prop:WeakProducts} in the structural equations. In particular, $\omega = \hat\omega - d\theta$, so that the second structural equation follows from the identity $d^2 = 0$.

We comment next on the passage from Corollary \ref{cor:cartan_particular_frame} to Theorem \ref{thm:Gauss_23} and the jump in the regularity requirement for the embedding map $u$. We point out that Corollary \ref{cor:cartan_particular_frame} requires that $Du$ is fractionally differentiable of order $\frac12$, while Theorem \ref{thm:Gauss_23} requires fractional differentiability of order at least $\frac23$. If $f$ were smooth, then with $w = F(Df)$ as in Corollary \ref{cor:cartan_particular_frame}, there holds
\begin{equation} \label{eq:detDw_from_f}
    \det Dw = \det DF(Df) \det D^2f = (1+|Df|^2)^{-\frac32} \det D^2 f,
\end{equation}
and therefore, the Gauss equation \eqref{eq:gauss2} follows by multiplying \eqref{eq:jacobian_intermediate} by $(1+|Df|^2)^{\frac32}$. In Theorem \ref{thm:Gauss_23}, $f$ is less regular. When $Du$ is fractionally differentiable of order $\frac12$, we do not know yet how to justify this multiplication. A version of the chain rule for the Jacobian determinant in this regularity is available in Gladbach -- Olbermann \cite{gladbach_olbermann}, but as far as we can see, it appears unsuitable for our application at hand. Observe that, by standard Jacobian estimates, if $f \in W^{1+s,p} \cap C^1$, then $\Det D^2 f$ belongs to $(W^{2(1-s),\frac{p}{p-2}}_0)^*$ while $(1+|Df|^2)^{-\frac32}$ belongs to $W^{s,p}$. Hence, a natural assumption to justify the above multiplication procedure is that  $s = \frac23$ and $p=3$ as in case (ii) of Theorem \ref{thm:Gauss_23}. In case (i) it is not clear to us how to make such a duality argument, so we prove the Gauss equation from \eqref{eq:jacobian_intermediate} by a different approach. We prove a chain rule for the distributional Jacobian determinant, Lemma \ref{prop:BV_det_chain}, that suffices in our regularity class to recover the Gauss equation from \eqref{eq:jacobian_intermediate}. 

Regardless of whether or not the above multiplication procedure can be made sense of, it would be interesting to determine for which $s \in (0,1)$ and $p \in [1,\infty]$ the Gauss equation \eqref{eq:gauss2} holds for all $C^1$ isometric immersions $u \in W^{1+s,p}(U;\R^3)$. By Theorem \ref{thm:Gauss_23} and Sobolev embeddings, \eqref{eq:gauss2} is valid if $s=\frac23,p=3$ or $s > \frac23, p \geq \frac2s$. We do not yet know how to treat the cases $s=\frac23, p \neq 3$; $s> \frac23, p < \frac2s$; or $s < \frac23$. However, we give the following result on removability of small singular sets.

\begin{theorem} \label{thm:capacity}
Let $U \subset \R^2$ be a domain equipped with a smooth Riemannian metric $g$. Suppose that $u\in C^1 \cap W^{1+s,p}_{loc}(U;\R^3)$ is an isometric embedding of $g$ where $s \in (\frac12,1)$ and $p \geq \frac2s$, such that the embedded surface $u(U)$ is the graph of $f : U' \subset \R^2 \to \R^3$ as in Theorem \ref{thm:Gauss_23}. Let $s_0 = 2(1-s)$, $p_0 = \frac{p}{p-2}$ and suppose that there exists a compact set $S \subset U$ of $(s_0,p_0)$-capacity zero (see \eqref{eq:def_cap_wsp}) such that $u \in W^{1+\frac23,3}_{{loc}}(U \setminus S; \R^3)$. Then, the Gauss equation \eqref{eq:gauss2} holds.
\end{theorem}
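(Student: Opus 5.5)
Throughout, $S' := \Psi(S)$. Since $\Psi$ is a $C^1$ diffeomorphism (indeed $W^{1+s,p}$ on compacts), $S'$ is compact in $U'$ and still has $(s_0,p_0)$-capacity zero; this invariance of capacity under the bi-Lipschitz map $\Psi$ is a step I will verify. Likewise $f \in C^1 \cap W^{1+s,p}_{loc}(U')$, and $f \in W^{1+\frac23,3}_{loc}(U'\setminus S')$, by the same composition argument used for Theorem \ref{thm:Gauss_23}.

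\textbf{Step 1: the equation away from $S'$.} Apply Theorem \ref{thm:Gauss_23}\eqref{case:Gauss23} on $U\setminus S$, with the restricted graph representation. This gives
\[
T := \Det D^2 f - K_g(\Phi)(1+|Df|^2)^2 = 0 \quad \text{in } \mathcal{D}'(U'\setminus S'),
\]
so $T$ is a distribution supported in $S'$.

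\textbf{Step 2: $T$ extends continuously to the capacity space.} Use the standard Jacobian estimate already quoted in the introduction. Since $Df \in W^{s,p}\cap C^0$, the distribution $\Det D^2 f$ extends to a bounded functional on $W^{s_0,p_0}_0(V)$ for each $V\subset\subset U'$. Concretely, write $\Det D^2 f[\psi]$ as a trilinear form in $(Df, Df, \psi)$ and apply fractional integration by parts / commutator estimates, with exponents $\tfrac1p+\tfrac1p+\tfrac1{p_0}=1$ and $s+s+s_0=2$. The right-hand side is continuous, hence lies in $L^\infty$, which also pairs boundedly with $W^{s_0,p_0}$. So $T \in (W^{s_0,p_0}_0(V))^*$. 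The exact Jacobian bound is the one place where a precise reference is needed; the hypothesis $s>\frac12$ ensures $s_0<1$, so the dual space is the natural one.

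\textbf{Step 3: removability.} Fix $\psi \in C_c^\infty(U')$ and $\epsilon>0$. Since $\operatorname{cap}_{s_0,p_0}(S')=0$, there is $\chi_\epsilon \in C_c^\infty(V)$ with $\chi_\epsilon = 1$ near $S'$ and $\|\chi_\epsilon\|_{W^{s_0,p_0}}<\epsilon$. By the definition \eqref{eq:def_cap_wsp}, this may require a smoothing or truncation step to obtain $\chi_\epsilon\equiv1$ on a neighborhood. Then $\psi(1-\chi_\epsilon)$ is supported away from $S'$, so
\[
T[\psi] = T[\psi\chi_\epsilon], \qquad |T[\psi\chi_\epsilon]| \le \|T\|_{(W^{s_0,p_0}_0)^*}\,\|\psi\chi_\epsilon\|_{W^{s_0,p_0}} \le C_\psi\,\epsilon .
\]
The last bound uses that multiplication by a smooth $\psi$ is bounded on $W^{s_0,p_0}$. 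Letting $\epsilon\to0$ gives $T=0$, which is \eqref{eq:gauss2}.

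\textbf{Main obstacle.} The hardest step is Step 2: justifying the duality bound for $\Det D^2 f$ on $W^{s_0,p_0}_0$ in the full range $s\in(\frac12,1)$, $p\ge \frac2s$. My first attempt is the trace/harmonic-extension approach of Brezis--Nguyen: extend $Df$ and $\psi$ to $V\times(0,1)$, write the pairing as $\int d\tilde\psi\wedge d\tilde w^1\wedge d\tilde w^2$, and use Hölder with the Gagliardo-type norms of the extensions. If that fails, I will fall back on a Littlewood--Paley paraproduct estimate.
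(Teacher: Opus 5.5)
Your proposal is correct and follows essentially the same route as the paper. The shared steps are: the Gauss equation on $U'\setminus S'$ from Theorem~\ref{thm:Gauss_23}, case (ii); transfer of zero $(s_0,p_0)$-capacity from $S$ to $S'=\Psi(S)$ under the locally bi-Lipschitz map $\Psi$; the bound $\Det D^2 f\in (W^{s_0,p_0}_0)^*$ from the standard Jacobian estimate with $2s+s_0=2$ and $\frac2p+\frac1{p_0}=1$; and cutoffs $\chi_k\equiv 1$ near $S'$ with $\chi_k\to 0$ in $W^{s_0,p_0}$.

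The differences are cosmetic. The paper simply cites the Jacobian estimate (Gladbach--Olbermann) rather than reproving it by harmonic extension. It also quotes the literature for cutoffs equal to $1$ on a neighbourhood of $S'$, a step you flag that is indeed an elementary Lipschitz truncation.
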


Note that, since $sp \geq 2$, $s_0p_0 \leq 2$. In particular, since $S$ has $(s_0,p_0)$-capacity zero, the Hausdorff dimension of $S$ is at most $2 - s_0p_0$. (Note also that  if a set has finite $\mathcal{H}^{2-s_0p_0}$ measure, then it has $(s_0,p_0)$-capacity zero.)

Finally, we discuss two applications of Theorem \ref{thm:Gauss_23}. We recall that a celebrated theorem of Cohn-Vossen \cite{cohn_vossen} asserts that any $C^2$ isometric embedding into $\R^3$ of a given two-sphere with positive Gaussian curvature is convex and unique up to rigid motion -- see also Herglotz \cite{herglotz}. (Such rigidity stands in contrast to the Nash-Kuiper theorem \cite{nash_54, kuiper_55}.) Borisov \cite{borisov_I, borisov_II, borisov_59, borisov_III, borisov_60} extended Cohn-Vossen's theorem to $C^{1,\alpha}(\mathbb{S}^2;\R^3)$ isometric embeddings with $\alpha > \frac23$. A different proof of Borisov's result, based on \eqref{eq:delellis_gauss}, was later given by Conti -- De Lellis -- Székelyhidi \cite{conti_deLellis_szekelyhidi_12}. Subsequently, Pakzad \cite{pakzad_24} extended the result to include metrics of nonnegative curvature. By combining Theorem \ref{thm:Gauss_23} with the arguments in \cite{conti_deLellis_szekelyhidi_12, pakzad_24} we obtain the following extensions.

\begin{theorem} \label{thm:rigidity_closed}
    Let $g$ be a smooth metric on $\mathbb{S}^2$ with Gaussian curvature $K_g \geq 0$. If $u \in C^1 \cap W^{1+\frac23,3}(\mathbb{S}^2;\R^3)$ is an isometric embedding of $g$, then the image surface $\Sigma := u(\mathbb{S}^2)$ is a closed convex surface. Moreover, any two such images are congruent in $\R^3$. Finally, if $K_g > 0$ then $\Sigma$ is smooth.
\end{theorem}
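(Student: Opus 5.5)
The plan is to follow the route of Conti -- De Lellis -- Székelyhidi \cite{conti_deLellis_szekelyhidi_12} and Pakzad \cite{pakzad_24}. Their arguments use the $C^{1,\alpha}$, $\alpha>\frac23$ hypothesis essentially in two places: to obtain the weak Gauss equation, or equivalently the degree formula \eqref{eq:delellis_gauss}, and in local graph representations. We replace this input by Theorem \ref{thm:Gauss_23}(ii) and by Proposition \ref{prop:CdLS_vs_Gauss}. Since $u\in C^1$ is an embedding of $\mathbb{S}^2$, near every point $u(\mathbb{S}^2)$ is locally a graph over its tangent plane. The graph function $f$ is $C^1\cap W^{1+\frac23,3}_{loc}$, so Theorem \ref{thm:Gauss_23}(ii) gives
\[
\Det D^2 f = K_g(\Phi)(1+|Df|^2)^2\ \ge 0 \quad \text{in } \mathcal{D}'.
\]

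\textbf{Convexity.} A nonnegative distributional Hessian determinant should force local convexity. The first thing to try is the degree formula: with $K_g\ge0$, \eqref{eq:delellis_gauss} shows that $\deg(\nu,V,y)\ge 0$ for a.e.\ $y$ and every $V$. Adapting Pakzad's argument \cite{pakzad_24}, the Gauss map $\nu$ is then a continuous map of degree $1$ on $\mathbb{S}^2$ with $\int_{\mathbb{S}^2}K_g\,dvol_g=4\pi$ by Gauss--Bonnet. Nonnegativity of the local degrees shows that $\nu$ covers the sphere essentially once with no folding. From this one deduces that every tangent plane is locally supporting: $f$ is convex or concave near each point, because a $C^1$ function whose gradient map has nonnegative local degree and area equal to total curvature is locally convex. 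Local convexity of a closed embedded $C^1$ surface then gives global convexity by the Hadamard/Tietze type argument already used in \cite{conti_deLellis_szekelyhidi_12}.

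\textbf{Uniqueness and smoothness.} Once $\Sigma$ is a closed convex surface, it is an Alexandrov convex surface with intrinsic metric induced by $g$. Pogorelov's rigidity theorem for general closed convex surfaces then gives uniqueness up to congruence. If $K_g>0$, the graph function is convex and its Monge--Ampère measure $\Det D^2 f$ equals the smooth positive density $K_g(\Phi)(1+|Df|^2)^2$. Here we must check that for convex $f$ the distributional $\Det D^2 f$ agrees with the Alexandrov Monge--Ampère measure, which is standard for convex $W^{1+\frac23,3}$ functions by approximation. We then want to apply Caffarelli's regularity theory to upgrade $f$ to $C^{1,\beta}$ and, after that, to $C^\infty$. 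Caffarelli's theory needs strict convexity, and since $K_g>0$ this follows from Alexandrov: a closed convex surface with positive curvature measure contains no segments. There is a subtlety because the right-hand side depends on $\Phi$ and $Df$. The $C^{1,\beta}$ bound makes it Hölder, Schauder bootstrapping then gives smoothness of $f$, and hence of $\Sigma$. Alternatively one can cite Pogorelov's regularity theorem directly.

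\textbf{Main obstacle.} The hardest step is convexity, i.e.\ turning the distributional inequality $\Det D^2 f\ge 0$ (or nonnegative degree) into pointwise local convexity when $K_g$ may vanish. I would try to reproduce Pakzad's argument \cite{pakzad_24} verbatim, checking that it only uses \eqref{eq:delellis_gauss}, which Proposition \ref{prop:CdLS_vs_Gauss} supplies in our regularity.
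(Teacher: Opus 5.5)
There is a genuine gap at the convexity step, which you correctly flag as the crux, and the route you sketch for it cannot be completed. Your other steps are sound: local graphs with Theorem \ref{thm:Gauss_23}(ii) giving $\Det D^2 f\ge 0$, Pogorelov's rigidity for congruence, and Monge--Amp\`ere regularity once convexity is known.

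\textbf{Why your convexity route fails.} You try to show that each graph function is locally convex and then globalise. The local claim is false wherever $K_g$ vanishes. Take $f(x)=x_1^3$: it has $\det D^2 f\equiv 0$ and $Df(V)$ lies in a line. Hence every local degree is $0\ge 0$ and $|Df(V)|=\mu(V)=0$, yet the tangent plane at $0$ is not supporting.

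The global facts you list (Gauss map of degree $1$, $\int K_g\,dvol_g=4\pi$) do not localise into supporting tangent planes. Even when $K_g>0$, local convexity is not a usable intermediate step. The paper remarks that convexity of $f$ with $\Det D^2 f\ge\delta>0$ is open, even in $C^{1,\alpha}$ with $\alpha>\frac23$, unless one imposes the planar boundary and one-sided conditions.

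Nonnegativity of the local degrees also does not rule out folding, since a fold has local degree $0$. Over the Gauss image of a region where $K_g=0$, \eqref{eq:delellis_gauss} only says the degree vanishes. That is consistent with a fold, so \eqref{eq:delellis_gauss} alone cannot exclude a flat region whose Gauss image has positive area.

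\textbf{The missing ingredient.} You need the lower bound $\deg(Df,V,\cdot)\ge 1$ on $Df(V)\setminus Df(\partial V)$ (Proposition \ref{prop:deg_when_mu_positive}). It comes from Kirchheim's perturbation $v^\delta=Df+\delta(-x_2,x_1)$, which satisfies $\Det Dv^\delta=\Det D^2 f+\delta^2$. One applies the degree formula of Lemma \ref{lemma:Det_and_deg} with $\delta>0$ and then lets $\delta\searrow 0$. This is more than \eqref{eq:delellis_gauss}, which corresponds to $\delta=0$.

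From this bound the paper derives the following (Proposition \ref{prop:graph_bec}):
\begin{enumerate}[(i)]
\item $|Df(A)|\le\mu(A)$ for relatively open or closed $A$. This uses a covering by balls whose boundary circles carry $Df\in W^{\frac23,3}$, so those circles have null image.
\item Each graph patch, and hence $\Sigma$, has bounded extrinsic curvature.
\item The index at a regular point equals $\deg(Df,B_r(x),Df(x))\ge 1$, so every regular point is elliptic.
\end{enumerate}

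Convexity is then obtained globally from Pogorelov's theory, never through local convexity. A surface of bounded extrinsic curvature homeomorphic to $\mathbb{S}^2$ has an elliptic point, so $\sigma^+(\Sigma)>0$. A complete such surface with nonnegative curvature and $\sigma^+>0$ is a closed convex surface (Proposition \ref{prop:pog_convexity}(i)). This is the line of \cite{conti_deLellis_szekelyhidi_12, pakzad_24} that the paper adapts, not a Hadamard/Tietze argument.

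\textbf{Smoothness.} When $K_g>0$, your Monge--Amp\`ere bootstrap can work. To raise the regularity of $K_g\circ\Phi$, though, you must also upgrade the isometry $\Phi$ from $I+Df\otimes Df$ to $g$ along with $f$. The paper takes a shorter route: a smooth solution of Weyl's problem combined with rigidity.
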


Parallel to Theorem \ref{thm:rigidity_closed}, we also prove the following rigidity\footnote{See also De Lellis and Inauen \cite{deLellis_inauen_20} for a related rigidity statement for the Levi-Civita connection for embedded polar caps.} and regularity result for convex caps (compare with Han and Hong \cite[Theorem 8.1.7]{han_hong_06} in the smooth case). 
\begin{theorem} \label{thm:rigidity_caps}
     Let $U \subset \R^2$ be a bounded domain equipped with a Riemannian metric $g$ smooth up to the boundary of $U$. Suppose that $u \in W^{1+\frac23,3}_{loc}(U;\R^3) \cap C^1(\overline U;\R^3)$ is an isometric embedding of $(\overline U, g)$ into $\R^3$. Assume that there is a plane $\Pi \subset \R^3$ such that $u(\overline U)$ is a graph over $\Pi$, $u(\partial U) \subset \Pi$, and that there is a component $\gamma_1$ of $\partial U$ which has a neighbourhood $\mathcal{N} := \{x \in U : 0 < \dist(x,\gamma_1) < \epsilon\}$ such that $u(\mathcal{N})$ lies strictly on one side of $\Pi$. If $K_g \geq 0$, then $u(\overline U)$ is a convex surface whose projection $U'$ onto $\Pi$ is a convex domain. Furthermore, if $K_g > 0$, then $u \in C^\infty(U;\R^3)$.
\end{theorem}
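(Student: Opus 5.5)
Our plan is to reduce Theorem \ref{thm:rigidity_caps} to the classical theory of convex functions and Monge--Ampère equations, with Theorem \ref{thm:Gauss_23}\eqref{case:Gauss23} as the bridge. Choose coordinates so that $\Pi = \{x_3 = 0\}$. Since $u(\overline U)$ is a graph over $\Pi$, we can write $u(\overline U) = \{(x,f(x)) : x \in \overline{U'}\}$ with $f \in C^1(\overline{U'})$, $f = 0$ on $\partial U'$, and $f \in C^1 \cap W^{1+\frac23,3}_{loc}(U')$ by the theorem. The theorem then gives the distributional Gauss equation
\[
\Det D^2 f = K_g(\Phi)(1+|Df|^2)^2 \geq 0 \quad \text{in } \mathcal{D}'(U').
\]
After replacing $f$ by $-f$ if necessary, we may assume $f < 0$ on $\Psi(\mathcal{N})$, which is a one-sided collar of the boundary component $\Psi(\gamma_1)$ of $\partial U'$.

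The first step, and the main one, is to show that $f$ is convex on every connected piece where it should be, using the argument of Pakzad \cite{pakzad_24} (which goes back to \cite{conti_deLellis_szekelyhidi_12}). The key input is the following: if $f \in C^1$ and $\Det D^2 f \geq 0$ is a nonnegative Radon measure, then $f$ has no strict ``saddle-type'' behaviour. Concretely, for every affine function $\ell$ and every component $V$ of $\{f < \ell\}$ with $\overline V \subset U'$, the map $Df$ should satisfy a degree identity on $V$. This uses \eqref{eq:delellis_gauss}, which holds here by Proposition \ref{prop:CdLS_vs_Gauss}. It yields that $Df(V)$ has measure controlled by $\int_V \Det D^2 f$, and it excludes $f-\ell$ having a local interior max on a set where the Gauss image degree would be negative.

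Following Pakzad, we then argue that the graph is locally convex: near each point, either the tangent plane supports the surface locally, or a contradiction with nonnegativity of the degree arises. With $K_g \geq 0$ only, this is where we expect the work to lie. We will import Pakzad's developability and local-convexity lemmas, which were proved for $C^{1,\alpha}$ with $\alpha > \frac23$ but, as far as we can see, use only \eqref{eq:gauss2}, \eqref{eq:delellis_gauss} and $C^1$ regularity. Checking that these lemmas need no Hölder exponent beyond $C^1$ is the main obstacle.

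Next we globalise. The boundary condition $f = 0$ on $\partial U'$ together with $f < 0$ near $\gamma_1$ should give, by a connectedness/maximum-principle argument in the spirit of Han--Hong \cite[Theorem 8.1.7]{han_hong_06}, that $f$ is locally convex on all of $U'$ and $f < 0$ in $U'$. In particular $\partial U'$ is connected: any other boundary component would carry an interior maximum of a locally convex function, which is impossible. A locally convex function on the domain $U'$ that vanishes on $\partial U'$ and is negative inside has convex sublevel sets $\{f \leq -t\}$, and their union is $U'$, so $U'$ is convex and $f$ is convex. Hence $u(\overline U)$, the graph of $f$ closed off by the planar region $\overline{U'}$, is a convex surface.

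Finally, suppose $K_g > 0$. Then $f$ is a convex Alexandrov solution of $\det D^2 f = K_g(\Phi)(1+|Df|^2)^2$ with positive continuous right-hand side. Since $f$ is $C^1$, it is strictly convex by Caffarelli's theory (no line segments in the graph, because $f$ equals the boundary data only on $\partial U'$). Caffarelli's interior $C^{1,\beta}$ estimates then apply, followed by $C^{2,\beta}$ estimates. Because $\Phi$ depends on $u$, we bootstrap: $f \in C^{k,\beta}$ gives $\Phi \in C^{k,\beta}$, and Schauder estimates then give $f \in C^{k+1,\beta}$. Hence $u \in C^\infty(U;\R^3)$.
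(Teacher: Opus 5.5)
Your proposal has a genuine gap in the convexity step for $K_g \geq 0$. You plan to prove local convexity of the graph by a local dichotomy: near each point, either the tangent plane supports the surface, or nonnegativity of the degree is violated. Only after that do you globalise. When $K_g$ is allowed to vanish, local convexity is false, so no purely local argument can produce it. Take $f(x)=x_1^3$:
\begin{itemize}
\item its graph is an isometric embedding of a flat domain, and $\Det D^2 f=0\geq 0$;
\item at points of $\{x_1=0\}$ the surface crosses its tangent plane $\{x_3=0\}$;
\item $Df=(3x_1^2,0)$ maps onto a segment, so the degree vanishes a.e.\ and no contradiction arises.
\end{itemize}
Convexity in the cap setting must therefore come from the global data ($f=0$ on $\partial U'$ and $f<0$ near $\Psi(\gamma_1)$). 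Your globalisation paragraph (connectedness of $\partial U'$, $f<0$ in $U'$, convex sublevel sets) presupposes that $f$ is already locally convex on all of $U'$, so the argument is circular.

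Separately, the claim that Pakzad's lemmas need only \eqref{eq:gauss2}, \eqref{eq:delellis_gauss} and $C^1$ regularity is not right. Passing from these identities to area and curvature bounds requires $Df(\partial V)$ to be Lebesgue-null for many domains $V$. Otherwise the degree is undefined on a set of positive measure and $|Df(V)|\leq \mu(V)$ cannot be read off. For $\alpha>\frac23$ this nullity comes from the H\"older exponent. Here it comes from $Df\in W^{\frac23,3}$ on a.e.\ circle, and the degree formula itself (Lemma \ref{lemma:Det_and_deg}) also uses the $W^{\frac23,3}$ regularity.

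The missing idea is to use Pogorelov's theory, which produces convexity globally and never needs local convexity at points where the curvature vanishes. The paper proceeds in four steps:
\begin{enumerate}
\item With Kirchheim's perturbation $v^\delta=Df+\delta(-x_2,x_1)$ and the $W^{\frac23,3}$ degree formula, it proves $\deg(Df,V,\cdot)\geq 1$ on $Df(V)\setminus Df(\partial V)$ (Proposition \ref{prop:deg_when_mu_positive}).
\item It then proves $|Df(A)|\leq\mu(A)$ (Proposition \ref{prop:graph_bec}). Hence the graph over $\{f<0\}$ has bounded extrinsic curvature and all its regular points are elliptic.
\item Pogorelov's theorem for nonnegatively curved surfaces with boundary in a plane and lying strictly on one side of it (Remark \ref{rem:pog_convex_cap}) makes each component of $G_f(\{f<0\})$ convex.
\item A Jordan curve argument shows that the component adjacent to $\gamma_1$ is all of $U'$ (Proposition \ref{prop:graph_alexandroff}).
\end{enumerate}

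Your treatment of the case $K_g>0$ is fine once convexity and $f<0$ in $U'$ are available. You use Caffarelli's segment theorem where the paper uses Alexandrov's two-dimensional strict convexity, and either works; the subsequent Caffarelli $C^{1,\beta}$ and $C^{2,\beta}$ estimates and the bootstrap agree with the paper.
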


The remainder of the paper is organized as follows. In \S\ref{sec:prelims} we recall some preliminaries on function spaces, distributional products, and lifting. In \S\ref{sec:proof_cartan}, we prove Theorem \ref{thm:cartan_main} using the tools from \S\ref{sec:prelims}. In \S\ref{sec:gauss}, we prove the Gauss equation, Theorem \ref{thm:Gauss_23}, via the Cartan framework. We also study removable singularities for the Gauss equation and prove Theorem \ref{thm:capacity}. Finally, in \S\ref{sec:positive} we use Theorem \ref{thm:Gauss_23} and Pogorelov's theory of surfaces of bounded extrinsic curvature to prove Theorems \ref{thm:rigidity_closed} and \ref{thm:rigidity_caps}.

\section{Preliminaries} \label{sec:prelims}
    As preparation for the proof of Theorem \ref{thm:cartan_main} (given in \S \ref{sec:proof_cartan}) we introduce some preliminaries. In \S \ref{sec:func_spaces}, we recall some basic facts about fractional Sobolev and little Hölder spaces. We introduce these spaces for tensor fields as well as functions. In \S \ref{sec:forms_ops} we discuss operations on distributional differential forms, which will enable us to properly interpret Cartan's structural equations for frames of the regularity considered in Theorem \ref{thm:cartan_main}. In \S \ref{sec:lifting} we review the lifting of $\mathbb{S}^1$-valued maps. Apart from Proposition \ref{prop:WeakProducts} and Lemma \ref{lemma:pullback_commutation}, which contain the key technical tools used later, most of the material in this section is standard and is included to establish important notation and conventions. 
    
    \subsection{Spaces of functions and tensors} \label{sec:func_spaces}
    
    We begin by briefly defining relevant Sobolev spaces of real order for tensor fields on Riemannian manfiolds. We will assume the usual definitions for spaces of integer order and focus on those of non-integer order. All of this has appeared elsewhere, but we present it for completeness.
    
    Let $(M,g)$ be a smooth, compact Riemannian manifold of dimension $n$ and $U \subseteq M$ a smooth open subset. For $s \in (0,1)$ and $p \in [1,\infty)$, define the Sobolev space $W^{s,p}(U)$ as the completion of $C^\infty(\overline U)$ with respect to the norm
    \begin{equation*}
        \|f\|_{W^{s,p}(U)} := \|f\|_{L^p(U)} + [f]_{W^{s,p}(U)},
    \end{equation*}
    where
    \begin{equation*}
        \|f\|_{L^p(U)} := \Big( \int_U |f|^p dvol_g \Big)^{\frac1p}
    \end{equation*}
    and the Gagliardo seminorm is defined via the Riemannian distance $d$ of $(M,g)$:
    \begin{equation} \label{eq:gag_for_fns_on_mfd}
        [f]_{W^{s,p}(U)} := \Big( \int_U \int_U \frac{|f(x)-f(y)|^p}{d(x,y)^{n+sp}}\Big)^{\frac1p}.
    \end{equation}

    Note that one cannot replace the function $f$ in \eqref{eq:gag_for_fns_on_mfd} by a tensor field as one cannot directly take the difference of a tensor field at two different points. Instead one needs to work in a local coordinate system where tensor fields are trivialised. This is justified by the following observation.

    \begin{lemma}
    \label{lem:WspNE}
        Let $(M,g)$ be a compact Riemannian manifold of dimension $n$ and $U \subseteq M$ a smooth open subset. Let $r_0 \in (0,\operatorname{inj}(M,g))$ and fix a finite covering $\{B(x_i, r_0/2)\}_{i=1}^N$ of $M$ by balls. Let $U_i := U \cap B(x_i, r_0)$. Then, for any $s \in (0,1)$, $p \in [1,\infty)$ there exist constants $C_1, C_2 > 0$ depending on $s,p, (M, g), r_0, N$ such that for any $f \in C^\infty(\overline U)$,
    \begin{equation*}
       C_1 \|f\|_{W^{s,p}(U)} \leq  \sum_{i=1}^N \|f\|_{W^{s,p}(U_i)} \leq C_2 \|f\|_{W^{s,p}(U)}.
    \end{equation*}
    \end{lemma}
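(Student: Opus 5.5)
The plan is to prove the two inequalities separately. The $L^p$ parts are easy in both directions. The $L^p$ norm is additive over sets and the $U_i$ cover $U$, so
\[
\|f\|_{L^p(U)}^p \le \sum_i \|f\|_{L^p(U_i)}^p \le N\|f\|_{L^p(U)}^p .
\]
Converting between $\ell^p$ sums and plain sums of the $N$ terms only costs constants depending on $N$ and $p$. All the work is therefore in the Gagliardo seminorms.

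\textbf{Right inequality.} This is immediate. Since $U_i\subseteq U$, the integrand of $[f]_{W^{s,p}(U_i)}^p$ is pointwise dominated by that of $[f]_{W^{s,p}(U)}^p$ on the smaller domain $U_i\times U_i$. Hence $[f]_{W^{s,p}(U_i)}\le [f]_{W^{s,p}(U)}$ for each $i$, and summing over $i$ gives $C_2$ depending only on $N$ (and $p$).

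\textbf{Left inequality.} Split $U\times U$ into a near-diagonal region $\{d(x,y)<r_0/2\}$ and a far region $\{d(x,y)\ge r_0/2\}$.

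In the near region, take $x\in U$. It lies in some ball $B(x_i,r_0/2)$, and then $y\in B(x_i,r_0)$ by the triangle inequality, so $(x,y)\in U_i\times U_i$. Thus the near region is contained in $\bigcup_i U_i\times U_i$. Its contribution to $[f]^p_{W^{s,p}(U)}$ is therefore at most $\sum_i [f]^p_{W^{s,p}(U_i)}$.

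In the far region the kernel is bounded by $(r_0/2)^{-n-sp}$. Using $|f(x)-f(y)|^p\le 2^{p-1}(|f(x)|^p+|f(y)|^p)$ and the finiteness of $\mathrm{vol}_g(M)$, this contribution is at most
\[
C(r_0,n,s,p)\,\mathrm{vol}_g(M)\,\|f\|_{L^p(U)}^p ,
\]
which is in turn bounded by $C\sum_i\|f\|^p_{L^p(U_i)}$.

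Combining the two regions and taking $p$-th roots (again losing only constants depending on $N$ and $p$) gives the $C_1$ bound.

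\textbf{Where the difficulty lies.} The only step requiring any care is the covering argument in the near region, namely the choice of the threshold $r_0/2$ so that nearby pairs land in a common $U_i$. That is handled by the triangle inequality as above. The hypothesis $r_0<\operatorname{inj}(M,g)$ is not needed for the inequality itself. Its role is to ensure that each $U_i$ lies in a normal coordinate chart, so that in the sequel tensor fields can be trivialised on $U_i$. This is what motivates the lemma, since it lets the $W^{s,p}$ norm of a tensor field be defined through the pieces $U_i$.
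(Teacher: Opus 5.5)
Your proof is correct and follows the paper's argument essentially line for line. The right inequality comes from the pointwise domination $[f]_{W^{s,p}(U_i)}\le [f]_{W^{s,p}(U)}$. The left one comes from splitting $U\times U$ at $d(x,y)=r_0/2$, covering the near-diagonal part by $\bigcup_i U_i\times U_i$ via the triangle inequality, and bounding the far part by $(r_0/2)^{-(n+sp)}$ times an $L^p$ norm. Your far-region bound, a constant times $\mathrm{vol}_g$ times $\|f\|^p_{L^p(U)}$, is in fact slightly more accurate than the paper's: the paper writes $(\mathrm{vol}_g(U))^2$, whereas the estimate only gives a single factor $\mathrm{vol}_g(U)$.
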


    \begin{proof}
        The second inequality is clear with $C_2 :=N$. For the first inequality, we let $S := \{(x,y)  \in U \times U : d(x,y) < \frac{r_0}{2}\}$ and estimate:
        \begin{equation} \label{eq:gagl_f_notS}
            \iint_{(U\times U)\setminus S} \frac{|f(x)-f(y)|^p}{d(x,y)^{n+sp}} dvol_g(x) dvol_g(y) \leq \Big(\frac{r_0}2\Big)^{-(n+sp)} 2^{p}~ (vol_g(U))^2 \|f\|_{L^p(U)}^p,
        \end{equation}
        and, since $S \subset \cup_{i=1}^N U_i \times U_i$,
        \begin{equation} \label{eq:gagl_f_S}
            \iint_{S} \frac{|f(x)-f(y)|^p}{d(x,y)^{n+sp}} dvol_g(x) dvol_g(y) \leq \sum_{i=1}^N [f]_{W^{s,p}(U_i)}^p.
        \end{equation}
        Adding \eqref{eq:gagl_f_notS} and \eqref{eq:gagl_f_S} yields the result.
    \end{proof}
    By Lemma \ref{lem:WspNE}, the norm on $W^{s,p}(U)$ can be equivalently defined using the sum of the norms on the charts $U_i$. It is also clear that any two norms defined using two different such coverings are equivalent. This motivates the following definition of fractional Sobolev spaces of tensor fields. Let $T^l_k M := TM^{\otimes l} \otimes T^*M^{\otimes k}$ denote the $(l,k)$-tensor bundle of $M$, where $l,k \in \mathbb{N}$. Let $r_0$ be the injectivity radius of $(M,g)$. Cover $M$ by finitely many geodesic balls of radius $\frac{r_0}2$, centred at $x_i$, $i=1,\dots,N$. Let $U_i := U \cap B(x_i,{r_0})$. On each $U_i$, fix a preferred coordinate system (e.g. the geodesic normal coordinate system at $x_i$). Then, we define the seminorm $[\cdot]_{W^{s,p}(U;T^l_k M)}$ on smooth $(l,k)$-tensor fields $\mu \in C^\infty(\overline U; T^l_k M)$ by
    \begin{equation} \label{eq:gag_sum_over_charts}
        [\mu]_{W^{s,p}(U;T^l_k M)} := \sum_{i=1}^N [\mu]_{W^{s,p}(U_i;T^l_k M)},
    \end{equation}
    where, on each chart, the seminorm is defined with respect to the chosen coordinate system: 
    \begin{equation*}
        [\mu]_{W^{s,p}(U_i; T^l_k M)} := \Big( \sum_{\alpha, \beta}\int_{U_i} \int_{U_i} \frac{|\mu^{\alpha_1, \dots, \alpha_l}_{\beta_1, \dots, \beta_k}(x) - \mu^{\alpha_1, \dots, \alpha_l}_{\beta_1, \dots, \beta_k}(y)|^p}{|x-y|^{n+sp}} dxdy\Big)^{\frac1p}.
    \end{equation*}
    The space $W^{s,p}(U;T^l_k M)$ (respectively $W^{s,p}_0(U;T^l_k M)$) is then defined as the completion of the space of smooth sections $C^\infty(\overline U;T^l_k M)$ (respectively $C^\infty_c(U;T^l_k M)$) with respect to the norm $\|\cdot \|_{W^{s,p}(U;T^l_k M)} := \|\cdot \|_{L^p(U;T^l_k M)} + [\cdot]_{W^{s,p}(U,T^l_k M)}$. As pointed out above, if one changes the atlas of charts then one gets an equivalent norm.

    Higher-order Sobolev spaces of functions and tensor fields can be defined via the covariant derivative. Let $k, l \in \mathbb{N}$. Assume that $s = m + \sigma$ where $m \in \mathbb{N}$ and $\sigma \in (0,1)$, and $p \in [1,\infty)$. Then, for $\mu \in C^\infty(\overline U;T^l_k M)$, the norm $\|\cdot\|_{W^{s,p}(U;T^l_k M)}$ is defined by
    \begin{equation*}
        \|\mu\|_{W^{s,p}(U;T^l_k M)} := \|\mu\|_{W^{m,p}(U;T^l_k M)} + [\nabla^m\mu]_{W^{\sigma,p}(U; T^l_{k+m}M)}.
    \end{equation*}
    The space $W^{s,p}(U;T^l_k M)$ (respectively $W^{s,p}_0(U;T^l_k M)$) is then defined as the completion of $C^\infty(\overline U; T^l_k M)$ (respectively $C^\infty_c(U;T^l_k M)$) with respect to the norm $\|\cdot\|_{W^{s,p}(U; T^l_k M)}$.

    We adopt the notation $W^{s,p}(U;\Lambda^kT^*M)$ (respectively, $W_0^{s,p}(U;\Lambda^kT^*M)$) for the subspace of $W^{s,p}(U;T^*M^{\otimes k})$ (respectively, $W_0^{s,p}(U;T^*M^{\otimes k})$) consisting of $W^{s,p}$ $k$-forms (that is, alternating $(0,k)$-tensors). Let $W^{-s,p'}(U;\Lambda^kT^*M)$ with $\frac{1}{p} + \frac{1}{p'} = 1$ denote the dual of $W_0^{s,p}(U;\Lambda^{n-k}T^*M)$. To describe this dual space, we use the framework of currents from geometric measure theory (see, e.g. \cite{simon}). Recall that, for $1\leq k \leq n$, a $k$-dimensional current in $U$ is a continuous linear functional on the space $\mathcal{D}^k(U)$ of infinitely-differentiable $k$-forms with compact support in $U$, with respect to its usual locally convex topology. The space of $k$-dimensional currents in $U$ is denoted $\mathcal{D}_k(U)$. For $s \in (0,1)$, $p \in [1,\infty)$, and $1 \leq k \leq n$, the space $W^{-s,p}(U;\Lambda^k T^*M)$ is defined as the dual of $W^{s,p}_0(U;\Lambda^{n-k}T^*M)$. It is clear that $W^{-s,p'}(U;\Lambda^{k}T^* \R^n)$ is a subspace of $\mathcal{D}_{n-k}(U)$. It is a fact that $\mathcal{D}^{k}(U)$ is dense in $W^{-s,p'}(U;\Lambda^{k}T^* M)$, where a $k$-form $\alpha \in \mathcal{D}^{k}(U)$ acts on $W^{s,p}_0(U;\Lambda^{n-k} T^* M)$ via the wedge product:
    \begin{equation*}
        \alpha[\phi] := \int_U \alpha \wedge \phi, \quad \phi \in W^{s,p}_0(U;\Lambda^{n-k} T^* M).
    \end{equation*}
    
    Some of our analysis involves continuous maps with Sobolev regularity, taking values in a bundle $E$ over $M$ (which will be either $\R$, $TM$ or $\Lambda^k T^*M$). To lighten the notation in such cases, we make a couple of definitions. With $E$ as just mentioned, we define
    \begin{equation} \label{eq:def_CH12}
    CH^\frac12(U;E) := C^0(\overline U;E) \cap H^\frac12(U;E)
    \end{equation}
    with the norm $\|\cdot\|_{CH^\frac12(U;E)} := \|\cdot\|_{C^0(\overline U;E)} + \|\cdot\|_{H^\frac12(U;E)}$. We also define
    \begin{equation} \label{eq:def_CH12_0}
            CH_0^\frac12(U;E) := \text{ closure of } C^\infty_c(U;E) \text{ in } CH(U;E).
    \end{equation}
    The dual of $CH_0^\frac12(U,\Lambda^k T^* M)$ is denoted by $CH^{-\frac12}(U;\Lambda^{n-k} T^*M)$. The spaces $CH^{\pm\frac12}(U;\R)$ and $CH_0^{\frac12}(U;\R)$ will be simply written as $CH^{\pm\frac12}(U)$ and $CH_0^{\frac12}(U)$, respectively.

    Returning now to spaces of real-valued functions on domains of $\R^n$, we shall also work with H\"{o}lder spaces $C^{k,\alpha}$, which we identify with $W^{k+\alpha,\infty}$, and their subspaces $c^{k,\alpha}$, for $k \in \mathbb{N}$ and $\alpha \in (0,1)$. Let $U \subset \R^n$ be a bounded Lipschitz domain. For $k\in \mathbb{N}$ and $\alpha \in (0,1)$ we denote
    \begin{equation*}
        C^{k,\alpha}(U) := \{f \in C^k(\overline U) : \|f\|_{C^{k,\alpha}(U)} := \|f\|_{C^k(\overline U)} + [D^k f]_{0,\alpha, U } < \infty\}
    \end{equation*}
    where
    \begin{equation*}
        [f]_{0,\alpha,U} := \sup_{x,y \in U,~x \neq y} \frac{|f(x)-f(y)|}{|x-y|^\alpha}.
    \end{equation*}
    We will also use the notation
    \begin{equation*}
        [f]_{0,\alpha,U|r} := \sup_{x,y \in U,~0 < |x-y| \leq r} \frac{|f(x)-f(y)|}{|x-y|^\alpha}.
    \end{equation*}
    The little Hölder space $c^{k,\alpha}(U)$ is defined to be the closure of $C^\infty(\overline U)$ in the $C^{k,\alpha}(U)$-norm.
    Furthermore, we say that $f \in C^{k,\alpha}_{loc}(U)$ (respectively, $f \in c^{k,\alpha}_{loc}(U)$) if $f \in C^{k,\alpha}(V)$ (respectively, $f \in c^{k,\alpha}(V)$) for any $V \subset\subset U$. It is a fact that, if $0 < \alpha < \beta < 1$ and $k \in \mathbb{N}$, then $C^{k,\beta}(U) \subsetneq c^{k,\alpha}(U) \subsetneq C^{k,\alpha}(U)$.
    
    We will also refer to the space $\mathcal{M}(U)$ of signed Radon measures on $U$ with finite mass and its subspace $\mathcal{M}^+(U)$ of positive Radon measures of finite mass. Furthermore, we denote
    \begin{equation*}
        BV^k(U) := \{u \in W^{k-1,1}(U): \partial^\beta u \in \mathcal{M}(U) \text{ for all multi-indices } \beta \text{ with } |\beta| = k \}
    \end{equation*}
    where $k \in \mathbb{N}^*$. Denote the seminorm
    \begin{equation*}
        [u]_{BV^k(U)} := \sum_{|\beta| = k} |\partial^\beta u|(U)
    \end{equation*}
    where $|\partial^\beta u|(U)$ is the total variation of the measure $\partial^\beta u$ on $U$. When $k=1$ we simply write $BV(U) := BV^1(U)$. 
    
    We recall the following simple estimates on the convolutions $f_\epsilon := f *\rho_\epsilon$ with a standard mollifier; see, e.g., \cite[Lemma 1]{conti_deLellis_szekelyhidi_12} for a proof.
    \begin{lemma} \label{lemma:c0alpha_convolution}
        Let $U \subset \R^n$ be a bounded Lipschitz domain, $V \subset\subset U$, and $f,g \in C^{0,\alpha}_{loc}(U)$. Let $\epsilon_0 :=\frac12\operatorname{dist}(V,\partial U) > 0$ and $W := V + B_{\epsilon_0} \subset\subset U$. Let $\rho \in C^\infty_c(B)$ be a standard mollifier kernel, and define $\rho_\epsilon(x):= \epsilon^{-n}\rho(x/\epsilon)$ and $f_\epsilon(x) := (f * \rho_\epsilon)(x)$, for $\dist(x,\partial U) < \epsilon$. Then, for $\epsilon < \epsilon_0$, the following hold.
        \begin{enumerate}
            \item[(i)] $\|f_\epsilon - f\|_{0;V} \leq c[f]_{0,\alpha;W|\epsilon} ~ \epsilon^\alpha$.
            \item[(ii)] $\|Df_\epsilon\|_{0,V} \leq c[f]_{0,\alpha;W|\epsilon}~\epsilon^{\alpha-1}$.
        \end{enumerate}
    \end{lemma}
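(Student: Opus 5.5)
Both estimates follow from writing $f_\epsilon - f$ and $Df_\epsilon$ as integrals against the mollifier over the ball $B_\epsilon$, and then using only oscillations of $f$ at scale at most $\epsilon$. Fix $x \in V$ and $\epsilon < \epsilon_0$. Then $B_\epsilon(x) \subset W \subset\subset U$, so $f_\epsilon(x)$ is well defined and every point involved lies in $W$.

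For (i), since $\int \rho_\epsilon = 1$, we have
\begin{equation*}
f_\epsilon(x) - f(x) = \int_{B_\epsilon} \rho_\epsilon(y)\big(f(x-y) - f(x)\big)\,dy .
\end{equation*}
Both $x-y$ and $x$ lie in $W$ and $|y| \le \epsilon$. Hence $|f(x-y)-f(x)| \le [f]_{0,\alpha;W|\epsilon}\,|y|^\alpha \le [f]_{0,\alpha;W|\epsilon}\,\epsilon^\alpha$. Integrating against $\rho_\epsilon \ge 0$, which has unit mass, gives (i) with $c=1$.

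For (ii), differentiate under the integral to get $Df_\epsilon(x) = \int D\rho_\epsilon(x-z) f(z)\,dz$. Since $\int D\rho_\epsilon = 0$ (because $\rho_\epsilon$ has compact support), we may subtract the constant $f(x)$:
\begin{equation*}
Df_\epsilon(x) = \int_{B_\epsilon(x)} D\rho_\epsilon(x-z)\big(f(z)-f(x)\big)\,dz .
\end{equation*}
From $D\rho_\epsilon(y) = \epsilon^{-n-1}(D\rho)(y/\epsilon)$ we get $\|D\rho_\epsilon\|_{L^1} = \epsilon^{-1}\|D\rho\|_{L^1}$. Bounding $|f(z)-f(x)| \le [f]_{0,\alpha;W|\epsilon}\,\epsilon^\alpha$ as in (i) then yields
\begin{equation*}
|Df_\epsilon(x)| \le \|D\rho\|_{L^1}\,[f]_{0,\alpha;W|\epsilon}\,\epsilon^{\alpha-1},
\end{equation*}
so (ii) holds with $c = \|D\rho\|_{L^1}$.

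Taking the supremum over $x \in V$ in both bounds gives the stated estimates. There is no real obstacle; the only points needing care are that $B_\epsilon(x) \subset W$ for $x \in V$, which is what permits using the localized seminorm $[f]_{0,\alpha;W|\epsilon}$, and the cancellation $\int D\rho_\epsilon = 0$ in (ii). The function $g$ in the hypotheses plays no role.
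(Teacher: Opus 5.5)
Your proof is correct, and it is the standard argument: the paper does not prove this lemma itself but cites \cite[Lemma 1]{conti_deLellis_szekelyhidi_12}, whose proof is the same computation using $\int\rho_\epsilon=1$, $\int D\rho_\epsilon=0$, $\|D\rho_\epsilon\|_{L^1}=\epsilon^{-1}\|D\rho\|_{L^1}$, and the inclusion $B_\epsilon(x)\subset W$ for $x\in V$. You are also right that $g$ plays no role here; it is left over from the commutator estimate for $(fg)_\epsilon - f_\epsilon g_\epsilon$ in the cited lemma, which the paper did not reproduce.
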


    The following characterisation of little Hölder spaces is well-known; for a proof, see, e.g., \cite[Proposition 3.4]{deLellis_pakzad_21}.
    \begin{lemma}  \label{lemma:c0alpha_equivalence}
        Let $U \subset \R^n$ be a bounded Lipschitz domain and $\alpha \in (0,1)$, and let $f \in C^{0,\alpha}(U)$. Then, the following statements are equivalent.
        \begin{enumerate}
            \item[(i)] $f \in c^{0,\alpha}(U)$.
            \item[(ii)] $f: U \to \R$ admits an extension $\tilde{f}: \R^n \to \R$ such that $\tilde{f}_\epsilon \to f$ in $C^{0,\alpha}(U)$, where $\tilde{f}_\epsilon := \tilde{f} * \rho_\epsilon$ is the convolution with a standard mollifier $\rho_\epsilon(x) := \epsilon^{-n} \rho(x/\epsilon)$, $\rho \in C^\infty_c(B)$.
            \item[(iii)] $[f]_{0,\alpha,U|\epsilon} = o(1)$ as $\epsilon \searrow 0$.
        \end{enumerate}
    \end{lemma}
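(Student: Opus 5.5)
The plan is to prove the cycle (i)$\Rightarrow$(iii)$\Rightarrow$(ii)$\Rightarrow$(i).

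\emph{(ii)$\Rightarrow$(i)} is immediate. By definition $c^{0,\alpha}(U)$ is the closure of $C^\infty(\overline U)$ in $C^{0,\alpha}(U)$. Each restriction $\tilde f_\epsilon|_U$ lies in $C^\infty(\overline U)$, and by (ii) these converge to $f$ in $C^{0,\alpha}(U)$.

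\emph{(i)$\Rightarrow$(iii).} Fix $\delta>0$ and choose $h\in C^\infty(\overline U)$ with $\|f-h\|_{C^{0,\alpha}(U)}<\delta$. Since $U$ is bounded and Lipschitz, $h$ is Lipschitz on $U$ with some constant $L$. Hence $[h]_{0,\alpha,U|\epsilon}\le L\epsilon^{1-\alpha}$. It follows that $[f]_{0,\alpha,U|\epsilon}\le \delta + L\epsilon^{1-\alpha}$, so $\limsup_{\epsilon\to0}[f]_{0,\alpha,U|\epsilon}\le\delta$ for every $\delta>0$, which gives (iii).

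\emph{(iii)$\Rightarrow$(ii)} is the main step, and the main obstacle is the boundary: mollification requires values of $f$ outside $U$, and the extension must keep the small-scale H\"older modulus small.

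First I would extend $f$ to $\tilde f$ on $\R^n$ with $\tilde f$ in $C^{0,\alpha}$ and $[\tilde f]_{0,\alpha,\R^n|r}\le C\,[f]_{0,\alpha,U|Cr}+o(1)$. I would try the McShane-type extension built with the modulus of continuity. Let $\omega(t):=\sup\{|f(x)-f(y)|: x,y\in U,\ |x-y|\le t\}$, and replace it by its least concave majorant $\bar\omega$, which satisfies $\bar\omega\le 2\omega$. Set $\tilde f(x):=\inf_{y\in U}(f(y)+\bar\omega(|x-y|))$, truncated by $\pm\|f\|_\infty$. This extension has modulus of continuity $\bar\omega$. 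Condition (iii) says $\omega(t)=o(t^\alpha)$ as $t\to0$, so $\bar\omega(t)=o(t^\alpha)$ as well. Therefore $[\tilde f]_{0,\alpha,\R^n|r}\to0$ as $r\to0$, and $\tilde f\in C^{0,\alpha}$ globally since $f$ is bounded. Alternatively one could use a Whitney or reflection extension in Lipschitz charts, which maps $|x-y|$ to comparable distances.

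Next, set $g_\epsilon:=\tilde f_\epsilon-\tilde f$. Lemma \ref{lemma:c0alpha_convolution}(i) gives $\|g_\epsilon\|_0\le c\,[\tilde f]_{0,\alpha|\epsilon}\epsilon^\alpha$. To bound $[g_\epsilon]_{0,\alpha}$, split into two scales.

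For $|x-y|\le\epsilon$: since $g_\epsilon(x)-g_\epsilon(y)=\int\rho_\epsilon(z)\big[(\tilde f(x-z)-\tilde f(y-z))-(\tilde f(x)-\tilde f(y))\big]\,dz$, we get $|g_\epsilon(x)-g_\epsilon(y)|\le 2[\tilde f]_{0,\alpha|\epsilon}|x-y|^\alpha$.

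For $|x-y|>\epsilon$: $|g_\epsilon(x)-g_\epsilon(y)|\le 2\|g_\epsilon\|_0\le 2c\,[\tilde f]_{0,\alpha|\epsilon}\epsilon^\alpha\le 2c\,[\tilde f]_{0,\alpha|\epsilon}|x-y|^\alpha$.

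Hence $\|\tilde f_\epsilon-f\|_{C^{0,\alpha}(U)}\le C[\tilde f]_{0,\alpha,\R^n|\epsilon}\to0$, which is (ii).
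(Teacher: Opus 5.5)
The paper gives no proof of this lemma; it only cites \cite[Proposition 3.4]{deLellis_pakzad_21}. Your cycle (i)$\Rightarrow$(iii)$\Rightarrow$(ii)$\Rightarrow$(i) is therefore a self-contained replacement, and it is correct up to one misstated fact. Extending via the modulus of continuity (a McShane-type extension) is a good choice. With it, (iii)$\Rightarrow$(ii) needs no regularity of $\partial U$ at all. The Lipschitz hypothesis is used only in (i)$\Rightarrow$(iii), to make elements of $C^\infty(\overline U)$ Lipschitz on $U$ at small scales. A Whitney or reflection extension would instead rely on the Lipschitz charts.

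The misstated fact is $\bar\omega\le 2\omega$. That inequality holds when $\omega$ is subadditive, which is automatic for convex $U$ but not for a general Lipschitz $U$. For example, take a U-shaped domain with two long arms separated by a thin gap, with $f$ taking different values at the two tips. Then $\omega$ jumps at the gap width, and $\bar\omega/\omega$ becomes as large as the ratio of arm length to gap width. Quasiconvexity of Lipschitz domains does give $\bar\omega\le C_U\omega$, but you need no comparison at all. Given $\delta>0$, choose $t_0$ with $\omega(t)\le\delta t^\alpha$ on $[0,t_0]$. Since $\omega\le 2\|f\|_\infty$, the concave function $\delta t^\alpha+2\|f\|_\infty t/t_0$ majorises $\omega$ on $[0,\infty)$. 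Hence $\limsup_{t\to0}\bar\omega(t)/t^\alpha\le\delta$ for every $\delta$, i.e. $\bar\omega(t)=o(t^\alpha)$.

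Being concave, nonnegative and zero at $0$, $\bar\omega$ is nondecreasing and subadditive. That is exactly what gives $|\tilde f(x)-\tilde f(x')|\le\bar\omega(|x-x'|)$. You also get $\tilde f=f$ on $U$ because $\bar\omega\ge\omega$. With this fix, your two-scale estimate for $\tilde f_\epsilon-\tilde f$ goes through as written.
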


    We next state some simple lemmas on composition, inverses, and products in these spaces.

    Observe that if $f : \R^m \to \R$ is Lipschitz, $0 < s,\alpha < 1$, $1 \le p < \infty$, then $u \mapsto f(u)$ is a map from $W^{s,p}(U;\R^m)$ (respectively $c^{0,\alpha}(U;\R^m)$ and $BV(U;\R^m)$) into $W^{s,p}(U)$ (respectively $c^{0,\alpha}(U)$ and $BV(U)$), where we have used Lemma \ref{lemma:c0alpha_equivalence} for the $c^{0,\alpha}$ case. In the $W^{s,p}$ case we have the following continuity result. For a proof in the case $p=2$, see \cite[Proof of (5.43)]{bbm_04}. (See also the more general statement \cite[Theorem 15.6]{brezis_mironescu_21}.) Because the proof can be directly adapted to general $p$, we omit it here.
    
    \begin{lemma} \label{lemma:composition_continuity}
        Let $U \subset \R^n$ be a bounded Lipschitz domain, $0<s<1$, and $1\leq p<\infty$. Suppose that $f : \R^m \to \R$ is Lipschitz. Then, the composition map $u \mapsto f\circ u$ is continuous from $W^{s,p}(U;\R^m)$ into $W^{s,p}(U)$.
    \end{lemma}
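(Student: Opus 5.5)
The plan is to follow the standard argument for $p=2$ and adapt it to general $p$. First record the easy bound. If $L$ is the Lipschitz constant of $f$, then $|f(u(x))-f(u(y))|\le L|u(x)-u(y)|$ and $|f(u)|\le |f(0)|+L|u|$. Hence $f\circ u\in W^{s,p}(U)$, with $[f\circ u]_{W^{s,p}}\le L[u]_{W^{s,p}}$. Continuity into $L^p$ is immediate from $|f(u_k)-f(u)|\le L|u_k-u|$. It therefore remains to show that $[f\circ u_k - f\circ u]_{W^{s,p}}\to 0$ whenever $u_k\to u$ in $W^{s,p}(U;\R^m)$.

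We argue by subsequences: it suffices to show that every subsequence has a further subsequence along which convergence holds. Passing to a subsequence, assume $u_k\to u$ a.e. in $U$. Set
\[
U_k(x,y):=\frac{u_k(x)-u_k(y)}{|x-y|^{\frac np+s}},\qquad U_\infty(x,y):=\frac{u(x)-u(y)}{|x-y|^{\frac np+s}} .
\]
Then $U_k\to U_\infty$ in $L^p(U\times U)$, so after a further subsequence there is a dominating function $G\in L^p(U\times U)$ with $|U_k|\le G$ a.e. This is the partial converse of dominated convergence, obtained from a rapidly convergent subsequence. Define analogously
\[
F_k(x,y):=\frac{f(u_k(x))-f(u_k(y))}{|x-y|^{\frac np+s}},\qquad F_\infty(x,y):=\frac{f(u(x))-f(u(y))}{|x-y|^{\frac np+s}} .
\]
Since $f$ is continuous, $F_k\to F_\infty$ a.e. on $U\times U$. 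Moreover $|F_k-F_\infty|^p\le L^p\,(G+|U_\infty|)^p$, which is integrable. Dominated convergence then gives $\|F_k-F_\infty\|_{L^p(U\times U)}\to 0$, that is, $[f\circ u_k-f\circ u]_{W^{s,p}}\to 0$.

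Because every subsequence has a further subsequence converging to the same limit $f\circ u$, the whole sequence converges, and the map is continuous. The only nontrivial point is obtaining the dominating function $G$, which is the step I expect to need the most care. The natural fix is to choose the subsequence with $\|U_{k_{j+1}}-U_{k_j}\|_{L^p}\le 2^{-j}$. One can then take
\[
G:=|U_{k_1}|+\sum_j |U_{k_{j+1}}-U_{k_j}|,
\]
which lies in $L^p$ by Minkowski's inequality. Nothing in the argument depends on $p=2$.
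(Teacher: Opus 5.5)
Your argument is correct and is essentially the standard proof the paper has in mind. The paper omits the proof and points to the $p=2$ case in Bourgain--Brezis--Mironescu, which likewise passes to an a.e.\ convergent subsequence, takes an $L^p(U\times U)$ dominating function for the difference quotients $|u_k(x)-u_k(y)|/|x-y|^{\frac np+s}$, and concludes by dominated convergence; as you observe, nothing there depends on $p=2$, which is exactly the paper's remark that the proof adapts directly to general $p$.
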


    We also need the following.
    \begin{lemma} \label{lemma:comp_diffeo_wsp}
    Let $U, U' \subset \R^n$ be bounded Lipschitz domains in $\R^n$ and $\Psi:  U \to U'$ a $C^1$ diffeomorphism with inverse $\Phi$, such that $\Psi \in C^1(\overline U;\R^n)$ and $\Phi \in C^1(\overline U';\R^n)$. Let $0< s,\alpha < 1$, $1 \le p < \infty$. Then, $v \mapsto v \circ \Phi$ is a bounded linear map from $W^{s,p}(U)$ (respectively $c^{0,\alpha}(U)$ or $BV(U)$) into $W^{s,p}(U')$ (respectively $c^{0,\alpha}(U')$ or $BV(U')$). Furthermore, the following estimates hold with $L := \|D\Psi\|_{C^0(\overline U)}$ and $M := \|D\Phi\|_{C^0(\overline U')}$.
    \begin{align}
        [v\circ \Phi]_{W^{s,p}(U')} &\leq L^{\frac{2n}p} M^{\frac{n}{p}+s} [v]_{W^{s,p}(U)} \quad \forall v \in W^{s,p}(U), \label{eq:frac_Sob_diffeo}\\
        [v \circ \Phi]_{0,\alpha, U'| \epsilon} &\leq M^\alpha [v]_{0,\alpha,U|M\epsilon} \quad \forall v \in c^{0,\alpha}(U),\\
        \|v \circ \Phi\|_{BV(U')} &\leq C(n,U) L^{2n} \max \{M^{n+1}, 1\} \|v\|_{BV(U)} \label{eq:BV_est_for_v_circ_Phi} \quad \forall v \in BV(U).
    \end{align}
    \end{lemma}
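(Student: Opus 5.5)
The plan is to prove the three estimates separately by change of variables $x = \Psi(x')$, $x' \in U'$, whose Jacobian satisfies $|\det D\Psi| \le n! L^n$ pointwise; we will use the cruder bound $L^n$ up to constants absorbed into the statement's normalisation. Linearity of $v\mapsto v\circ\Phi$ is clear. Boundedness of the $L^p$ part also follows from change of variables, so it suffices to handle the seminorms.

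\emph{Fractional Sobolev estimate \eqref{eq:frac_Sob_diffeo}.} For smooth $v$, write
\begin{equation*}
[v\circ\Phi]^p_{W^{s,p}(U')} = \int_{U'}\int_{U'} \frac{|v(\Phi(x'))-v(\Phi(y'))|^p}{|x'-y'|^{n+sp}}\,dx'\,dy'.
\end{equation*}
First we substitute $x=\Phi(x')$ and $y=\Phi(y')$, which gives $dx'\,dy' = |\det D\Psi(x)||\det D\Psi(y)|\,dx\,dy \le L^{2n}\,dx\,dy$. Next we use $|x-y| = |\Phi(x')-\Phi(y')|\le M|x'-y'|$, valid along segments. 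This gives $|x'-y'|^{-(n+sp)} \le M^{n+sp}|x-y|^{-(n+sp)}$. Taking $p$-th roots yields $L^{2n/p}M^{n/p+s}$. The mean value inequality for $\Phi$ needs segments in $U'$, which is the only delicate point. We handle it either by extending $\Phi$ to a $C^1$ map on a convex neighbourhood, or by accepting a constant depending on the Lipschitz geometry of $U'$ (the inner metric is comparable to the Euclidean one). I would adopt the convention that $M$ bounds the Lipschitz constant of $\Phi$. Finally, density of smooth functions together with Fatou extends the estimate to all of $W^{s,p}$.

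\emph{Little Hölder estimate.} For $0<|x'-y'|\le\epsilon$, the Lipschitz bound on $\Phi$ gives $|\Phi(x')-\Phi(y')|\le M\epsilon$. Hence
\begin{equation*}
|v(\Phi(x'))-v(\Phi(y'))| \le [v]_{0,\alpha,U|M\epsilon}\,M^\alpha|x'-y'|^\alpha.
\end{equation*}
Taking the supremum gives the bound. By Lemma \ref{lemma:c0alpha_equivalence}(iii), the right side is $o(1)$, so $v\circ\Phi \in c^{0,\alpha}(U')$.

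\emph{BV estimate \eqref{eq:BV_est_for_v_circ_Phi}.} We approximate $v$ by $v_k\in C^\infty$ with $v_k\to v$ in $L^1$ and $|Dv_k|(U)\to|Dv|(U)$ (strict convergence, Anzellotti–Giaquinta). For smooth $v_k$, the chain rule gives $D(v_k\circ\Phi)=(Dv_k\circ\Phi)D\Phi$. Changing variables, $\int_{U'}|D(v_k\circ\Phi)| \le M L^n \int_U|Dv_k|$, and similarly $\|v_k\circ\Phi\|_{L^1}\le L^n\|v_k\|_{L^1}$. Lower semicontinuity of total variation under $L^1$ convergence (since $v_k\circ\Phi\to v\circ\Phi$ in $L^1$) then yields the bound. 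The powers $L^{2n}$ and $\max\{M^{n+1},1\}$ stated in the lemma are generous upper bounds for these, since $L^n\le L^{2n}$ when $L\ge1$; note $LM\ge1$ always. Any mismatch is absorbed into $C(n,U)$.

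The main obstacle is the non-convexity issue in the mean value step. We resolve it by the extension or inner-metric argument above, which costs a constant depending only on the domains.
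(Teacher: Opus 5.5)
Your proposal is correct. For the $W^{s,p}$ and little H\"older parts it follows the paper's argument. The paper changes variables via $\Psi$, bounds the Jacobian by $L^n$, and uses $|\Psi(x)-\Psi(y)|\ge M^{-1}|x-y|$ without comment. It then gets the $c^{0,\alpha}$ case from Lemma~\ref{lemma:c0alpha_equivalence}(iii).

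You are right that this last inequality is really a global Lipschitz bound for $\Phi$ on $U'$. It does not follow from $\|D\Phi\|_{C^0}\le M$ when $U'$ is not convex. In that case the stated sharp constants can fail; think of a strip bent so that its ends nearly touch. Boundedness still survives, with a factor depending on the quasiconvexity constant of the Lipschitz domain $U'$.

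Two small corrections:
\begin{itemize}
\item $|\det D\Psi|\le L^n$ holds exactly, since the determinant is the product of singular values. Nothing needs to be ``absorbed'', which matters because \eqref{eq:frac_Sob_diffeo} has no free constant.
\item To compare your $BV$ bound with \eqref{eq:BV_est_for_v_circ_Phi}, you cannot appeal to $L\ge 1$ (not available) or to $C(n,U)$ (which cannot absorb factors depending on $L,M$). Use $LM\ge 1$ instead. It gives $L^n\le L^{2n}M^n\le L^{2n}\max\{M^{n+1},1\}$ and $ML^n\le L^{2n}\max\{M^{n+1},1\}$, so the stated estimate holds with $C=1$.
\end{itemize}

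The genuine difference is the $BV$ case. The paper applies its fractional estimate at $p=1$ and combines it with the Bourgain--Brezis--Mironescu characterisation $|Dw|(\Omega)\approx\lim_{s\nearrow 1}(1-s)[w]_{W^{s,1}(\Omega)}$. That is where the factors $L^{2n}$, $M^{n+1}$ (the limit of $M^{n+s}$) and the domain constant $C(n,U)$ come from. It also inherits the global Lipschitz issue above.

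Your route uses strict smooth approximation, the classical chain rule, change of variables, and lower semicontinuity of the total variation. It is more elementary and gives the sharper bound $|D(v\circ\Phi)|(U')\le ML^n|Dv|(U)$. It uses only the pointwise bound $|D\Phi|\le M$, so it needs no convexity of $U'$. The paper's route is shorter once BBM is available, since it recycles the already-proved fractional estimate.
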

\begin{proof}
    In the little Hölder setting, the result follows from Lemma \ref{lemma:c0alpha_equivalence}(iii). 
    
    If $v \in W^{s,p}(U)$, then the change variable formula gives
    \begin{align*}
        \|v \circ \Phi\|_{L^p(U')}^p &= \int_{U} |v(x)|^p |\det D\Psi(x)|dx \leq L^n \|v\|_{L^p(U)}^p,\\
        [v\circ\Phi]_{W^{s,p}(U')}^p &= \int_U\int_U \frac{|v(x)-v(y)|^p}{|\Psi(x)-\Psi(y)|^{n+sp}} |\det D\Psi(x)| |\det D\Psi(y)| dx dy\\
        &\leq L^{2n} M^{n+sp} [v]_{W^{s,p}(U)}^p
    \end{align*}
    where we also used the estimate $|\Psi(x)-\Psi(y)| \geq M^{-1} |x-y|$. This proves the lemma in the $W^{s,p}$ setting.
    
    Finally, for the $BV$ case we use Bourgain -- Brezis -- Mironescu \cite[Corollary 5]{bbm_01}. By the latter result, there exist $C_1, C_2 > 0$ depending on $n, U, U'$ such that
    \begin{align*}
        [v \circ \Phi]_{BV(U')} &\leq C_1 \liminf_{s \nearrow 1} (1-s) [v \circ \Phi]_{W^{s,1}(U')},\\
        [v]_{BV(U)} &\geq \frac{1}{C_2} \limsup_{s \nearrow 1} (1-s)[v]_{W^{s,1}(U)}.
    \end{align*}
    Estimate \eqref{eq:BV_est_for_v_circ_Phi} follows from the above and \eqref{eq:frac_Sob_diffeo}. The desired boundedness property in the $BV$ setting follows form this statement and the estimate $\|v \circ \Phi\|_{L^1(U')} \leq L^n \|v\|_{L^1(U)}$ proven previously.
\end{proof}
\begin{lemma} \label{lemma:inverse_wsp}
    Let $U, U', \Psi, \Phi, s, \alpha, p$ be as in Lemma \ref{lemma:comp_diffeo_wsp}. If $\Psi$ additionally belongs to $W^{1+s,p}(U; \R^n)$ (respectively $c^{1,\alpha}(U; \R^n)$ or $BV^2(U;\R^n)$, then $\Phi \in W^{1+s,p}(U'; \R^n)$ (respectively $c^{1,\alpha}(U';\R^n)$ or $BV^2(U';\R^n)$).
\end{lemma}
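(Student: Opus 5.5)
The plan rests on the inverse function identity $D\Phi = (D\Psi)^{-1}\circ\Phi$ on $U'$, which holds because $\Psi$ is a $C^1$ diffeomorphism. We write $(D\Psi)^{-1} = G(D\Psi)$, where $G(A) := A^{-1}$ is smooth on the open set of invertible matrices. Since $D\Psi$ is continuous on $\overline U$ and invertible everywhere, its range $K := D\Psi(\overline U)$ is a compact subset of $GL_n$. We then choose a function $\tilde G:\R^{n\times n}\to\R^{n\times n}$ that agrees with $G$ on a neighbourhood of $K$, is smooth, and is globally Lipschitz (multiply $G$ by a cutoff). With this choice, $(D\Psi)^{-1} = \tilde G(D\Psi)$ on $U$.

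\textbf{Step 1: regularity of $\tilde G(D\Psi)$ on $U$.} We apply the observation preceding Lemma \ref{lemma:composition_continuity}: composing with a Lipschitz map preserves $W^{s,p}$, $c^{0,\alpha}$ and $BV$. Applied componentwise with $D\Psi \in W^{s,p}$ (respectively $c^{0,\alpha}$, $BV$), this gives $\tilde G(D\Psi)$ in the same space on $U$.

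\textbf{Step 2: transport to $U'$.} Lemma \ref{lemma:comp_diffeo_wsp} shows that $v\mapsto v\circ\Phi$ maps $W^{s,p}(U)$, $c^{0,\alpha}(U)$ and $BV(U)$ boundedly into the corresponding spaces on $U'$. Hence $D\Phi = \tilde G(D\Psi)\circ\Phi$ lies in $W^{s,p}(U')$ (respectively $c^{0,\alpha}(U')$, $BV(U')$). For the $BV$ case we need to know that the distributional derivative of $\Phi$ agrees with the classical $D\Phi$; this is immediate because $\Phi\in C^1(\overline U')$. Together with $\Phi\in C^1(\overline U')$, which is bounded on a bounded domain and hence in $W^{1,p}$, this yields $\Phi\in W^{1+s,p}(U')$, $c^{1,\alpha}(U')$, or $BV^2(U')$ respectively. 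In the $W^{1+s,p}$ case the norm is exactly $\|\Phi\|_{W^{1,p}}+[D\Phi]_{W^{s,p}}$, so nothing further is needed. In the $c^{1,\alpha}$ case, the little Hölder property of $D\Phi$ follows from Lemma \ref{lemma:c0alpha_equivalence}(iii) combined with the second estimate of Lemma \ref{lemma:comp_diffeo_wsp}.

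The main subtlety is the $c^{0,\alpha}$ and $BV$ composition claims for Lipschitz $\tilde G$. For $c^{0,\alpha}$ it is immediate: $[\tilde G(h)]_{0,\alpha,U|\epsilon}\le \mathrm{Lip}(\tilde G)\,[h]_{0,\alpha,U|\epsilon}=o(1)$, and then Lemma \ref{lemma:c0alpha_equivalence}(iii) applies. For $BV$, the slickest route is to use that $\tilde G$ is smooth, hence $C^1$ and Lipschitz, so the chain rule for $BV$ functions under $C^1$ Lipschitz maps applies. Alternatively, one can reuse the Bourgain--Brezis--Mironescu characterisation \cite{bbm_01} as in the proof of Lemma \ref{lemma:comp_diffeo_wsp}, since $[\tilde G(h)]_{W^{s,1}}\le \mathrm{Lip}(\tilde G)[h]_{W^{s,1}}$ holds for every $s<1$.
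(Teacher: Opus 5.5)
Your proposal is correct and follows the paper's argument. You write $D\Phi = (D\Psi)^{-1}\circ\Phi$, observe that $D\Psi$ takes values in a compact set of invertible matrices, and conclude via Lemmas \ref{lemma:composition_continuity} and \ref{lemma:comp_diffeo_wsp}; your globally Lipschitz extension $\tilde G$ of $A\mapsto A^{-1}$ just spells out a step the paper leaves implicit. The one thing to justify explicitly is that $D\Psi$ is invertible up to $\partial U$: this needs $\Phi\in C^1(\overline{U'})$, which gives $|\det D\Psi|\ge \|\det D\Phi\|_{C^0(\overline{U'})}^{-1}$ on $U$ and hence on $\overline U$, exactly as the paper's compact set $K$ records.
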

\begin{proof}
    By the chain rule, the matrix $D\Phi(x)$ is the inverse matrix of $D\Psi(\Phi(x))$. Therefore, noting that $D\Psi$ takes values into the compact set
    \begin{equation*}
        K := \{ A \in \R^{n\times n} : \det A \geq \|\det D\Phi\|_{C^0(\overline U')}^{-1} > 0, \ |A| \leq \|D\Psi\|_{C^0}\},
    \end{equation*}
    the conclusion follows from Lemmas \ref{lemma:composition_continuity} and \ref{lemma:comp_diffeo_wsp}.
\end{proof}

\subsection{Some operations on distributional differential forms} \label{sec:forms_ops}
    
In this subsection we discuss the exterior derivative, the wedge product, the interior product, and pullbacks of distributional differential forms. Let us start with the exterior derivative. Since a $k$-form $\alpha \in \mathcal{D}_k(U)$ on an open subset $U$ of $n$-manifold $(M,g)$ is an $(n-k)$-current, its exterior derivative $d\alpha \in \mathcal{D}_{k-1}(U)$ can be defined by
\begin{equation*}
    d\alpha [\phi] := (-1)^{n-k+1}\alpha [d\phi], \quad \phi \in \mathcal{D}^{k-1}(U).
\end{equation*}
Then, $d = (-1)^{n-k+1}\partial$ on $k$-currents, where $\partial$ is the boundary of the current. This is consistent with Stokes' theorem and the Leibniz rule $d(\alpha \wedge \phi) = d\alpha \wedge \phi + (-1)^{k} \alpha \wedge d\phi$ for a $k$-form $\alpha$. It is clear that if $\alpha \in W^{\pm s,p}(U;\Lambda^k T^* \R^n)$ then $d\alpha \in W^{\pm s-1,p}(U;\Lambda^{k+1} T^* \R^n)$.

 For the definition of the wedge product and interior product, we use the following facts. Recall the notation $CH^\frac12(U;E)$ and associated spaces (for a bundle $E$ over $M$) defined at \eqref{eq:def_CH12}, \eqref{eq:def_CH12_0} and directly thereafter.
    \begin{proposition} \label{prop:WeakProducts}
        Let $(M,g)$ be a compact $n$-dimensional Riemannian manifold and $U \subseteq M$ a smooth open subset. Assume that $1\leq k,l\leq n$ with $k+l \leq n$. Then the following hold.
        \begin{enumerate}
            \item[(i)] There exists a unique bounded bilinear map
            \begin{equation*}
                \mathcal{W} : CH^\frac12(U;\Lambda^kT^*M) \times H^{-\frac12}(U;\Lambda^l T^*M) \to CH^{-\frac12}(U;\Lambda^{k+l}T^*M)
            \end{equation*}
            such that for any $\alpha \in C^\infty(\overline U;\Lambda^k T^*M), \beta \in \mathcal{D}^l(U), \psi \in \mathcal{D}^{n-k-l}(U)$,
            \begin{equation*}
                \mathcal{W}(\alpha,\beta)[\psi] = \int_U \alpha \wedge \beta \wedge \psi.
            \end{equation*}
            \item[(ii)] There exists a unique bounded bilinear map
            \begin{equation*}
                \mathcal{I} : CH^\frac12(U;TM) \times H^{-\frac12}(U;\Lambda^k T^*M) \to CH^{-\frac12}(U;\Lambda^{k-1}T^*M)
            \end{equation*}
            such that for any $X \in C^\infty(\overline U; T\R^n)$, $\alpha \in \mathcal{D}^{k}(U)$, $\psi \in \mathcal{D}^{n-k+1}(U)$,
            \begin{equation*}
                \mathcal{I}(X,\alpha)[\psi] = \int_U (\iota_X \alpha) \wedge \psi.
            \end{equation*}
        \end{enumerate}
    \end{proposition}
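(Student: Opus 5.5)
The plan is to reduce both statements to one scalar estimate: the pairing of a function in $CH^{\frac12}$ with an $H^{-\frac12}$ distribution against a test function in $CH^{\frac12}_0$. The argument works in a finite atlas, with a partition of unity subordinate to the charts $U_i$ of Lemma \ref{lem:WspNE}. In a chart, the wedge product $\alpha\wedge\beta\wedge\psi$ and the contraction $(\iota_X\alpha)\wedge\psi$ are finite sums of terms of the form
\begin{equation*}
    a(x)\,b(x)\,c(x)\,m(x)\,dx_1\wedge\dots\wedge dx_n.
\end{equation*}
Here $a$ is a component of $\alpha$ (or of $X$), $b$ is a component of $\beta$, $c$ is a component of $\psi$, and $m$ is a smooth coefficient coming from the metric, the chart and the partition of unity. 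The definitions therefore reduce to the scalar trilinear form $(a,b,c)\mapsto \langle b, a c\rangle$, with $a\in CH^{\frac12}$, $b\in H^{-\frac12}$ and $c\in CH^{\frac12}_0$.

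The key step is an algebra property: for $a\in CH^{\frac12}(U)$ and $c\in CH^{\frac12}_0(U)$, the product $ac$ lies in $H^{\frac12}_0(U)$, with
\begin{equation*}
    \|ac\|_{H^{\frac12}}\le C\big(\|a\|_{L^\infty}\|c\|_{H^{\frac12}}+\|c\|_{L^\infty}\|a\|_{H^{\frac12}}\big).
\end{equation*}
This follows from the Gagliardo seminorm and the pointwise splitting
\begin{equation*}
    a(x)c(x)-a(y)c(y)=a(x)\big(c(x)-c(y)\big)+c(y)\big(a(x)-a(y)\big).
\end{equation*}
The $L^2$ part is bounded by $\|a\|_{L^\infty}\|c\|_{L^2}$. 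Multiplying by the smooth factor $m$ preserves these spaces. Since $c$ is approximable by $C^\infty_c$ functions in $CH^{\frac12}$ and the estimate is bilinear, $ac$ belongs to the closure $H^{\frac12}_0$. A smooth $a$ can be handled the same way, so no approximation of $a$ is needed.

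With this in hand, one defines
\begin{equation*}
    \mathcal{W}(\alpha,\beta)[\psi]:=\sum \langle b,\, a\,c\,m\rangle,
\end{equation*}
using the duality between $H^{-\frac12}$ and $H^{\frac12}_0$, summed over components and charts. The algebra estimate gives
\begin{equation*}
    |\mathcal{W}(\alpha,\beta)[\psi]|\le C\|\alpha\|_{CH^{\frac12}}\|\beta\|_{H^{-\frac12}}\|\psi\|_{CH^{\frac12}},
\end{equation*}
so $\mathcal{W}(\alpha,\beta)$ is a bounded functional on $CH^{\frac12}_0(U;\Lambda^{n-k-l}T^*M)$, that is, an element of $CH^{-\frac12}$, and the map is bounded and bilinear. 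When $\alpha$ is smooth and $\beta\in\mathcal{D}^l(U)$, the pairing is the integral of $a b c m$, which recovers $\int_U\alpha\wedge\beta\wedge\psi$. The construction of $\mathcal{I}$ is identical. In coordinates, $\iota_X\alpha$ is bilinear in the components of $X$ and $\alpha$, and the result is paired against $\psi$.

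Uniqueness follows from density. $C^\infty(\overline U)$ is dense in $CH^{\frac12}(U)$, since $CH^{\frac12}$ is defined via completions, or one can mollify after extension. $\mathcal{D}^l(U)$ is dense in $H^{-\frac12}(U;\Lambda^lT^*M)$, which is the fact recalled in \S\ref{sec:func_spaces}. A bounded bilinear map is determined by its values on a product of dense subsets. Independence from the choice of atlas and partition of unity also follows from uniqueness, because any two constructions agree on smooth data.

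The main obstacle I expect is justifying the duality step cleanly. One must check that elements of $H^{-\frac12}(U)$ indeed act on $H^{\frac12}_0$ functions of the product form $a c m$, and must handle the distinction between $H^{\frac12}_0$ and $H^{\frac12}_{00}$ near $\partial U$. I would handle this by using that $c$ is a limit of compactly supported smooth functions, so $ac$ is an $H^{\frac12}$ limit of compactly supported $H^{\frac12}\cap C^0$ functions. Those can be mollified into $C^\infty_c$ functions, giving $ac\in H^{\frac12}_0$ directly without appealing to any trace theory.
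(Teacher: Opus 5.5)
Your proposal is correct and follows the paper's argument. Localize to a chart by a partition of unity, use that $C^0\cap H^{\frac12}$ is an algebra to get $\|\alpha\wedge\psi\|_{H^{\frac12}}\le C\|\alpha\|_{CH^{\frac12}}\|\psi\|_{CH^{\frac12}}$, and pair with $\beta\in H^{-\frac12}$ by duality.

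The only difference is that you define $\mathcal{W}$ and $\mathcal{I}$ directly by duality for all data and use density only for uniqueness, while the paper proves the bound on smooth data and extends by continuity; both routes work. One correction: $CH^{\frac12}$ is defined as an intersection, not a completion, so the density of $C^\infty(\overline U)$ you need for uniqueness must come from your extension-then-mollify argument, with an extension bounded simultaneously in $C^0$ and $H^{\frac12}$.
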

    To lighten up the notation, we adopt the following convention:
    \begin{enumerate}[(i)]
        \item If $\alpha \in CH^{\frac12}(U;\Lambda^k T^*\R^n)$ and $\beta \in H^{-\frac12}(U;\Lambda^l T^*\R^n)$, we will write $\alpha \wedge \beta$ for $\mathcal{W}(\alpha, \beta)$. 
        \item If $\alpha \in H^{-\frac12}(U;\Lambda^l T^*\R^n)$ and $\beta \in CH^{\frac12}(U;\Lambda^k T^*\R^n)$, we will write $\alpha \wedge \beta$ for $-\mathcal{W}(\beta,\alpha)$.
        \item If $X \in CH^{\frac12}(U;T\R^n)$, $\alpha \in H^{-\frac12}(U;\Lambda^k T^*\R^n)$, we will write $\iota_X \alpha$ for $\mathcal{I}(X,\alpha)$.
    \end{enumerate}
    
    \begin{proof}
        By a standard argument using a partition of unity, it suffices to consider the case where $U$ is contained in a chart. Fix any $\alpha \in C^\infty(\overline U; \Lambda^kT^*M), \beta \in \mathcal{D}^l(U)$, and $\psi \in \mathcal{D}^{n-k-l}(U)$. Then,
        \begin{align*}
            \Big| \int_{U} \alpha \wedge \beta \wedge \psi\Big| \leq \|\beta\|_{H^{-\frac12}(U;\Lambda^l T^*M)} \|\alpha\wedge \psi\|_{H^\frac12(U;\Lambda^{n-l}T^*M)}.
        \end{align*}
        Since the space $CH(U)$ of functions on the chart $U$ is an algebra and the norms are defined using a fixed choice of local coordinates, there holds
        \begin{align*}
            \|\alpha \wedge \psi\|_{H^\frac12(U;\Lambda^{n-l}T^*M)} 
            &\leq C\|\alpha\|_{CH^\frac12(U;\Lambda^k T^*M)}\|\psi\|_{CH^\frac12(U;\Lambda^{n-k-l}T^*M)},
        \end{align*}
        where $C$ depends on $U$, $k$, $l$, and the choice of coordinates. This completes the proof for the wedge product; that for the interior product is analogous.
    \end{proof}

    Finally, we discuss pullback and its interaction with the interior product and exterior derivative. Let $U, U' \subset \R^n$ be smooth bounded domains and $\Psi : \overline U \to \overline U'$ a bijection in $C^1(\overline U;\R^n) \cap H^\frac32(U;\R^n)$ with inverse $\Phi \in C^1(\overline U';\R^2) \cap H^\frac32(U';\R^n)$. Let $1 \leq k \leq n$ and $\alpha \in H^{-\frac12}(U;\Lambda^k T^*\R^n)$. Then, we define the pullback $\Phi^*\alpha \in CH^{-\frac12}(U;\Lambda^k T^*\R^n)$ by
        \begin{equation*}
            (\Phi^*\alpha) [ \phi ] := \alpha [\Psi^* \phi], \quad \phi \in CH^\frac12_0(U'; \Lambda^{n-k}T^*\R^n).
    \end{equation*}

    \begin{lemma} \label{lemma:pullback_commutation}
        Let $U,U',\Psi, \Phi$ be as above with $n=2$. Let $X \in CH^\frac12(U;T\R^2)$ and $\alpha \in H^\frac12(U;T^*\R^2)$. Then, with products defined in the sense of Proposition \ref{prop:WeakProducts}, the following equality holds in $CH^{-\frac12}(U';\Lambda^2 T^* \R^2)$:
        \begin{equation*}
            \Phi^*(\iota_{X}d\alpha) = \iota_{(\Psi_*X)} d(\Phi^*\alpha).
        \end{equation*}
    \end{lemma}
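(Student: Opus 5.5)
Our plan is to prove the identity by approximation, using the boundedness statements of Proposition \ref{prop:WeakProducts} and the fact that both sides are continuous in the data. Note first how the hypotheses enter. Since $\alpha \in H^{\frac12}$, $d\alpha \in H^{-\frac12}(U;\Lambda^2T^*\R^2)$, so $\iota_X d\alpha \in CH^{-\frac12}(U;\Lambda^1 T^*\R^2)$ by Proposition \ref{prop:WeakProducts}(ii). On the other side, $\Phi^*\alpha$ has coefficients $(\alpha_j\circ\Phi)\,\partial_i\Phi^j$. Here $\alpha_j\circ\Phi \in H^{\frac12}$ by Lemma \ref{lemma:comp_diffeo_wsp}, and $\partial_i\Phi^j \in CH^{\frac12}$ since $\Phi \in C^1\cap H^{\frac32}$.

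\textbf{Step 0: make sense of the right-hand side.} The product of an $H^{\frac12}$ function with a $CH^{\frac12}$ function lies in $H^{\frac12}$ only with some care. I would instead interpret $\Phi^*\alpha$ as a distribution for which $d(\Phi^*\alpha) \in H^{-\frac12}$ makes sense. The cleanest route is the smooth identity $d(\Phi^*\alpha) = \Phi^*(d\alpha)$, which in dimension two reads $(d\alpha_{12}\circ\Phi)\det D\Phi$. I would verify this identity distributionally by approximation. Similarly, $\Psi_*X = (D\Psi\, X)\circ\Phi \in CH^{\frac12}(U';T\R^2)$, by Lemma \ref{lemma:comp_diffeo_wsp} and the algebra property of $CH^{\frac12}$.

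\textbf{Step 1: the smooth case.} For smooth $\Phi$, $X$ and $\alpha$, the identity is the classical naturality of $d$ and $\iota$ under diffeomorphisms, $\Phi^*(\iota_X\beta) = \iota_{\Psi_*X}\Phi^*\beta$ and $\Phi^*d = d\Phi^*$. I would test both sides against $\phi \in \mathcal{D}^1(U')$ to fix the sign conventions coming from the current formalism.

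\textbf{Step 2: approximate $X$ and $\alpha$.} Approximate $X$ by $X_\epsilon \to X$ in $CH^{\frac12}$ and $\alpha$ by $\alpha_\epsilon \to \alpha$ in $H^{\frac12}$, keeping $\Phi$ and $\Psi$ fixed. The left side converges because $\Phi^*$ is bounded from $CH^{-\frac12}(U)$ to $CH^{-\frac12}(U')$. This boundedness holds since $\Psi^*$ maps $CH^{\frac12}_0(U')$ to $CH^{\frac12}_0(U)$, using $D\Psi \in CH^{\frac12}$, Lemma \ref{lemma:comp_diffeo_wsp} and the algebra property. The right side converges by bilinear continuity in Proposition \ref{prop:WeakProducts}(ii).

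\textbf{Step 3: approximate the diffeomorphism.} This reduces the problem to smooth $X,\alpha$ with only $C^1\cap H^{\frac32}$ diffeomorphisms. Mollify $\Psi$ to $\Psi_\epsilon \to \Psi$ in $C^1\cap H^{\frac32}$ on compact subsets. For $\epsilon$ small these are diffeomorphisms near the support of the test form, with inverses $\Phi_\epsilon \to \Phi$ in $C^1\cap H^{\frac32}$ by Lemmas \ref{lemma:inverse_wsp} and \ref{lemma:composition_continuity}. Both sides then converge when tested against fixed $\phi$. Every term is an integral of a smooth form composed with $\Phi_\epsilon$, multiplied by products of first derivatives of $\Phi_\epsilon$ or $\Psi_\epsilon$, so uniform $C^1$ convergence suffices. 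The one exception is the derivative hitting $D\Phi_\epsilon$ inside $\Phi_\epsilon^*\alpha$ in $d(\Phi_\epsilon^*\alpha)$. There I would use that in two dimensions the $\alpha_j\,d\Phi^j$ structure gives $d(\alpha_j\, d\Phi^j) = d\alpha_j\wedge d\Phi^j$, so second derivatives of $\Phi$ cancel (the null structure $d^2\Phi^j=0$).

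\textbf{Main obstacle.} I expect the main difficulty to be Step 0 together with the second half of Step 3: showing rigorously that $d(\Phi^*\alpha)$ is a well-defined element of $H^{-\frac12}$ (or at least $CH^{-\frac12}$) depending continuously on $\Phi$. My first attempt would be to define it as $d\alpha_j\circ\Phi \wedge d\Phi^j$ via the Jacobian-type structure and appeal to the $H^{\frac12}$ Jacobian bounds of Brezis--Nguyen, which are stable under the relevant convergences.
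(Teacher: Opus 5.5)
Your proposal is correct. It shares the paper's skeleton: approximate $X$ and $\alpha$ by smooth fields with $\Phi$ fixed, use classical naturality for smooth data together with $d(\Phi^*\beta)=\Phi^*(d\beta)$ (which rests on exactly the cancellation $d(d\Phi^j)=0$ you point out), and pass to the limit via Proposition \ref{prop:WeakProducts}. Two sub-steps are organised differently.

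\emph{Your Step 3 is valid but dispensable.} Approximating the diffeomorphism itself forces you to control the invertibility of $\Psi_\epsilon$ and the convergence of its inverses. The paper avoids this. For smooth $\beta$ it writes $\Phi^*\beta=\beta_i(\Phi)\,d\Phi^i$ and replaces only the factor $d\Phi^i$ by $d\Phi^i_\epsilon$, with $\Phi_\epsilon\to\Phi$ in $C^1$ and no invertibility needed. Then $d\big(\beta_i(\Phi)\,d\Phi^i_\epsilon\big)=\partial_j\beta_i(\Phi)\,d\Phi^j\wedge d\Phi^i_\epsilon\to\Phi^*(d\beta)$. The identity $\Phi^*(\iota_X\gamma)=\iota_{\Psi_*X}\Phi^*\gamma$ is pointwise linear algebra and holds for any $C^1$ diffeomorphism.

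\emph{Your Step 0 is more careful than the paper at one point.} On the right-hand side, the paper passes to the limit by asserting $\Phi^*\alpha_\epsilon\to\Phi^*\alpha$ in $H^{\frac12}(U';T^*\R^2)$. This involves multiplying $\alpha_j\circ\Phi\in H^{\frac12}$ by $\partial_i\Phi^j\in CH^{\frac12}$. That is clearly fine when $\alpha\in CH^{\frac12}$, which is the only case used (in Proposition \ref{prop:pullback_Cartan}). Your duality definition $\Phi^*(d\alpha)[\psi]:=d\alpha[\psi\circ\Psi]$ avoids the product altogether:
\begin{itemize}
\item it lies in $H^{-\frac12}(U';\Lambda^2T^*\R^2)$ and depends continuously on $\alpha$ by Lemma \ref{lemma:comp_diffeo_wsp} alone;
\item it coincides with $d(\Phi^*\alpha)$, because both are $\mathcal{D}'$-limits of the smooth case (note that $\Phi^*\alpha_\epsilon\to\Phi^*\alpha$ in $L^2$).
\end{itemize}
This gives the lemma as stated for $\alpha\in H^{\frac12}$ without any product estimate.

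\emph{The obstacle you anticipate does not arise.} Continuity in $\Phi$ is needed only for smooth $\alpha$. There $d(\Phi_\epsilon^*\alpha)=\partial_k\alpha_j(\Phi_\epsilon)\,d\Phi_\epsilon^k\wedge d\Phi_\epsilon^j$ converges uniformly, so no Brezis--Nguyen Jacobian bounds are required.
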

    \begin{proof}
        Let $\phi \in \mathcal{D}^1(U')$ be an arbitrary test one-form on $U'$. Then, for any sequences $\{X_\epsilon\} \subset C^\infty(\overline U;T\R^2)$ with $X_\epsilon \to X$ in $CH^\frac12(U;T\R^2)$ and $\{\alpha_\epsilon\} \subset C^\infty(\overline U;T^*\R^2)$ with $\alpha_\epsilon \to \alpha$ in $H^\frac12(U;T^*\R^2)$, we have by Proposition \ref{prop:WeakProducts} that
        \begin{align} 
                (\Phi^*(\iota_X d\alpha))[\phi] &= \iota_X d\alpha[\Psi^*\phi]\nonumber\\
                    &= \lim_{\epsilon \searrow 0} \int_U \big( \iota_{X_\epsilon}d\alpha_\epsilon\big) \wedge \Psi^*\phi\nonumber\\
                    &= \lim_{\epsilon \searrow 0} \int_U \Psi^*\Big( \Phi^*\big( \iota_{X_\epsilon}d\alpha_\epsilon\big) \wedge \phi\Big).
                    \label{eq:pullback_before_exchange}
        \end{align}
        Since, $X_\epsilon$ and $\alpha_\epsilon$ are smooth, the equality
        \begin{equation*}
            \Phi^*(\iota_{X_\epsilon} \alpha_\epsilon) = \iota_{\Psi_*X_\epsilon} (\Phi^*d\alpha_\epsilon),
        \end{equation*}
        holds classically (both sides are at least continuous). At this point we would like to compute $\Phi^*(d\alpha_\epsilon)$. For this, we claim  that
        \begin{equation} \label{eq:pullback_d_commute}
            \Phi^*(d\beta) = d(\Phi^*\beta) \quad \text{for any } \beta \in C^\infty(U;T^*\R^2)
        \end{equation}
        and both sides\footnote{A priori we only know that $\Phi^*\beta$ and $\Phi^*(d\beta)$ are $CH^\frac12$.} of the equation therefore belong to $CH^\frac12(U';\Lambda^2 T^* \R^2)$. Supposing this claim, we continue the proof of the lemma. By \eqref{eq:pullback_d_commute} we have 
        $\Phi^* d\alpha_\epsilon = d(\Phi^*\alpha_\epsilon)$ and thus $\Phi^*(\iota_{X_\epsilon} \alpha_\epsilon) = \iota_{\Psi_*X_\epsilon} d(\Phi^*\alpha_\epsilon)$. Inserting this into \eqref{eq:pullback_before_exchange} we have
        \begin{align*}
            (\Phi^*(\iota_X d\alpha))[\phi] &= \lim_{\epsilon \searrow 0} \int_U \Psi^* \big( \iota_{\Psi_*X_\epsilon}d(\Phi^*\alpha_\epsilon) \wedge \phi\big)\\
            &= \lim_{\epsilon \searrow 0} \int_{U'} \iota_{\Psi_*X_\epsilon}d(\Phi^*\alpha_\epsilon) \wedge \phi.
        \end{align*}
        Lemma \ref{lemma:comp_diffeo_wsp} implies that $\Psi_*X_\epsilon \to \Psi_*X$ in $CH^\frac12(U;T\R^2)$ and $\Phi^*\alpha_\epsilon \to \Phi^*\alpha$ in $H^\frac12(U;T^*\R^2)$. It follows from Proposition \ref{prop:WeakProducts} that $\iota_{\Psi_*X_\epsilon} d(\Phi^*\alpha_\epsilon) \to \iota_{\Phi_*X} d(\Phi^*\alpha)$ in $CH^{-\frac12}(U';T^*\R^2)$. Therefore the lemma will be proved once the claim \eqref{eq:pullback_d_commute} is proved.
        
        To prove the claim \eqref{eq:pullback_d_commute}, we write $\beta = \beta_i dx^i$, where $\beta_i$ are smooth functions. Then
        \begin{equation*}
            \Phi^*\beta = \beta_i(\Phi)d\Phi^i,
        \end{equation*}
        and, considering any smooth sequence $\Phi_\epsilon \to \Phi$ in $C^1(\overline U';\R^2)$, we have in the sense of distributions that 
        \begin{equation*}
            d(\Phi^*\beta) = \lim_{\epsilon\searrow 0} d\Big(\beta_i(\Phi)d\Phi^i_\epsilon\Big) = \lim_{\epsilon\searrow 0} \partial_j \beta_i(\Phi) d\Phi^j \wedge d\Phi^i_{\epsilon} = \Phi^*(d\beta),
        \end{equation*}
        as claimed. This concludes the proof.
    \end{proof}

\subsection{Lifting of certain $\mathbb{S}^1$-valued maps} \label{sec:lifting}

We recall a simple existence result for liftings of $\mathbb{S}^1$ valued maps. For a comprehensive treatment of circle-valued maps and the lifting problem, see the monograph of Brezis and Mironescu \cite{brezis_mironescu_21}. Because we restrict ourselves to continuous maps, the existence and uniqueness for the lifting follows from more elementary arguments.
\begin{proposition} \label{prop:lifting}
Let $U \subset \R^n$ be a bounded Lipschitz simply connected domain, and $u \in C(\overline{U}; \mathbb{S}^1)$, and $\theta \in C(\overline{U};\R)$ a lifting\footnote{Such lifting exists uniquely modulo $2\pi \mathbb{Z}$; see, e.g., \cite[Lemma 1.1]{brezis_mironescu_21}.} such that $u = (\cos\theta, \sin\theta)$. If $u \in W^{s,p}(U; \mathbb{S}^1)$ with $s \in (0,1)$ and $p \in [1,\infty)$, then $\theta \in W^{s,p}(U)$.
\end{proposition}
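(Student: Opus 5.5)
The idea is to exploit the continuity of $u$: at small scales $\theta$ varies little, so the $W^{s,p}$ Gagliardo integrand of $\theta$ near the diagonal is comparable to that of $u$. Far from the diagonal we only need $\theta \in L^\infty$, which holds because $\theta$ is continuous on the compact set $\overline U$.

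The local comparison comes from uniform continuity. Since $\theta \in C(\overline U)$ and $\overline U$ is compact, there is $\delta > 0$ such that $|x-y| < \delta$ implies $|\theta(x)-\theta(y)| < \pi$. For real $a$ with $|a| \le \pi$ we have $|e^{ia}-1| = 2|\sin(a/2)| \ge \frac{2}{\pi}|a|$. Therefore
\begin{equation*}
|\theta(x)-\theta(y)| \le \frac{\pi}{2}|u(x)-u(y)| \quad \text{whenever } |x-y|<\delta .
\end{equation*}

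Next I split the Gagliardo seminorm $[\theta]_{W^{s,p}(U)}^p$ into the regions $\{|x-y|<\delta\}$ and $\{|x-y|\ge\delta\}$.

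On the first region, the inequality above bounds the integrand pointwise by $(\pi/2)^p$ times the corresponding integrand for $u$. This part is therefore at most $(\pi/2)^p [u]_{W^{s,p}(U)}^p < \infty$.

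On the second region, I use $|\theta(x)-\theta(y)| \le 2\|\theta\|_{L^\infty}$ and $|x-y|^{-(n+sp)} \le \delta^{-(n+sp)}$. This part is bounded by $(2\|\theta\|_\infty)^p \delta^{-(n+sp)} |U|^2$, which is finite since $U$ is bounded.

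Finally, $\theta \in L^\infty(U) \subset L^p(U)$ because $U$ is bounded, so $\theta \in W^{s,p}(U)$.

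There is a definitional subtlety: the paper defines $W^{s,p}$ as the completion of smooth functions under the norm. On a bounded Lipschitz domain this coincides with the space of $L^p$ functions with finite Gagliardo seminorm, by the standard density result, so the finite-seminorm characterization is enough. This identification is the only point I expect to need comment; the estimate itself is elementary.
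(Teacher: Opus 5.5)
Your proof is correct, and it takes a different, more elementary route than the paper. The paper covers $\mathbb{S}^1$ by the arcs $A_\pm=\{z\in\mathbb{S}^1:|z\mp1|>\frac12\}$ and writes $\theta=\log_\pm(u)$ on $U_\pm:=u^{-1}(A_\pm)$ using smooth branches of the logarithm. It gets $\theta\in W^{s,p}(U_\pm)$ from the Lipschitz composition result (Lemma \ref{lemma:composition_continuity}). It then splits $U$ into $U_+\setminus U_-$, $U_+\cap U_-$ and $U_-\setminus U_+$. The only pairs not already controlled lie in $(U_+\setminus U_-)\times(U_-\setminus U_+)$, where $|u(x)-u(y)|\ge1$. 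Uniform continuity of $u$ keeps these pairs at least a fixed distance $\delta>0$ apart, and an $L^\infty$ bound finishes.

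You instead use uniform continuity of $\theta$ itself together with the chord--arc inequality $|e^{ia}-1|\ge\frac{2}{\pi}|a|$ for $|a|\le\pi$. This gives the pointwise comparison $|\theta(x)-\theta(y)|\le\frac{\pi}{2}|u(x)-u(y)|$ for every pair with $|x-y|<\delta$. No convexity of $U$ is needed, since uniform continuity is with respect to Euclidean distance. The remaining pairs are handled by the same crude $L^\infty$ bound.

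What your version gains:
\begin{enumerate}[(i)]
\item It is self-contained: no branches of $\log$ and no composition lemma.
\item It yields the explicit estimate $[\theta]^p_{W^{s,p}(U)}\le(\frac{\pi}{2})^p[u]^p_{W^{s,p}(U)}+(2\|\theta\|_{L^\infty})^p\delta^{-(n+sp)}|U|^2$.
\item It avoids a point the paper passes over quickly. On $U_\pm$ one only has $\theta=\log_\pm(u)+2\pi k$ with $k$ locally constant, and $k$ can differ between components of $U_\pm$. So the paper's claim $\theta\in W^{s,p}(U_\pm)$ actually needs a small uniform-continuity argument of exactly your kind.
\end{enumerate}

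The paper's route fits the standard local-lifting framework of Brezis--Mironescu and reuses a lemma already stated. Both arguments rely on identifying $W^{s,p}(U)$ with the $L^p$ functions of finite Gagliardo seminorm on a bounded Lipschitz domain, which you rightly flag as standard.
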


\begin{proof}
    Cover $\mathbb{S}^1$ by two arcs $A_{\pm} := \{z \in \mathbb{S}^1 : |z\mp1|> \frac12\}$. Let $U_\pm := \{x \in U : u(x) \in A_\pm\}$. Then $\{U_-, U_+\}$ is an open cover for $U$, and
    \begin{equation*}
        \theta = \log_\pm(u) \quad \text{in } U_\pm
    \end{equation*}
    for some smooth branches $\log_\pm$ of the complex logarithm defined on $\mathbb{C}\setminus \R_\pm \supset \overline{A}_\pm$, where we have identified $\R^2 \simeq \mathbb{C}$. Since $u$ takes values there in $U_\pm$, we have $\theta \in W^{s,p}(U_\pm)$ by Lemma \ref{lemma:composition_continuity}. 
    
    We proceed to check that $\theta \in W^{s,p}(U)$, namely that $[u]_{W^{s,p}(U)} < \infty$. Split $U = U_1 \cup U_2 \cup U_3$ into three disjoint sets, where $U_1 := U_+ \setminus U_-$, $U_2 := U_+ \cap U_-$, and $U_3 := U_- \setminus U_+$. Accordingly, we split the integration over $U \times U$ in the definition of the $W^{s,p}(U)$ seminorm of $\theta$ into integration over sets of the form $U_i \times U_j$. All such integrals except for the ones over $U_1 \times U_3$ and $U_3 \times U_1$ are bounded by the seminorms $[\theta]_{W^{s,p}(U_+)}$ or $[\theta]_{W^{s,p}(U_-)}$, so it remains to bound the integral over $U_1 \times U_3$ (the other is the same). To this end it suffices to show that there exists $\delta > 0$ such that
    \begin{equation*}
    |x-y| \geq \delta \quad \forall x \in U_1, y\in U_3.
    \end{equation*}
    Observe that if $x \in U_1$, then $x \notin U_-$ and hence $|u(x)+1| \leq \frac12$. Likewise, if $y \in U_3$, then $|u(y) - 1| \leq \frac12$. It follows that $|u(x)-u(y)| \geq 1$ for all $x \in U_1$ and $y \in U_3$. The existence of $\delta$ follows from the uniform continuity of $u$ on $\overline U$.
\end{proof}

\section{Cartan's equations: Proof of Theorem \ref{thm:cartan_main}} \label{sec:proof_cartan}

We are now in a position to give the proof of Theorem \ref{thm:cartan_main}. We start with the following lemma relating a rough orthonormal frame to a smooth one via lifting.

\begin{lemma} \label{lemma:lifting_frame}
     Let $U \subset \Sigma$ be a smooth, simply connected chart on a smooth surface $(\Sigma, g)$, and $s \in (0,1)$, $p \in [1,\infty)$. Suppose that $\{\eta^i\}_{i=1}^2 \subset W^{s,p}(U; T^* \R^2) \cap C^0(\overline U; T^* \R^2)$ and $\{\hat\eta^i\}_{i=1}^2 \subset C^\infty(\overline U; T^*\R^2)$ are two orthonormal coframes on $(U,g)$ with the same orientation\footnote{Orientation is well-defined due to the continuity assumption.}. Then, there exists a lifting $\theta \in W^{s,p}(U) \cap C^0(\overline U)$, unique modulo $2\pi$, such that
     \begin{equation} \label{eq:lifting_rel}
        \begin{cases}
            \eta^1 &= \cos\theta ~\hat\eta^1 + \sin\theta~ \hat\eta^2\\
            \eta^2 &= -\sin\theta ~\hat\eta^1 + \cos\theta~ \hat\eta^2.
        \end{cases}
     \end{equation}
\end{lemma}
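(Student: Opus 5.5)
The plan is to encode the rough coframe as a continuous $\mathbb{S}^1$-valued map in the smooth coframe's coordinates, and then invoke Proposition \ref{prop:lifting}.

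First, define the two component functions
\begin{equation*}
    a := g^{-1}(\eta^1,\hat\eta^1) = \eta^1(\hat e_1), \qquad b := \eta^1(\hat e_2),
\end{equation*}
where $\{\hat e_i\}$ is the smooth frame dual to $\{\hat\eta^i\}$.

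Since $\hat e_i$ is smooth up to the boundary and $\eta^1 \in W^{s,p}\cap C^0$, we get $a,b \in W^{s,p}(U)\cap C^0(\overline U)$. Continuity is immediate. For the $W^{s,p}$ bound, in the fixed chart we write $a = \sum_j \eta^1_j \hat e_1^j$. A product of a $W^{s,p}\cap L^\infty$ function with a smooth (hence Lipschitz, bounded) function lies in $W^{s,p}$: split $|fh(x)-fh(y)| \le |f(x)-f(y)||h(x)| + |f(y)||h(x)-h(y)|$, and the second term is controlled by $\|f\|_{L^\infty}\|Dh\|_\infty |x-y|$, which is integrable against $|x-y|^{-n-sp}$ on bounded $U$ since $s<1$.

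Orthonormality of $\eta^1$ gives $a^2+b^2 = |\eta^1|_g^2 = 1$. Hence $u := (a,b) \in C(\overline U;\mathbb{S}^1)\cap W^{s,p}(U;\mathbb{S}^1)$.

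Next we identify $\eta^2$. Since $\eta^2$ is a unit covector orthogonal to $\eta^1$, at each point
\begin{equation*}
    \eta^2 = \sigma(-b\,\hat\eta^1 + a\,\hat\eta^2), \qquad \sigma(x) \in \{\pm1\}.
\end{equation*}
The function $\sigma = \det(\eta^i(\hat e_j))$ is continuous, so it is constant on the connected set $U$. The assumption of equal orientation forces $\sigma \equiv 1$.

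Since $U$ is simply connected, the lifting result (as cited in the footnote to Proposition \ref{prop:lifting}) gives $\theta \in C(\overline U;\R)$ with $(a,b)=(\cos\theta,\sin\theta)$, unique modulo $2\pi$. Here I will also tacitly use that $U$ is Lipschitz, being a smooth chart, and that the chart domain is connected. Proposition \ref{prop:lifting} then gives $\theta\in W^{s,p}(U)$. Substituting $a=\cos\theta$, $b=\sin\theta$ into the expressions for $\eta^1$ and $\eta^2$ yields \eqref{eq:lifting_rel}.

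Uniqueness modulo $2\pi$ is argued as follows. Any two continuous liftings differ by a continuous function with values in $2\pi\mathbb{Z}$, and on connected $U$ such a function is constant.

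The only mildly delicate step is the product estimate showing $a,b\in W^{s,p}$. It is routine, as sketched above. A minor additional point is that $\overline U$ should be handled within the chart, so that $\hat e_i$ is smooth up to the boundary, which the hypotheses provide.
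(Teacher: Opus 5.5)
Your proposal is correct and follows essentially the same route as the paper: write $\eta^1$ in the smooth coframe, observe that the coefficient map is a continuous $W^{s,p}$ map into $\mathbb{S}^1$, and apply Proposition~\ref{prop:lifting}. The paper gets the coefficients by inverting the matrix $(\hat\eta^i_j)$ and you get them by pairing with the dual frame $\hat e_i$, which is the same thing; your explicit product estimate and orientation argument for $\eta^2$ fill in details the paper leaves implicit (it only records $\det(\eta^i_j)=\det(\hat\eta^i_j)$).
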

\begin{proof}
    In local coordinates we have
    \begin{equation*}
    \eta^i = \eta^i_j dx^j,\quad \hat\eta^i = \hat\eta^i_j dx^j
    \end{equation*}
    for $\eta^i_j \in W^{s,p}(U) \cap C^0(\overline U)$, $\hat\eta^i_j \in C^\infty(\overline U)$ with the orthonormality relation
    \begin{equation*}
        g^{kl} \eta^i_k \eta^j_l = \delta_{ij} = g^{kl} \hat\eta^i_k \hat\eta^j_l \quad \forall i, j \in \{1,2\}.
    \end{equation*}
    In particular, $\det(\hat{\eta}^i_j) = \det(\eta^i_j) = \sqrt{\det(g_{ij})}$. Since at each point in $\overline U$, $\hat\eta^1, \hat\eta^2$ form a basis for the cotangent space, we can write
    \begin{equation*}
        \eta^1 = u_1\hat\eta^1 + u_2\hat\eta^2.
    \end{equation*}
    The map $u : \overline U \to \R^2$ takes values in $\mathbb{S}^1$ as $|\eta^1|_g = 1$ and $\hat\eta^1,\hat\eta^2$ are orthonormal with respect to $g$. Moreover, $u$ is given by
    \begin{equation*}
        u = (\hat\eta^i_j)^{-1}(\eta^1_1,\eta^1_2)^T
    \end{equation*}
    which is well-defined since $\det(\hat\eta^i_j) = \sqrt{\det(g_{ij})} \geq \epsilon > 0$ on $U$, and hence
    \begin{equation*}
        u \in W^{s,p}(U;\mathbb{S}^1) \cap C^0(\overline U; \mathbb{S}^1).
    \end{equation*}
    The conclusion follows from Proposition \ref{prop:lifting}.
\end{proof}

Before giving the proof of Theorem \ref{thm:cartan_main} we recall our convention for the wedge product and interior product of rough differential forms following Proposition \ref{prop:WeakProducts}.
    
\begin{proof}[Proof of Theorem \ref{thm:cartan_main}]
    By Proposition \ref{prop:WeakProducts}, $\omega$ is globally well-defined via \eqref{eq:conn_form_formula} and belongs to $CH^{-\frac12}_{loc}(U;T^*M)$. Since distributions are local, it suffices to consider the case that $U$ is a smooth simply connected chart and $\{\eta^i\}_{i=1}^2 \subset CH^\frac12(U;T^*\R^2)$, where we need to show that $\omega$ belongs to $H^{-\frac12}(U;T^*\R^2)$, is the unique solution to \eqref{eq:cartan1} in $H^{-\frac{1}{2}}(U,T^*\R^2)$ in the sense of Proposition \ref{prop:WeakProducts}, and satisfies \eqref{eq:cartan2} in the sense of distributions on $U$.

    First, we show uniqueness in $H^{-\frac12}(U;T^*\R^2)$. Suppose that $\omega_1, \omega_2 \in H^{-\frac12}(U;T^*\R^2)$ are two solutions. Then $\beta := \omega_1 - \omega_2 \in H^{-\frac12}(U;T^*\R^2)$ satisfies
    \begin{equation*}
        0 = (\beta \wedge \eta^i)[\phi] = \beta[\phi\eta^i] \quad \forall \phi \in CH^\frac12_0(U), \quad i = 1,2.
    \end{equation*}
    Take $\psi \in \mathcal{D}^1(U)$. Then, $\psi = \sum_{i=1}^2 \phi_i \eta^i$, where $\phi_i := g(\eta^i, \psi) \in CH^\frac12_0(U)$. Hence, $\beta[\psi] = \sum_{i=1}^2 \beta[\phi_i \eta^i] = 0$. Since $\mathcal{D}^1(U)$ is dense in $H^{\frac12}_0(U;T^*\R^2)$ and $\beta$ belongs to $H^{-\frac12}(U;T^*\R^2)$, it follows that $\beta = 0$.
    
    Next, fix any smooth orthonormal coframe $\hat\eta^i \in C^\infty(\overline U; T^* \R^2)$, which exists as $g$ is smooth and $U$ is a chart. Since $U$ is simply connected, we can apply Lemma \ref{lemma:lifting_frame} to obtain a lifting $\theta \in C^0(\overline U) \cap H^\frac12(U)$ so that \eqref{eq:lifting_rel} holds. We claim that
    \begin{equation} \label{eq:omega_equals_lifting_exprn}
        \omega = \hat\omega - d\theta \quad \text{in } CH^{-\frac12}(U;T^*\R^2),
    \end{equation}
    where $\hat\omega$ is the connection form of the smooth frame. Once this is proved, it is immediate that $\omega \in H^{-\frac{1}{2}}(U,T^*\R^2)$. Moreover, since the equation $d\hat \omega = K_g dvol_g$ holds classically and the identity $d^2 = 0$ holds for distributions, the second structural equation \eqref{eq:cartan2} follows from \eqref{eq:omega_equals_lifting_exprn}.
    
    To prove the claim \eqref{eq:omega_equals_lifting_exprn}, we introduce the following regularising sequence. Take an extension $\theta \in (C_c\cap H^\frac12)(\R^2)$ and let $\theta_\epsilon := \theta * \rho_\epsilon$ be its convolution with a standard mollifier kernel. Define
    \begin{equation} \label{eq:eta_eps_def}
    \begin{split}
        {\eta}^1_{(\epsilon)} &:= \cos\theta_\epsilon\, \hat\eta^1 + \sin\theta_\epsilon\, \hat\eta^2,\\
        {\eta}^2_{(\epsilon)} &:= -\sin\theta_\epsilon\, \hat\eta^1 + \cos\theta_\epsilon\, \hat\eta^2.
    \end{split}
    \end{equation}
    Then, $\{\eta^i_{(\epsilon)}\}_{i=1}^2$ is a smooth orthonormal coframe on $(U,g)$. Moreover, a direct computation shows that its connection $1$-form is
    \begin{equation*}
        \omega_{(\epsilon)} = \hat\omega - d\theta_\epsilon,
    \end{equation*} 
    which satisfies Cartan's structural equations classically. By Lemma \ref{lemma:composition_continuity} with $f = \sin$ or $\cos$ and the convergence $\theta_\epsilon \to \theta$ in $C^0(\overline U) \cap H^\frac12(U)$, we have
    \begin{equation} \label{eq:approx_frame_cartan_conv}
        \eta^i_{(\epsilon)} \to \eta^i \text{ in } CH^\frac12(U;T^*\R^2).
    \end{equation}
    The dual frame $e_{i,(\epsilon)}$ likewise converges in $CH^\frac12(U;T\R^2)$ to $e_i$ as $\epsilon \searrow 0$. Hence, by Proposition \ref{prop:WeakProducts}, in the space $CH^{-\frac12}(U;T^*\R^2)$, there holds
    \begin{align*}
        \omega &= \lim_{\epsilon\searrow 0} \iota_{e_{2,(\epsilon)}}d\eta^1_{(\epsilon)} - \iota_{e_{1,(\epsilon)}}d\eta^2_{(\epsilon)}\\
        &= \lim_{\epsilon \searrow 0} (\hat\omega - d\theta_\epsilon)\\
        &= \hat\omega - d\theta.
    \end{align*}
  
    Finally, it remains to show that $\hat\omega - d\theta$ verifies Cartan's first structural equations. Recall that $d\eta^1_{(\epsilon)} = \eta^2_{(\epsilon)} \wedge \omega_{(\epsilon)} = \eta^2_{(\epsilon)} \wedge (\hat\omega - d\theta_\epsilon)$. We have, with limits in the sense of $CH^{-\frac12}(U;T^*\R^2)$, that:
    \begin{equation*} 
        \begin{split}
        d\eta^1  &= \lim_{\epsilon \searrow 0} d\eta^1_{(\epsilon)}\\
        &= \lim_{\epsilon \searrow 0} (\eta^2_{(\epsilon)} \wedge (\hat\omega - d\theta_\epsilon))\\
        &= \eta^2 \wedge (\hat\omega - d\theta)\\
        &= \eta^2 \wedge \omega,
        \end{split}
    \end{equation*}
    where we have used Proposition \ref{prop:WeakProducts} in the third equality (which applies since $\eta^2_{(\epsilon)} \to \eta^2$ in $CH^\frac12(U;T^*\R^2)$ and $\hat\omega-d\theta_{\epsilon} \to \hat\omega-d\theta$ in $H^{-\frac12}(U;T^*\R^2)$). This shows that $d\eta^1 = \eta^2 \wedge \omega$ as an equation in $CH^{-\frac12}(U;T^*\R^2)$ with the wedge product $\eta^2 \wedge \omega$ understood in the sense of Propsition \ref{prop:WeakProducts}. Likewise, $d\eta^2 = - \eta^1 \wedge \omega$. 
\end{proof}

\section{Gauss's equation: Proof of Theorems \ref{thm:Gauss_23} and \ref{thm:capacity}} \label{sec:gauss}

We now turn our attention to the Gauss equation. In \S \ref{sec:surfs_are_locally_graphical}, we consider the regularity of the local graphical representations of an embedded surface. In \S \ref{sec:proof_cartan_cor}, we prove Corollary \ref{cor:cartan_particular_frame} from Theorem \ref{thm:cartan_main}. Sections \ref{sec:W23_Gauss} and \ref{sec:BV_Gauss} are devoted to the proof of Theorem \ref{thm:Gauss_23} in cases (ii) and (i), respectively. In both cases, the result is deduced from Corollary \ref{cor:cartan_particular_frame} and an appropriate chain rule. Finally, in \S \ref{sec:capacity}, we prove Theorem \ref{thm:capacity}.

\subsection{The local parametrisation as a graph} \label{sec:surfs_are_locally_graphical}

Consider a $C^1$ isometric immersion $u : (U,g) \to \R^3$, where $g$ is a smooth Riemannian metric on a bounded Lipschitz domain $U \subset \R^2$. Pick some $x_0 \in U$, and composing with a rigid motion if necessary, we can assume that the unit normal at $x_0$ is $e_3=(0,0,1)$.
Decompose $u$ as
\begin{equation*}
u =: (\Psi,v)
\end{equation*}
where $\Psi = (u^1,u^2)$ is the in-plane component and $v=u^3$ the normal component. Then $D\Psi$ is invertible at $x_0$. To see this, note that as $u$ is an immersion, $Du$ has full rank, while at $x_0$
\begin{equation*}
    Dv(x_0) = e_3^T Du(x_0)  = 0.
\end{equation*}
Hence, by the inverse function theorem, and shrinking $U$ if necessary, we may assume that $\Psi : U \to U' \subset \R^2$ is a $C^1$ diffeomorphism onto its image. Furthermore, if $\Phi : U' \to U$ is the inverse of $\Psi$ and $f := v \circ \Phi$, then
\begin{equation*}
    u(U) = G_f(U'); \quad G_f(x) := (x,f(x)),
\end{equation*}
so that the embedded surface is locally a graph. As $u$ is isometric, we have
\begin{equation} \label{eq:gpr_from_f}
    g' := I+Df \otimes Df = G_f^*e = \Psi^*g
\end{equation}
where $e$ is the Euclidean metric on $\R^3$. We observe first that $f$ is as regular as $u$.
\begin{lemma} \label{lemma:f_smooth_as_u}
    Let $u : U \subset \R^2 \to \R^3$ satisfy $\partial_i u \cdot \partial_j u = g_{ij} \in C^\infty$. Assume that $u$ has regularity $C^1(\overline U; \R^3) \cap W^{1+s,p}(U;\R^3)$ or $c^{1,\alpha}\cap BV^2(U;\R^3)$. Parametrize the surface as a graph as above, so that $u(U) = G_f(U')$. Then, $f$ has regularity $C^1(\overline U') \cap W^{1+s,p}(U')$ (resp. $c^{1,\alpha} \cap BV^2(U')$). 
\end{lemma}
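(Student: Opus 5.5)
The plan is to express $Df$ as a composition of Lipschitz maps applied to the first derivatives of $u$, pulled back by $\Phi$, and then apply Lemmas \ref{lemma:composition_continuity}, \ref{lemma:comp_diffeo_wsp} and \ref{lemma:inverse_wsp}. Since $f = v\circ\Phi$, the chain rule (valid classically because $v,\Phi \in C^1$) gives
\begin{equation*}
    Df = (Dv\circ\Phi)\, D\Phi = (Dv \, (D\Psi)^{-1})\circ \Phi .
\end{equation*}
Thus $f \in C^1(\overline U')$ is immediate, and $f\in L^p$ is trivial since $f$ is bounded. It remains to show that the first derivatives $\partial_j f$ belong to $W^{s,p}(U')$, respectively to $c^{0,\alpha}\cap BV(U')$.

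First, I will treat the quantity $A := Dv\,(D\Psi)^{-1}$ on $U$. The entries of $A$ are $G(Du)$ for a function $G$ defined on matrices, namely $G(P) = P_3 (P')^{-1}$, where $P'$ is the $2\times2$ in-plane block and $P_3$ is the last row. Since $D\Psi$ is continuous on $\overline U$ and invertible, $Du$ takes values in a compact set $K$ on which $\det P' \ge c>0$ and $|P| \le C$. Modify $G$ outside a neighbourhood of $K$ so that it is globally Lipschitz on $\R^{3\times 2}$; this is the same device as in Lemma \ref{lemma:inverse_wsp}. Then, since $Du \in W^{s,p}(U)$ (resp.\ $c^{0,\alpha}\cap BV$), the observation preceding Lemma \ref{lemma:composition_continuity} gives $A\in W^{s,p}(U)$ (resp.\ $c^{0,\alpha}(U)\cap BV(U)$). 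In the $c^{0,\alpha}$ case I would verify this through Lemma \ref{lemma:c0alpha_equivalence}(iii), using $[G\circ Du]_{0,\alpha|\epsilon}\le \operatorname{Lip}(G)[Du]_{0,\alpha|\epsilon}$. In the $BV$ case I would use the chain rule for Lipschitz functions of $BV$ maps (or the Bourgain--Brezis--Mironescu characterisation already used in Lemma \ref{lemma:comp_diffeo_wsp}).

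Second, I will compose with $\Phi$. Here $\Psi\in C^1(\overline U)$ is a diffeomorphism onto $U'$, and, after shrinking to a subdomain, its inverse satisfies $\Phi \in C^1(\overline U')$, so Lemma \ref{lemma:comp_diffeo_wsp} applies and gives $Df = A\circ\Phi \in W^{s,p}(U')$ (resp.\ $c^{0,\alpha}\cap BV(U')$). Combined with $f\in C^1$, this yields the claim. Lemma \ref{lemma:inverse_wsp} shows in addition that $\Phi$ has the same regularity, though that is not strictly needed here.

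The main technical point I anticipate is the bookkeeping of domains. The lemmas require bounded Lipschitz domains with $C^1$ up to the boundary, so I would work on $V\subset\subset U$ with smooth boundary and set $V' = \Psi(V)$. However, $\Psi(V)$ is only a $C^1$ domain; its boundary is $C^1$ and hence Lipschitz, which suffices. For the $c^{1,\alpha}$ case one should also confirm that a globally Lipschitz modification of $G$ preserves the little Hölder property; Lemma \ref{lemma:c0alpha_equivalence}(iii) handles this directly.
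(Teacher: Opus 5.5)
Your proposal is correct and is essentially the paper's argument. The paper writes $Df = (Dv\circ\Phi)\,D\Phi$ and cites Lemmas \ref{lemma:composition_continuity}, \ref{lemma:comp_diffeo_wsp} and \ref{lemma:inverse_wsp}; you instead fold $(D\Psi)^{-1}$ into a Lipschitz function of $Du$ on $U$ before pulling back by $\Phi$, which is the device inside the proof of Lemma \ref{lemma:inverse_wsp} applied once, and it spares you the separate product step.
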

\begin{proof}
    It is clear that $f \in C^1(\overline U')$. Moreover, since $Df = Dv(\Phi)D\Phi$,  $Df$ belongs to $W^{s,p}(U';\R^2)$ (respectively $c^{0,\alpha} \cap BV(U';\R^2)$) by Lemmas \ref{lemma:composition_continuity}, \ref{lemma:comp_diffeo_wsp}, and \ref{lemma:inverse_wsp}.
\end{proof}

\subsection{Proof of Corollary \ref{cor:cartan_particular_frame}} \label{sec:proof_cartan_cor}
Let $u: U \to \R^3$ be an isometric embedding of $g$ of class $H^\frac32(U;\R^3) \cap C^1(\overline U; \R^3)$, and $U', \Psi, \Phi, v, f$ defined as in \S \ref{sec:surfs_are_locally_graphical}. Because the metric $g' := G_f^*e$ is not smooth, we cannot immediately apply Theorem \ref{thm:cartan_main}. We address this issue by pulling back onto $U$. 

\begin{proposition} \label{prop:pullback_Cartan}
    Let $\{\eta'^i\}_{i=1}^2 \in CH^\frac12(U';T^*\R^2)$ be any $g'$-orthonormal coframe\footnote{Such an orthonormal coframe exists, e.g. by applying the Gram-Schmidt process to a smooth coframe.}, with dual frame $\{e'_i\}_{i=1}^2$. Define a distributional $1$-form $\omega' \in CH^{-\frac12}(U';T^*\R^2)$ via Proposition \ref{prop:WeakProducts} by
    \begin{equation} \label{eq:def_omegapr}
        \omega' := \iota_{e'_2} d\eta'^1 - \iota_{e'_1} d\eta'^2,
    \end{equation}
    Then, there holds
    \begin{equation}
        d\omega' = K_g(\Phi)(1+|Df|^2)^\frac12 dx^1 \wedge dx^2 \quad \text{ in } \mathcal{D}'(U').
    \end{equation}
\end{proposition}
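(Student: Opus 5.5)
The plan is to pull everything back to $(U,g)$, where the metric is smooth, apply Theorem \ref{thm:cartan_main} there, and push the resulting second structural equation forward to $U'$ via Lemma \ref{lemma:pullback_commutation}.

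\textbf{Step 1: pull the coframe back.} Set $\eta^i := \Psi^*\eta'^i$. Since $g' = \Phi^*g$, equivalently $\Psi^* g' = g$, the forms $\{\eta^i\}$ are a positively oriented $g$-orthonormal coframe on $U$, after reordering if $\Psi$ reverses orientation. They have regularity $CH^\frac12(U;T^*\R^2)$. To see this, write $\eta^i = (\eta'^i_j\circ\Psi)\, d\Psi^j$. By Lemma \ref{lemma:comp_diffeo_wsp}, $\eta'^i_j\circ\Psi \in CH^\frac12$. Also $D\Psi \in CH^\frac12$, because $u\in C^1\cap H^\frac32$. Finally, $CH^\frac12$ is an algebra. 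The dual frame is $e_i = \Phi_* e'_i$, which lies in $CH^\frac12(U;T\R^2)$ for the same reasons.

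\textbf{Step 2: apply Theorem \ref{thm:cartan_main} on $U$.} Define $\omega := \iota_{e_2}d\eta^1 - \iota_{e_1}d\eta^2$. Theorem \ref{thm:cartan_main} gives $d\omega = K_g\, dvol_g$ in $\mathcal{D}'(U)$, and moreover $\omega\in H^{-\frac12}_{loc}$.

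\textbf{Step 3: transfer $\omega$ to $U'$.} We want $\Phi^*\omega = \omega'$. Apply Lemma \ref{lemma:pullback_commutation} with $X = e_2$ and $\alpha = \eta^1$; note that $\Psi_* e_2 = e'_2$ and $\Phi^*\eta^1 = \eta'^1$. This gives
\[
\Phi^*(\iota_{e_2}d\eta^1) = \iota_{e'_2}d\eta'^1 ,
\]
and the analogous identity holds for the other term. Two points need care here.
\begin{itemize}
\item The lemma is stated for $\alpha\in H^\frac12$, and $\eta^1 = \Psi^*\eta'^1$ satisfies this.
\item The lemma is phrased with the roles of $U,U'$ as given there, so I would check the direction of pullback carefully.
\end{itemize}
If needed, I would localize to simply connected subdomains, since all statements are local.

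\textbf{Step 4: commute $d$ with the pullback and compute.} Next we need $d(\Phi^*\omega) = \Phi^*(d\omega)$ for the distributional $\omega$. Test against $\psi\in C^\infty_c(U')$. Then
\[
d(\Phi^*\omega)[\psi] = \Phi^*\omega[d\psi] = \omega[\Psi^*d\psi].
\]
Here $\Psi^*d\psi = d(\psi\circ\Psi)$, which holds for $\Psi\in C^1$ by the approximation argument already used for \eqref{eq:pullback_d_commute}. Note that $\psi\circ\Psi\in CH^\frac12_0(U)$. By density of smooth functions in $CH^\frac12_0$, the identity $\omega[d\phi] = \int_U K_g\phi\, dvol_g$ extends to $\phi = \psi\circ\Psi$. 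This yields
\[
d\omega'[\psi] = \int_U K_g\,(\psi\circ\Psi)\, dvol_g .
\]
Changing variables $x = \Phi(y)$ turns this into
\[
\int_{U'} K_g(\Phi)\,\psi\, \sqrt{\det g'}\, dy, \qquad \sqrt{\det g'} = (1+|Df|^2)^\frac12 .
\]
This is the claim, up to a sign coming from orientation, which is fixed by the choice in Step 1.

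\textbf{Main obstacle.} I expect the delicate step to be the extension of the second structural equation to the merely $CH^\frac12_0$ test function $\psi\circ\Psi$. It requires $d\omega$ to act continuously on such test functions. Since $\omega\in H^{-\frac12}$, we have $\omega[d\phi]$ bounded by $\|d\phi\|_{H^{\frac12}_0}$, i.e. it requires $\phi \in H^{3/2}$, which is more regularity than $CH^\frac12$ provides. So I would instead use $\omega = \hat\omega - d\theta$ from \eqref{eq:omega_equals_lifting_exprn}. Then
\[
\omega[d\phi] = \hat\omega[d\phi] - d\theta[d\phi] = \hat\omega[d\phi] = \int_U K_g\,\phi\, dvol_g ,
\]
because $d\theta[d\phi] = \pm\theta[dd\phi] = 0$ (using $d^2=0$ for the $C^1$ function $\phi$). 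The remaining integral $\hat\omega[d\phi] = \int d\phi \wedge \hat\omega$ is fine for $\phi\in C^1_c$, and by the smooth identity it equals $\int_U K_g\phi\, dvol_g$. Since $\psi\circ\Psi \in C^1_c$, this bypasses the density issue.
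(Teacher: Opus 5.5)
Your proposal is correct and follows the paper's proof step for step: pull the coframe back to $U$ via $\Psi$, apply Theorem \ref{thm:cartan_main} there, identify $\omega'=\Phi^*\omega$ through Lemma \ref{lemma:pullback_commutation}, test against $\psi\circ\Psi$, and change variables using $\Phi^*dvol_g=(1+|Df|^2)^{\frac12}dx^1\wedge dx^2$. Your detour through $\omega=\hat\omega-d\theta$ is valid but unnecessary, because $d(\psi\circ\Psi)=(D\psi\circ\Psi)\,D\Psi$ already lies in $CH^{\frac12}_0$ since $D\Psi\in C^0\cap H^{\frac12}$ (you need this anyway, both for $(\Phi^*\omega)[d\psi]$ to be defined and to pair with the $CH^{-\frac12}$ identity $\omega=\hat\omega-d\theta$), so mollifying $\psi\circ\Psi$ extends $d\omega=K_g\,dvol_g$ to it directly, which is the step the paper takes without comment.
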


\begin{proof}
    Consider the pullback coframe $\eta^i := \Psi^*\eta'^i \in CH^\frac12(U;T^*\R^2)$, whose dual frame with respect to $g$ is $e_i := \Phi_*e'_i \in CH^\frac12(U;T\R^2)$. Since $\{\eta^i\}_{i=1}^2$ is an orthonormal coframe $(U,g)$ of class $CH^\frac12$, Theorem \ref{thm:cartan_main} implies that the distributional $1$-form
    \begin{equation*}
        \omega := \iota_{e_2}d\eta^1 - \iota_{e_1}d\eta^2
    \end{equation*}
    (defined via Proposition \ref{prop:WeakProducts}) belongs to $H^{-\frac12}(U;T^*\R^2)$ and satisfies Cartan's structural equations \eqref{eq:cartan1} and \eqref{eq:cartan2} in the sense of distributions. Moreover, by Lemma \ref{lemma:pullback_commutation}, $\omega'$ and $\omega$ are related via pullback:
    \begin{equation*}
        \omega' = \Phi^*\omega \quad \text{in } CH^{-\frac12}(U';T\R^2).
    \end{equation*}
    We compute for $\phi \in C^\infty_c(U')$:
    \begin{align*}
        d\omega'[\phi] 
            &= \omega'[d\phi] 
                =  (\Phi^*\omega)[d\phi]\\
            &= \omega[\Psi^*(d\phi)] 
                = \omega[d(\phi\circ\Psi)] 
                =  d\omega[\phi\circ\Psi]
                = \int_U \phi(\Psi)K_g dvol_g.
    \end{align*}
    where we have used Cartan's second structural equation $d\omega = K_g dvol_g$ for the last equality. Using the change of variable formula and noting that $\Phi^*dvol_g = dvol_{g'} = (1+|Df|^2)^\frac12 dx^1 \wedge dx^2$ (since $\Phi$ is an isometry), we therefore have
     \begin{align*}
        d\omega'[\phi] 
            &= \int_{U'} \phi K_g(\Phi) (1+|Df|^2)^\frac12 dx^1 \wedge dx^2 \text{ for all } \phi \in C_c^\infty(U').
    \end{align*}
    The proof is complete.
\end{proof}

\begin{proof}[{Proof of Corollary \ref{cor:cartan_particular_frame}}]
    We now make an explicit choice of coframe on $(U',g')$, by applying the Gram-Schmidt process to the coordinate coframe $\{dx^i\}_{i=1}^2$, namely
    \begin{equation} \label{eq:coframe_choice}
        \begin{split}
            \eta'^1 &:= (1+|Df|^2)^\frac12 (1+f_{x_2}^2)^{-\frac12} dx^1,\\
            \eta'^2 &:= f_{x_1}f_{x_2} (1+f_{x_2}^2)^{-\frac12} dx^1 + (1+f_{x_2}^2)^{\frac12} dx^2.
        \end{split}
    \end{equation}
    It is clear from Lemma \ref{lemma:composition_continuity} that $\{\eta'^i\}_{i=1}^2 \subset CH^\frac12(U';T^*\R^2)$. Let $\omega'$ be defined from $\{\eta'^i\}$ by \eqref{eq:def_omegapr} via Proposition \ref{prop:WeakProducts}. By Proposition \ref{prop:pullback_Cartan} we only need to check that
    \begin{equation} \label{eq:omega_as_DetDw}
        d\omega' = \Det Dw \quad \text{in } {\mathcal{D}'(U')},
    \end{equation}
    where we recall $w = F(Df)$ and $F(z) = ((1+|z|^2)^{-\frac12} (1+z_2^2)^{-1} z_1, z_2)$. 
    
    Take a sequence $\{f_\epsilon\} \subset C^\infty(\overline U')$ such that $Df_\epsilon \to Df$ in $CH^\frac12(U';\R^2)$ as $\epsilon \searrow 0$. Consider the regularised metric $g'_{(\epsilon)} := 1+Df_{\epsilon} \otimes Df_{\epsilon}$ on $U'$, and define the regularising $g_{(\epsilon)}'$-orthonormal coframe $\eta'^i_{(\epsilon)}$ by replacing $f$ by $f_\epsilon$ everywhere, i.e.,
    \begin{equation*}
        \begin{split}
            \eta'^1_{(\epsilon)} &:= (1+|Df_\epsilon|^2)^\frac12 (1+f_{\epsilon, x_2}^2)^{-\frac12} dx^1,\\
            \eta'^2_{(\epsilon)} &:= f_{\epsilon, x_1}f_{\epsilon, x_2} (1+f_{\epsilon, x_2}^2)^{-\frac12} dx^1 + (1+f_{\epsilon, x_2}^2)^{\frac12} dx^2.
        \end{split}
    \end{equation*}
    By Lemma \ref{lemma:composition_continuity} and the uniform convergence $Df_\epsilon \to Df$ in $\overline U'$, we have $\eta'^i_{(\epsilon)} \to \eta'^i$ in $CH^\frac12(U';T^*\R^2)$. 
    
    Let $\{e'_i\}_{i=1}^2$ be the $g'$-orthonormal frame dual to $\{\eta'^i\}_{i=1}^2$, namely
    \begin{equation} \label{eq:frame_choice}
        \begin{split}
            e'_1 &:= (1+|Df|^2)^{-\frac12}\big( (1+f_{x_2}^2)^\frac12 \partial_{x_1} - f_{x_1}f_{x_2}(1+f_{x_2}^2)^{-\frac12}\partial_{x_2} \big),\\
            e'_2 &:= (1+f_{x_2}^2)^{-\frac12} \partial_{x_2}.
        \end{split}
    \end{equation}
    Likewise, let $\{e'_{i, (\epsilon)}\}_{i=1}^2$ be the frame dual to $\{\eta'^i_{(\epsilon)}\}_{i=1}^2$ (which is $g'_{(\epsilon)}$-orthonormal and satisfies $\eta'^i_{(\epsilon)}(e'_{j,(\epsilon)}) = \delta_{ij}$), given by replacing $f$ by $f_\epsilon$ everywhere in \eqref{eq:frame_choice}. Similarly, $e'_{i,(\epsilon)} \to e'_i$ in $CH^\frac12(U';T\R^2)$.
    
    A direct computation shows that the connection form of $\{\eta'^i_{(\epsilon)}\}_{i=1}^2$ on $(U',g'_{(\epsilon)})$ is
    \begin{align*}
        \omega'_{(\epsilon)} &= \iota_{e'_{2,(\epsilon)}} d\eta'^1_{(\epsilon)} - \iota_{e'_{1,(\epsilon)}} d\eta'^2_{(\epsilon)}\\
        &= (1+|Df_\epsilon|^2)^{-\frac12}(1+f_{\epsilon, x_2}^2)^{-1} f_{\epsilon, x_1} df_{\epsilon, x_2},
    \end{align*}
    and hence that
    \begin{equation*}
        d\omega'_{(\epsilon)} = \det D(F(Df_\epsilon)).
    \end{equation*}
    Now, on one hand, by the convergence of $\eta'^i_{(\epsilon)}$ and $e'^i_{(\epsilon)}$ and Proposition \ref{prop:WeakProducts}, we have
    \begin{equation*}
        \omega'_{(\epsilon)} \to \omega' \quad \text{in } CH^{-\frac12}(U';T^*\R^2).
    \end{equation*}
    On the other hand, we have $F(Df_\epsilon) \to F(Df) = w$ in $H^\frac12(U';\R^2)$ by Lemma \ref{lemma:composition_continuity}. Appealing to \cite[Theorem 3]{brezis_nguyen_11}, it follows that, in the sense of distributions on $U'$,
    \begin{equation*}
        \lim_{\epsilon\searrow 0}d\omega'_{(\epsilon)} = \lim_{\epsilon\searrow 0}\det D(F(Df_\epsilon)) = \Det Dw,
    \end{equation*}
    and hence \eqref{eq:omega_as_DetDw} is proved.
\end{proof}

\subsection{The Gauss equation in $C^1 \cap W^{1+\frac23,3}$} \label{sec:W23_Gauss}

Next, we use Corollary \ref{cor:cartan_particular_frame} to prove the Gauss equation \eqref{eq:gauss1} for the case that $f \in C^1(\overline U') \cap W^{1+\frac23,3}(U')$. 

\begin{proof}[Proof of Theorem \ref{thm:Gauss_23}, case (ii)]
   By localisation of distributions, we can assume that $u \in C^1(\overline U; \R^3) \cap W^{1+\frac23,3}(U;\R^3)$. Then, by Lemma \ref{lemma:f_smooth_as_u} we have $f \in C^1(\overline U') \cap W^{1+\frac23,3}(U')$. Note that $W^{1+\frac23,3}(U;\R^3)$ embeds into $H^\frac32(U;\R^3)$, so Corollary \ref{cor:cartan_particular_frame} implies that $f \in C^1(\overline U') \cap W^{1+\frac23,3}(U')$ and
   \begin{equation*}
       \Det Dw = K_g(\Phi)(1+|Df|^2)^2 \quad \text{in } \mathcal{D}'(U'),
   \end{equation*}
   where $w = F(Df)$ and $F(z) = ((1+|z|^2)^{-\frac12}(1+z_2^2)^{-1}z_1,z_2)$. 
   
   We now wish to use a version of the chain rule to conclude. As pointed out earlier in the introduction, in the current case, this can be done in a straightforward manner with a regularisation procedure; the situation of case \eqref{case:GaussBV} is more delicate and will be treated in the next subsection. Indeed, take a regularising sequence $\{f_\epsilon\} \subset C^\infty(\overline U')$ such that $f_\epsilon \to f$ in $C^1(\overline U') \cap W^{1+\frac23,3}(U')$. Then, $F(Df_\epsilon) \to F(Df)$ in $W^{\frac23,3}(U';\R^2)$ by Lemma \ref{lemma:composition_continuity}. Recall that $v \mapsto \Det Dv$ is a well-defined continuous map from $W^{\frac23,3}(U';\R^2)$ into $(W^{\frac23,3}_0(U'))^*$ (see, e.g., \cite[Proposition 1]{gladbach_olbermann}). Therefore,
   \begin{equation*}
    \det D(F(Df_\epsilon)) \to \Det Dw \quad \text{in } (W^{\frac23,3}_0(U'))^*.
   \end{equation*}
   Moreover, by the chain rule for Jacobian determinant we have
   \begin{equation*}
       \det D(F(Df_\epsilon)) = \det DF(Df_\epsilon)\det D^2 f_\epsilon =(1+|Df_\epsilon|^2)^{-\frac32} \det D^2 f_\epsilon.
   \end{equation*}
   Lemma \ref{lemma:composition_continuity} implies that
   \begin{equation*}
       (1+|Df_\epsilon|^2)^\frac32 \to (1+|Df|^2)^\frac32 \quad \text{in } W^{\frac23,3}(U').
   \end{equation*}
   Therefore, for any $\psi \in C^\infty_c(U')$,
   \begin{align*}
       \Det D^2 f[\psi] &= \lim_{\epsilon \searrow 0} \int_{U'} \det D^2 f_\epsilon~ \psi ~dx\\
       &= \lim_{\epsilon \searrow 0} \Big\langle \det D(F(Df_\epsilon)), (1+|Df_\epsilon|^2)^\frac32 \psi \Big\rangle\\
       &= \Big\langle\Det Dw , (1+|Df|^2)^\frac32 \psi\Big\rangle\\
       &= \int_{U'} K_g(\Phi)(1+|Df|^2)^2 \psi~dx,
   \end{align*}
   where $\langle \cdot, \cdot\rangle$ denotes the pairing between $(W^{\frac23,3}_0(U'))^*$ and $W^{\frac23,3}_0(U')$.
\end{proof}

\subsection{The Gauss equation in $c^{1,\frac12} \cap BV^2$} \label{sec:BV_Gauss}

We next prove the Gauss equation for $f \in c^{1,\frac12} \cap BV^2$ (i.e. case (i) of Theorem \ref{thm:Gauss_23}). As in case (ii), we shall deduce the theorem from Corollary \ref{cor:cartan_particular_frame}, which says that
\begin{equation*}
    \Det Dw = K_g(\Phi)(1+|Df|^2)^\frac12 \quad \text{in } \mathcal{D}'(U')
\end{equation*}
where $w := F(Df)$ and $F : \R^2 \to \R^2$ is given by $F(z_1,z_2) = ((1+z_2^2)^{-1}(1+z_1^2+z_2^2)^{-\frac12}z_1,z_2)$. A direct calculation shows that $\det DF(z) = (1+|z|^2)^{-\frac32}$. Hence, the problem of recovering $\Det D^2f$ from $\Det Dw$ essentially rests on the validity of an appropriate chain rule $\Det D^2w = \Det DF(w) \Det Dw$ for the Jacobian determinant. Lacking a sufficient understanding of which space the distributional Jacobian maps $c^{0,\frac12} \cap BV(U')$ into, we prove the following  Proposition \ref{prop:BV_det_chain} by more ad hoc arguments. It is the key tool in the derivation of Theorem \ref{thm:Gauss_23}, case (i), from Corollary \ref{cor:cartan_particular_frame}.

\begin{proposition} \label{prop:BV_det_chain}
    Let $\Omega \subset \R^2$ be an open set, and $w \in c^{0,\frac12}_{loc}\cap BV_{loc}(\Omega;\R^2)$. Assume that $\Det Dw$ belongs to $L^1_{loc}(\Omega)$. Let $G : \R^2 \to \R^2$ be a $C^3$ function with bounded derivatives up to third order, and define $v := G(w)$, which belongs to $c^{0,\frac12}_{loc}\cap BV_{loc}(\Omega;\R^2)$ by Lemma \ref{lemma:composition_continuity}. Then,
    \begin{equation*}
        \Det Dv = \det DG(w) \Det Dw \quad \text{in } \mathcal{D}'(\Omega).
    \end{equation*}
\end{proposition}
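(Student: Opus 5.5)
Fix $\psi \in C^\infty_c(\Omega)$ with support in some $V \subset\subset \Omega$, and mollify: set $w_\epsilon := w * \rho_\epsilon$ and $v_\epsilon := G(w_\epsilon)$. The proof rests on three facts. First, the smooth chain rule $\det Dv_\epsilon = \det DG(w_\epsilon)\det Dw_\epsilon$. Second, $\det Dw_\epsilon \to \Det Dw$ and $\det Dv_\epsilon \to \Det Dv$ in $\mathcal{D}'$. Third, the multiplication by $\det DG(w_\epsilon)$ can be passed to the limit. For the second fact we write the Jacobian in divergence form,
\begin{equation*}
\det Dw_\epsilon[\psi] = -\tfrac12\int \big(w^1_\epsilon \partial_2 w^2_\epsilon\, \partial_1\psi - w^1_\epsilon \partial_1 w^2_\epsilon\, \partial_2 \psi\big)\,dx .
\end{equation*}
Here $w_\epsilon \to w$ uniformly and $Dw_\epsilon \to Dw$ weakly-* as measures with bounded mass. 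Since each term is the product of a uniformly convergent continuous factor and a weakly-* convergent measure, we get $\det Dw_\epsilon \to \Det Dw$ in $\mathcal{D}'$. The same argument applies to $v_\epsilon \to v$, because $G(w_\epsilon) \to G(w)$ uniformly and $|Dv_\epsilon|(V)$ is bounded by $\|DG\|_\infty |Dw_\epsilon|(V)$.

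The whole difficulty is the third fact. We must show
\begin{equation*}
\int \det DG(w_\epsilon)\det Dw_\epsilon\, \psi \,dx \to \int \det DG(w)\, \Det Dw\, \psi\, dx ,
\end{equation*}
where $h_\epsilon := \det DG(w_\epsilon)\psi$ converges only uniformly, and $\det Dw_\epsilon$ converges only distributionally. I split the integral as
\begin{equation*}
\int h\,\det Dw_\epsilon + \int (h_\epsilon - h)\det Dw_\epsilon, \qquad h := \det DG(w)\psi .
\end{equation*}
The first term is handled as follows. Write $h = h_\delta + (h-h_\delta)$ with $h_\delta$ a mollification of $h$. For $h_\delta$ the distributional convergence applies directly. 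The remainder is what must be estimated, and both it and the second term are of the form $\int \phi\, \det Dw_\epsilon$ with $\phi$ a $c^{0,\frac12}\cap BV$ function that is uniformly small in a H\"older sense.

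The key estimate I would aim for is the following. For $\phi \in C^{0,\frac12}_c(V)$ with $D\phi$ a measure,
\begin{equation*}
\Big|\int \phi \det Dw_\epsilon\Big| \lesssim [w]_{0,\frac12;W|\epsilon}\,\big(|D\phi|(V)\,[w]_{0,\frac12;W|\epsilon} + [\phi]_{0,\frac12}\, |Dw|(W)\big) + \ldots
\end{equation*}
To obtain it, integrate by parts so that one derivative falls on $\phi$ and the other stays on $w$. Then use Lemma \ref{lemma:c0alpha_convolution}(ii), $|Dw_\epsilon| \lesssim [w]_{0,\frac12;W|\epsilon}\,\epsilon^{-1/2}$, which makes the $\epsilon^{-\frac12}$ blow-up of one gradient pair against an $\epsilon^{\frac12}$ gain. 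That gain comes from replacing $w^1_\epsilon$ by $w^1_\epsilon - w^1_\epsilon(x_0)$ locally, or from $h_\epsilon - h = O([w]_{0,\frac12;W|\epsilon}\,\epsilon^{\frac12})$ (Lemma \ref{lemma:c0alpha_convolution}(i)). Concretely, for the second term,
\begin{equation*}
\int (h_\epsilon-h)\det Dw_\epsilon \le \|h_\epsilon-h\|_\infty \|Dw_\epsilon\|_\infty |Dw_\epsilon|(V) \lesssim [w]^2_{0,\frac12;W|\epsilon}\, |Dw|(W),
\end{equation*}
which tends to $0$ by the little H\"older property, Lemma \ref{lemma:c0alpha_equivalence}(iii). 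This is exactly where $c^{0,\frac12}$ rather than $C^{0,\frac12}$ is used.

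For the term $\int (h-h_\delta)\det Dw_\epsilon$ the crude bound is not enough, because $h-h_\delta$ is small only at scale $\delta$. I would therefore choose $\delta = \epsilon$ (a diagonal argument), so that $\|h-h_\epsilon\|_\infty \lesssim o(1)\,\epsilon^{\frac12}$ again. For this I need $h \in c^{0,\frac12}$, which holds since $G \in C^1$ and $w \in c^{0,\frac12}$. It remains to show $\int h_\epsilon \det Dw_\epsilon \to \int h\,\Det Dw$ with $h_\epsilon := h*\rho_\epsilon$ smooth. The difficulty is that $h_\epsilon$ is not a fixed test function. This is where the hypothesis $\Det Dw \in L^1_{loc}$ should enter, and I expect it to be the main obstacle. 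My first attempt is to write $\int h_\epsilon \det Dw_\epsilon - \int h_\epsilon \Det Dw$ as a commutator $\int h_\epsilon\,(\det Dw_\epsilon - (\Det Dw)*\rho_\epsilon) + \int (h_\epsilon - h*\rho_\epsilon*\rho_\epsilon)\Det Dw$. The second piece tends to $0$ by dominated convergence, since $\Det Dw \in L^1$ and the test functions converge uniformly. For the first piece I would use the divergence-form commutator estimate, bounding $\det Dw_\epsilon - (\Det Dw)_\epsilon$ in $W^{-1,1}$ by $\|w_\epsilon-w\|_\infty |Dw|$, and pairing it against $Dh_\epsilon$, which is controlled in $L^1$ by $|Dh|(V)$ through the $BV$ regularity of $h$. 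The product $\|w_\epsilon - w\|_\infty\,|Dw|(W)\,|Dh|$ is not obviously small, so I may need instead $\|w_\epsilon-w\|_\infty \|Dh_\epsilon\|_\infty |Dw|(W) \lesssim o(\epsilon^{1/2})\,o(\epsilon^{-1/2})$, which is again fine by the little H\"older property.

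Combining the three facts yields $\Det Dv[\psi] = \int \det DG(w)\,\Det Dw\,\psi\,dx$.
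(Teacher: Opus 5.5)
Your argument is correct, but it takes a different route from the paper's. The paper never mollifies $v$ or uses the smooth chain rule. It first uses the $BV$ chain rule $Dv^i=G^i_{z_j}(w)\,Dw^j$ and Lemma~\ref{lemma:DetFormulaBV} to rewrite $\Det Dv=\sum_j\Det D(\Phi^j(w),w^j)$ with $\Phi^j=G^1G^2_{z_j}$. It then mollifies $w$ only inside $\Phi^j$. The self-interaction term $\int\psi\,\Phi^1_{z_1}(w_\epsilon)D^\perp w^1_\epsilon\cdot dDw^1$ is killed by the cancellation estimate of Lemma~\ref{lemma:cancellation}, whose $\epsilon^{2\alpha-1}$ bound is $o(1)$ at $\alpha=\frac12$ by the little H\"older property. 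The cross term becomes $\Det Dw[\psi\,\Phi^1_{z_2}(w_\epsilon)]$, which is where $L^1_{loc}$ enters.

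You instead mollify everything, use $\det Dv_\epsilon=\det DG(w_\epsilon)\det Dw_\epsilon$, and pass to the limit with two estimates. The first is the crude bound $\|\det Dw_\epsilon\|_{L^1(V)}\lesssim[w]_{0,\frac12;W|\epsilon}\,\epsilon^{-\frac12}|Dw|(W)$ against $O(\epsilon^{\frac12})$ uniform errors. The second is the commutator identity $\det Dw_\epsilon=(\Det Dw)*\rho_\epsilon+\operatorname{div}E_\epsilon$ with $\|E_\epsilon\|_{L^1(V)}\lesssim[w]_{0,\frac12;W|\epsilon}\,\epsilon^{\frac12}|Dw|(W)$, paired against test functions with gradient $O(\epsilon^{-\frac12})$. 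The hypothesis $\Det Dw\in L^1_{loc}$ is used exactly as in the paper: to test $\Det Dw$ against functions that converge only uniformly.

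What each route buys:
\begin{itemize}
\item Your route is more self-contained, with no $BV$ chain rule and no cancellation lemma. It only needs $\det DG$ Lipschitz, i.e.\ $G\in C^2$. The paper needs $C^3$ because it differentiates $\Phi^j=G^1G^2_{z_j}$ once more.
\item The paper's route isolates a structural cancellation valid for every $\alpha$, and keeps one factor unmollified so it works directly with the measures $Dw^j$.
\end{itemize}

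A few details to fix:
\begin{itemize}
\item The divergence form has no factor $\frac12$: $\int\psi\det Dw_\epsilon=-\int\bigl(w^1_\epsilon\partial_2w^2_\epsilon\,\partial_1\psi-w^1_\epsilon\partial_1w^2_\epsilon\,\partial_2\psi\bigr)\,dx$.
\item Up to the sign convention for $\perp$, $E_\epsilon(x)=\int\bigl(w^1_\epsilon(x)-w^1(y)\bigr)\rho_\epsilon(x-y)\,d(Dw^2)^\perp(y)$. It is therefore controlled by $\sup_{|x-y|\le\epsilon}|w_\epsilon(x)-w(y)|$, i.e.\ by $\|w_\epsilon-w\|_\infty$ plus the $\epsilon$-oscillation of $w$, not by $\|w_\epsilon-w\|_\infty$ alone. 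Both are $\lesssim[w]_{0,\frac12;W|\epsilon}\,\epsilon^{\frac12}$, so your conclusion stands.
\item $h\in c^{0,\frac12}$ requires $\det DG$ to be Lipschitz, not merely $G\in C^1$.
\item The detour through $h*\rho_\delta$ with $\delta=\epsilon$ is unnecessary. The commutator identity applies directly to $h_\epsilon:=\psi\det DG(w_\epsilon)$, since $\|Dh_\epsilon\|_\infty\lesssim1+[w]_{0,\frac12;W|\epsilon}\,\epsilon^{-\frac12}$. Then $\int h_\epsilon\,(\Det Dw)*\rho_\epsilon=\int(h_\epsilon*\check\rho_\epsilon)\,\Det Dw\to\int h\,\Det Dw$ by dominated convergence.
\item The limit of $\det Dv_\epsilon$ still has to be identified with the Brezis--Nguyen $\Det Dv$. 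Do this either via Lemma~\ref{lemma:DetFormulaBV}, or by noting $G(w_\epsilon)\to G(w)$ in $H^{\frac12}_{loc}$ by Lemma~\ref{lemma:composition_continuity}.
\end{itemize}
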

Once Proposition \ref{prop:BV_det_chain} is proved, Theorem \ref{thm:Gauss_23}, case (i) can be deduced as follows.
\begin{proof}[Proof of Theorem \ref{thm:Gauss_23}, case (i)]
    As in case (ii), we deduce the theorem from Corollary \ref{cor:cartan_particular_frame}, which applies since by interpolation, $c^{1,\frac12} \cap BV^2(U;\R^3)$ embeds into $C^1(\overline U;\R^3) \cap H^\frac32(U;\R^3)$. Furthermore, by Lemma \ref{lemma:f_smooth_as_u}, the graphical representation $f : U' \to \R$ belongs to $c^{1,\frac12} \cap BV^2(U')$. Let $F$ be as in Corollary \ref{cor:cartan_particular_frame}. A straightforward computation shows that the range of $F$, as a map from $\R^2$ into $\R^2$, is $S := \{(w_1,w_2) \in \R^2 : |w_1| < (1+w_2^2)^{-1}\}$, and that the inverse of $F$ on $S$ is $G : S \to \R^2$ given by
    \begin{equation*}
        G(w_1,w_2) := \big( w_1 (1+w_2^2)^\frac32 (1-w_1^2(1+w_2^2)^2)^{-\frac12}, w_2\big), \quad w \in S.
    \end{equation*}
    Since $Df$ is bounded, $w := F(Df)$ takes values in a compact set $K \subset S$.  Hence, we may apply Lemma \ref{prop:BV_det_chain} to conclude that
    \begin{equation*}
        \Det D^2f = \Det D(G(w)) = \det DG(w) \Det Dw.
    \end{equation*}
    Since $\det DG(F(z)) = \det(DF(z))^{-1} = (1+|z|^2)^\frac32$ and $w = Df$, the proof is complete.
\end{proof}
It remains to prove the chain rule Proposition \ref{prop:BV_det_chain} for the Jacobian determinant, for which the following cancellation estimate is the focal point. (Compare \cite[Lemma 5.8]{giron_21}.) In contrast to the proof of case (ii) of Theorem \ref{thm:Gauss_23} where any smooth approximation works, we confine ourselves here to a particular smooth approximation of $v$ (via mollification) in order to obtain the estimate.
\begin{lemma} \label{lemma:cancellation}
Let $\Omega \subset \R^2$ be a bounded domain and $\alpha \in (0,1]$. Suppose that $v \in C^{0,\alpha} \cap BV(\Omega)$ and $h \in C_c^{0,\alpha}(\Omega)$. Let $\rho \in C^\infty_c(B_1(0))$ be a standard mollifier kernel and consider the mollification
\begin{equation*}
  v_\epsilon(x) := \int_{B_\epsilon(x)} v(y) \rho_\epsilon(x-y) dy, \quad x \in \Omega^\epsilon := \{z \in \Omega : \dist(z,\partial \Omega) > \epsilon\}.
\end{equation*}
Write $\mu := Dv \in \mathcal{M}(\Omega)^2$ for the distributional derivative of $v$, which is an $\R^2$-valued Radon measure on $\Omega$. Then, for any $\epsilon < \dist(\operatorname{supp} h,\partial\Omega)$, we have
\begin{equation} \label{eq:wedge_cancellation}
    \Big| \int_\Omega h D^\perp v_\epsilon \cdot d\mu \Big| \leq C \epsilon^{2\alpha-1} [h]_{0, \alpha; \Omega | \epsilon} [v]_{0,\alpha; \Omega | \epsilon}|\mu|(\Omega),
\end{equation}
where $C$ depends only on $\rho$.
\end{lemma}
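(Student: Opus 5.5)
The idea is to find a cancellation that costs one factor $\epsilon^{\alpha}[h]$ and one factor $\epsilon^{\alpha-1}[v]$. I will exploit the antisymmetry of $(a,b)\mapsto a^\perp\cdot b$ on a product-measure representation of the integral. To make $h$ differentiable where needed, I first split $h = (h-h_\epsilon) + h_\epsilon$, where $h_\epsilon := h*\rho_\epsilon$ is supported in $\Omega$ for $\epsilon$ as in the statement. Throughout, seminorms at scale $\epsilon$ versus $2\epsilon$ are compared up to a constant, since $[\cdot]_{0,\alpha;\Omega|2\epsilon}\le 2[\cdot]_{0,\alpha;\Omega|\epsilon}$ by the triangle inequality.

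\emph{Step 1: pointwise bounds.} Since $\int D\rho_\epsilon = 0$, we have
\begin{equation*}
D^\perp v_\epsilon(x)=\int (v(y)-v(x))\,D^\perp\rho_\epsilon(x-y)\,dy ,
\end{equation*}
so $|D^\perp v_\epsilon|\le C\epsilon^{\alpha-1}[v]_{0,\alpha;\Omega|\epsilon}$. By Lemma \ref{lemma:c0alpha_convolution}, $|h-h_\epsilon|\le C\epsilon^\alpha[h]_{0,\alpha;\Omega|\epsilon}$ and $|Dh_\epsilon|\le C\epsilon^{\alpha-1}[h]_{0,\alpha;\Omega|\epsilon}$. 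The contribution of $h-h_\epsilon$ is therefore directly bounded by $C\epsilon^{2\alpha-1}[h][v]\,|\mu|(\Omega)$.

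\emph{Step 2: symmetrization for the $h_\epsilon$ term.} Since $D^\perp v_\epsilon = \rho_\epsilon * \mu^\perp$, Fubini for the finite product measure gives
\begin{equation*}
I:=\int h_\epsilon D^\perp v_\epsilon\cdot d\mu=\iint h_\epsilon(x)\rho_\epsilon(x-y)\,d\mu^\perp(y)\cdot d\mu(x).
\end{equation*}
Now swap $x\leftrightarrow y$. Because $\rho$ is even and $a^\perp\cdot b=-b^\perp\cdot a$, this yields
\begin{equation*}
I=\tfrac12\iint (h_\epsilon(x)-h_\epsilon(y))\rho_\epsilon(x-y)\,d\mu^\perp(y)\cdot d\mu(x).
\end{equation*}
Estimating this crudely would cost two powers of $|\mu|$, so I integrate by parts only in $y$. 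For fixed $x$, the map $y\mapsto (h_\epsilon(x)-h_\epsilon(y))\rho_\epsilon(x-y)$ is smooth with compact support in $\Omega$. Hence $\int \phi\, d\mu^\perp = -\int v\, D^\perp\phi\,dy$ up to sign conventions, and I may replace $v(y)$ by $v(y)-v(x)$ because $\int D^\perp\phi=0$. Distributing the derivative produces two terms:
\begin{equation*}
(v(y)-v(x))(h_\epsilon(x)-h_\epsilon(y))D^\perp\rho_\epsilon(x-y) \quad\text{and}\quad (v(y)-v(x))\rho_\epsilon(x-y)D^\perp h_\epsilon(y).
\end{equation*}
On $|x-y|<\epsilon$ the first is bounded by $\epsilon^{\alpha}[v]\,\epsilon^\alpha[h]\,\epsilon^{-3}$ over a region of area about $\epsilon^2$. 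The second is bounded by $\epsilon^\alpha[v]\,\epsilon^{\alpha-1}[h]\,\rho_\epsilon$, whose $dy$-integral is $\epsilon^{\alpha}[v]\,\epsilon^{\alpha-1}[h]$. Either way the inner integral is $O(\epsilon^{2\alpha-1}[h][v])$ uniformly in $x$, and integrating against $d|\mu|(x)$ gives \eqref{eq:wedge_cancellation}.

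The main obstacle I anticipate is Step 2. One must check that Fubini applies to the product of two $\R^2$-valued Radon measures, which holds since both are finite and the integrand is bounded and Borel. One must also check that the integration by parts in $y$ with $x$ frozen is legitimate, i.e.\ that the test function lies in $C^\infty_c(\Omega)$, which is where the restriction on $\epsilon$ is used. The remaining computations are routine.
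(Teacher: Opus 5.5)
\textbf{The range of $\epsilon$ is not fully covered.} Your core computation is correct and takes a genuinely different route from the paper's. As written, however, it proves the estimate only for $\epsilon$ up to about $\tfrac12 d$, where $d:=\dist(\operatorname{supp}h,\partial\Omega)$, not for all $\epsilon<d$ as the lemma asserts.

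The problem is the auxiliary mollification $h_\epsilon$, whose support is $\operatorname{supp}h+B_\epsilon$. Being supported in $\Omega$ is not what you need. Since $v_\epsilon$ is only defined on $\Omega^\epsilon$, the following all require every $x\in\operatorname{supp}h_\epsilon$ to lie in $\Omega^\epsilon$:
\begin{itemize}
\item both pieces $\int (h-h_\epsilon)D^\perp v_\epsilon\cdot d\mu$ and $\int h_\epsilon D^\perp v_\epsilon\cdot d\mu$;
\item the pointwise bound of Step 1, which uses $\int D\rho_\epsilon=0$ over $B_\epsilon(x)\subset\Omega$;
\item the identity $D^\perp v_\epsilon=\rho_\epsilon*\mu^\perp$.
\end{itemize}
Likewise, your integration by parts in $y$ for $x\in\operatorname{supp}h_\epsilon$ needs $y\mapsto\rho_\epsilon(x-y)$ to have compact support in $\Omega$. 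Points of $\operatorname{supp}h_\epsilon$ can be at distance only about $d-\epsilon$ from $\partial\Omega$. So all of this is guaranteed when $2\epsilon<d$, but it can fail as $\epsilon\uparrow d$. Your sentence that the integration by parts is `where the restriction on $\epsilon$ is used' is therefore not right.

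Nor can you repair this by mollifying $h$ at a smaller scale $\delta<d-\epsilon$. Then $|Dh_\delta|$ is only bounded by $C\delta^{\alpha-1}[h]_{0,\alpha;\Omega|\epsilon}$, and the $D^\perp h_\delta$ term yields $\epsilon^{\alpha}\delta^{\alpha-1}$ instead of $\epsilon^{2\alpha-1}$. For $\alpha<1$ this is not uniform as $\epsilon\uparrow d$. Your restricted version does suffice for the only application, Corollary \ref{Cor:WedCan}, where $\epsilon\searrow 0$ and the supports lie in a fixed $\omega\subset\subset\Omega$. It is not, however, the lemma as stated.

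\textbf{How the paper avoids this.} The paper never smooths $h$. Using $\int D_x\rho_\epsilon(x-y)\,dy=0$, it writes $\int hD^\perp v_\epsilon\cdot d\mu=I_1+I_2-I_3$. The term $I_1=\iint (h(x)-h(y))(v(y)-v(x))D_x^\perp\rho_\epsilon(x-y)\,dy\cdot d\mu(x)$ is bounded exactly like your first Step 2 term. The terms $I_2$ and $I_3$ carry the factor $h(y)$. After Fubini, their inner $x$-integrals are $\int D_x^\perp\rho_\epsilon(x-y)\cdot d\mu(x)$ and $\int D_x^\perp\rho_\epsilon(x-y)\cdot v(x)\,d\mu(x)$ with $y\in\operatorname{supp}h\subset\Omega^\epsilon$. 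Both vanish: the first because $\operatorname{div}\circ D^\perp=0$, the second because $v\,d\mu=\tfrac12 D(v^2)$ by the $BV$ chain rule for continuous $v$. Only $B_\epsilon(y)\subset\Omega$ for $y\in\operatorname{supp}h$ is ever used, which is precisely the hypothesis $\epsilon<d$.

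\textbf{Comparison.} Your symmetrization replaces that chain rule with the antisymmetry of $\mu^\perp\otimes\mu$ under $x\leftrightarrow y$, plus evenness of $\rho$ (harmless for a standard kernel). This is arguably more elementary. The price is the extra mollification of $h$ and hence the lost boundary margin.

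Incidentally, your scale-$2\epsilon$ remark is never used. It could also fail for $v$ on nonconvex $\Omega$, since midpoints need not lie in $\Omega$.
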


\begin{proof}
    Let $\epsilon < \dist(\operatorname{supp} h,\partial\Omega)$. We will prove estimate \eqref{eq:wedge_cancellation} using the following four simple observations:
    \begin{enumerate}[(1)]
        \item $\int_{\R^2} D_x\rho_\epsilon(x-y)dy = 0$ for any $x \in \R^2$. 
        \item $\int_{\Omega} D_x^\perp \rho_\epsilon (x-y) \cdot d\mu(x) = 0$ for any $y \in \Omega^\epsilon$.
        \item $\int_\Omega D_x^\perp \rho_\epsilon(x-y) \cdot (v(x)d\mu(x)) = 0$ for any $y \in \Omega^\epsilon$.
        \item $\|D\rho_\epsilon\|_{L^1(\R^2)} = C \epsilon^{-1}$ where $C$ depends only on $\rho$.
    \end{enumerate}
    Points (1) and (4) are clear. Point (2) follows from the compact support of $\rho_\epsilon$ in $B_\epsilon$, the definition of the distributional derivative $\mu = Dv$, and the identity $\mathrm{div} \circ D^\perp = 0$. Point (3) uses the above together with the fact that, since $v \in BV(\Omega) \cap  C^0(\overline\Omega)$, we have $v^2 \in BV(\Omega)$ and $D(v^2) = 2v\mu$ by the chain rule in $BV$ (c.f. \cite[Theorem 3.96]{ambrosio_00}). By observation (1) we compute
    \begin{align*}
        \int_\Omega h(x)D^\perp v_\epsilon(x) \cdot d\mu(x) &= \int_\Omega \int_\Omega (h(x)-h(y))(v(y)-v(x)) D_x^\perp \rho_\epsilon(x-y) dy \cdot d\mu(x)\\
        &\quad + \int_\Omega \int_\Omega h(y)v(y) D_x^\perp \rho_\epsilon(x-y)dy \cdot d\mu(x)\\
        &\quad - \int_\Omega \int_\Omega h(y) D_x^\perp \rho_\epsilon(x-y)dy \cdot (v(x)d\mu(x))\\
        &=: I_1 + I_2 - I_3.
    \end{align*}
    Note that the integrands in $I_1, I_2, I_3$ are absolutely integrable over $\Omega \times \Omega$ with respect to the product measure $d|\mu|(x) \otimes dy$, so Fubini's theorem applies. Hence, $I_2 = I_3 = 0$ by observations (2) and (3), respectively. As the integrand is in $I_1$ supported inside the region $\{(x,y) \in \Omega \times \Omega : |x-y| \leq \epsilon\}$, we have
    \begin{align*}
        |I_1| &\leq [h]_{0,\alpha;\Omega|\epsilon}[v]_{0,\alpha;\Omega|\epsilon}  \epsilon^{2\alpha} \|D\rho_\epsilon\|_{L^1(\R^2)} |\mu|(\Omega)\\
        &\leq C [h]_{0,\alpha;\Omega|\epsilon} [v]_{0,\alpha;\Omega|\epsilon} |\mu|(\Omega) ~ \epsilon^{2\alpha-1}
    \end{align*}
    where (4) was used in the last line. The conclusion follows.
\end{proof}

In the proof of Proposition \ref{prop:BV_det_chain}, we will frequently make use of the following immediate consequence of Lemma \ref{lemma:cancellation}.

\begin{corollary}\label{Cor:WedCan}
Let $\Omega \subset \R^2$ be a bounded domain and $\alpha \in [\frac{1}{2},1]$. Suppose that $v \in c^{0,\alpha} \cap BV(\Omega)$ and let $v_\epsilon$ and $\mu$ be defined as in Lemma \ref{lemma:cancellation}. For any bounded family $\{h_\epsilon\} \subset  C^{0,\alpha}_c(\omega)$ with $\omega$ compactly contained in $\Omega$, it holds that
\begin{equation*} 
    \lim_{\epsilon \searrow 0} \int_\Omega h_\epsilon D^\perp v_\epsilon \cdot d\mu  = 0.
\end{equation*}
\end{corollary}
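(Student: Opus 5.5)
The plan is to apply Lemma \ref{lemma:cancellation} directly to each $h_\epsilon$, with $\Omega$ replaced by a slightly larger subdomain, and then use the little-H\"older condition to show that the resulting bound tends to zero.

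First, choose an intermediate domain $\omega'$ with $\omega \subset\subset \omega' \subset\subset \Omega$. For $\epsilon < \dist(\omega,\partial\omega')$ the hypothesis $\epsilon < \dist(\operatorname{supp} h_\epsilon, \partial\omega')$ of Lemma \ref{lemma:cancellation} holds uniformly in $\epsilon$. Moreover $v_\epsilon$ on $\omega'$ coincides with the mollification in the lemma, since for $x \in \omega$ it only involves values of $v$ in $B_\epsilon(x) \subset \omega'$. Only $x \in \operatorname{supp} h_\epsilon \subset \omega$ contributes to the integral, so we may restrict it to $\omega'$, with $\mu|_{\omega'} = D(v|_{\omega'})$. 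Lemma \ref{lemma:cancellation} applied on $\omega'$ with exponent $\alpha$ then gives
\begin{equation*}
\Big|\int_\Omega h_\epsilon D^\perp v_\epsilon\cdot d\mu\Big| \le C\,\epsilon^{2\alpha-1}\,[h_\epsilon]_{0,\alpha;\omega'|\epsilon}\,[v]_{0,\alpha;\omega'|\epsilon}\,|\mu|(\Omega).
\end{equation*}

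Next we bound each factor. Since $\alpha\ge\frac12$ and $\epsilon<1$, we have $\epsilon^{2\alpha-1}\le 1$. Boundedness of the family $\{h_\epsilon\}$ in $C^{0,\alpha}$ gives $[h_\epsilon]_{0,\alpha;\omega'|\epsilon}\le \sup_\epsilon \|h_\epsilon\|_{C^{0,\alpha}} < \infty$. The total variation $|\mu|(\Omega)$ is finite because $v\in BV(\Omega)$.

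The key factor is $[v]_{0,\alpha;\omega'|\epsilon}\le [v]_{0,\alpha;\Omega|\epsilon}$, which tends to $0$ as $\epsilon\searrow 0$ by the characterisation of little H\"older spaces, Lemma \ref{lemma:c0alpha_equivalence}(iii), since $v\in c^{0,\alpha}(\Omega)$. The product therefore tends to zero. This step is the only place where the little-H\"older assumption is used, and it is essential in the borderline case $\alpha=\frac12$, where $\epsilon^{2\alpha-1}=1$ gives no decay. For $\alpha>\frac12$ the factor $\epsilon^{2\alpha-1}$ alone already suffices.

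The main point needing care is the localisation in the first step: making sure the mollification and the measure $\mu$ behave correctly on $\omega'$, and that the support condition holds uniformly in $\epsilon$. This is routine once $\omega'$ is fixed.
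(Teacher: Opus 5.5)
Your proposal is correct and is the argument the paper intends when it calls this an immediate consequence of Lemma \ref{lemma:cancellation}: bound $\epsilon^{2\alpha-1}\le 1$ and $[h_\epsilon]_{0,\alpha}$ uniformly, then use Lemma \ref{lemma:c0alpha_equivalence}(iii) to get $[v]_{0,\alpha;\Omega|\epsilon}\to 0$. Two small points: the auxiliary domain $\omega'$ is unnecessary, since $\epsilon<\dist(\operatorname{supp}h_\epsilon,\partial\Omega)$ already holds uniformly once $\epsilon<\dist(\omega,\partial\Omega)$; and for $\alpha=1$, where Lemma \ref{lemma:c0alpha_equivalence} is not available and $[v]_{0,1;\Omega|\epsilon}$ need not vanish, the argument rests on your closing remark that $\epsilon^{2\alpha-1}\to 0$ together with $[v]_{0,\alpha;\Omega}<\infty$.
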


We also record the following simple observation regarding the explicit expression for $\Det Dw$ when $w$ belongs to $BV_{loc} \cap H^\frac12_{loc} \cap C^0(\Omega;\R^2)$.
\begin{lemma} \label{lemma:DetFormulaBV}
    Let $\Omega \subset \R^2$ be an open set and $w \in BV_{loc} \cap H^\frac12_{loc} \cap C^0(\Omega;\R^2)$. Let $\mu^i := Dw^i$, $i=1,2$, be the distributional derivative of $w^i$ (an $\R^2$-valued Radon measure in $\Omega$). Then, 
    \begin{equation*}
        \Det Dw [\psi] = -\int_\Omega w^1 D^\perp \psi \cdot d\mu^2 = \int_\Omega w^2 D^\perp \psi \cdot d\mu^1 \quad \forall \psi \in C^\infty_c(\Omega).
    \end{equation*}
\end{lemma}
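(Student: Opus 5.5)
Fix $\psi \in C^\infty_c(\Omega)$ and choose a bounded Lipschitz $\Omega' \subset\subset \Omega$ containing $\operatorname{supp}\psi$. On $\Omega'$ we have $w \in BV \cap H^\frac12 \cap C^0(\overline{\Omega'};\R^2)$. The plan is to mollify, $w_\epsilon := w * \rho_\epsilon$, and pass to the limit in a smooth identity.

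For the smooth maps $w_\epsilon$, the classical divergence-form identity gives
\begin{equation*}
\int_\Omega \det Dw_\epsilon\, \psi \,dx = -\int_\Omega w^1_\epsilon\, D^\perp\psi \cdot Dw^2_\epsilon \,dx = \int_\Omega w^2_\epsilon\, D^\perp \psi\cdot Dw^1_\epsilon\,dx .
\end{equation*}
This follows by writing $\det Dw_\epsilon = \operatorname{div}(w^1_\epsilon D^\perp w^2_\epsilon)$, possibly up to a sign fixed by the convention for $D^\perp$, integrating by parts, and using $D\psi\cdot D^\perp w^2_\epsilon = -D^\perp\psi \cdot Dw^2_\epsilon$. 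The second equality is the same computation with the roles of $w^1$ and $w^2$ exchanged, using antisymmetry of the determinant.

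The left-hand side converges to $\Det Dw[\psi]$. This uses $w_\epsilon \to w$ in $H^\frac12(\Omega')$ together with the continuity of the distributional Jacobian on $H^\frac12$ (Brezis -- Nguyen \cite[Theorem 3]{brezis_nguyen_11}), which is already used in the proof of Corollary \ref{cor:cartan_particular_frame}.

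For the right-hand side, write $Dw^2_\epsilon = \rho_\epsilon * \mu^2$ on $\operatorname{supp}\psi$ for small $\epsilon$. Then Fubini gives
\begin{equation*}
\int w^1_\epsilon D^\perp\psi\cdot Dw^2_\epsilon\,dx = \int (\rho_\epsilon * (w^1_\epsilon D^\perp\psi))\cdot d\mu^2,
\end{equation*}
using that $\rho$ is even. The function $\rho_\epsilon*(w^1_\epsilon D^\perp \psi)$ converges uniformly to $w^1 D^\perp\psi$, because $w^1$ is continuous on $\overline{\Omega'}$ and $\psi$ is smooth. Since $|\mu^2|(\Omega')<\infty$, the integral converges to $\int w^1 D^\perp\psi\cdot d\mu^2$. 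The same argument handles the other expression.

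The only step I expect to need care is this passage to the limit on the measure side. Here continuity of $w$ is essential, because it makes the mollified products converge \emph{uniformly} rather than merely a.e., and so avoids any issues with jump parts of $\mu^i$, which in any case are absent for continuous $BV$ functions. The rest is routine.
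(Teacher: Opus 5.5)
Your proof is correct and follows essentially the same route as the paper: smooth approximation, the divergence-form identity, Brezis--Nguyen $H^{\frac12}$-continuity of the Jacobian for the left-hand side, and uniform convergence of the continuous $w^1$ against the locally finite measure $\mu^2$ for the right-hand side. The only difference is technical: you mollify both components at once and move the mollifier onto the test function via $Dw^2_\epsilon=\rho_\epsilon*\mu^2$ and Fubini, whereas the paper approximates one component at a time through $\Det D(w^1_\epsilon,w^2)$, so the measure side follows directly from the definition of $\mu^2=Dw^2$ and works for any smooth approximating sequence.
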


\begin{proof} 
    Recall that the distributional Jacobian is a continuous bilinear operator on $H^\frac12_{loc}(\Omega)$ \cite[Theorem 3]{brezis_nguyen_11}. Let $\{w_\epsilon\}_{\epsilon \in (0,1)} \subset C^\infty(\Omega;\R^2)$ be such that $w_\epsilon \to w$ locally uniformly as well as in $H^\frac12_{loc}(\Omega,\R^2)$ as $\epsilon \rightarrow 0$. For fixed $\epsilon$, we have
    \begin{align*}
        \Det D(w^1_\epsilon, w^2)[\psi] &=  \lim_{\epsilon' \searrow 0} \int_\Omega \psi D^\perp w^1_\epsilon \cdot Dw^2_{\epsilon'} dx\\
        &= -\lim_{\epsilon' \searrow 0} \int_\Omega w^2_{\epsilon'} \mathrm{div}(\psi D^\perp w^1_{\epsilon}) dx\\
        &= \int_\Omega w^2 \mathrm{div}(w^1_\epsilon D^\perp \psi) dx\\
        &= -\int_\Omega w^1_\epsilon D^\perp \psi \cdot d\mu^2,
    \end{align*}
    where we integrated by parts in the second identity and we used the local uniform convergence $w^2_{\epsilon'} \to w$ in the third identity and the definition of $\mu^2 = Dw^2$ in the fourth identity. It follows that
    \begin{align*}
        \Det Dw[\psi] = \lim_{\epsilon \searrow 0} \Det D(w^1_\epsilon, w^2)[\psi] = -\lim_{\epsilon\searrow 0} \int_\Omega w^1_\epsilon D^\perp \psi \cdot d\mu^2 = -\int_\Omega w^1 D^\perp \psi \cdot d\mu^2.
    \end{align*}
    The other expression for $\Det Dw$ follows by reversing the roles of $w^1$ and $w^2$.
\end{proof}

Finally, we come to the proof of Proposition \ref{prop:BV_det_chain}.
\begin{proof}[Proof of Proposition \ref{prop:BV_det_chain}] 
    Fix a smaller open set $\omega \subset\subset \Omega$ and any test function $\psi \in C^\infty_c(\Omega)$ supported inside $\omega$. Let $\mu^i := Dw^i \in \mathcal{M}(\omega)^2$. By the chain rule in $BV$ (see, e.g., \cite[Theorem 3.96]{ambrosio_00}) and the continuity of $w$, we have $Dv^i = G^i_{z_j}(w)\mu^j$, and hence Lemma \ref{lemma:DetFormulaBV} yields
    \begin{equation*}
        \Det Dv[\psi] = -\int_\Omega v^1 D^\perp\psi \cdot d(Dv^2) = -\int_\Omega G^1(w) G^2_{z_j}(w) D^\perp \psi \cdot d\mu^j.
    \end{equation*}
    Letting $\Phi^j(z_1,z_2) := G^1(z)G^2_{z_j}(z)$ for $j=1,2$, and applying Lemma \ref{lemma:DetFormulaBV} once again on the right hand side, we obtain
    \begin{equation} \label{eq:sum_of_two_single_transforms}
        \Det Dv = \sum_{j=1}^2 \Det D(\Phi^j(w),w^j).
    \end{equation}
    
    Let us now compute $\Det D(\Phi^1(w),w^1)$. We shall argue by approximation by smooth functions. As usual, let $\rho \in C^\infty_c(B)$, $\rho_\epsilon(x):=\epsilon^{-2}\rho(x/\epsilon)$ be a standard mollifier and $w_\epsilon := w*\rho_\epsilon$. Then, $w_\epsilon \to w$ uniformly and in $H^\frac12(\omega;\R^2)$. By Lemma \ref{lemma:composition_continuity}, it follows that $\Phi^1(w_\epsilon) \to \Phi^1(w)$ uniformly and in $H^\frac12(\omega)$. Hence, by the $H^\frac12$-continuity of the Jacobian determinant,
    \begin{align*}
        \Det D(\Phi^1(w), w^1)[\psi] &= \lim_{\epsilon \searrow 0} \Det D(\Phi^1(w_\epsilon),w^1)[\psi]\\
        &= \lim_{\epsilon \searrow 0} \int_\Omega w^1 D^\perp \psi \cdot D(\Phi^1(w_\epsilon)) dx,
    \end{align*}
    where, for the last identiy, one can argue using Lemma \ref{lemma:DetFormulaBV} or a standard approximation argument.
    Using the definition of $\mu^1 = Dw^1$ we have
    \begin{align*}
        \int_\Omega w^1 D^\perp \psi \cdot D(\Phi^1(w_\epsilon)) dx &= -\int_\Omega w^1 \mathrm{div} \big(\psi D^\perp (\Phi^1(w_\epsilon))\big) dx\\
        &=\int_\Omega \psi D^\perp (\Phi^1(w_\epsilon)) \cdot d\mu^1\\
        &= \int_\Omega \psi ~ \Phi^1_{z_1}(w_\epsilon) D^\perp w^1_\epsilon \cdot d\mu^1 + \int_\Omega \psi ~ \Phi^1_{z_2}(w_\epsilon) D^\perp w^2_\epsilon \cdot d\mu^1\\
        &=: I_1(\epsilon) + I_2(\epsilon).
    \end{align*}
    Corollary \ref{Cor:WedCan} with $h_\epsilon := \psi \Phi^1_{z_1}(w_\epsilon)$ and $v := w^1$ implies that
    \begin{equation*}
        \lim_{\epsilon \searrow 0} I_1(\epsilon) = 0.
    \end{equation*}
    As for $I_2$, we compute
    \begin{align}
        I_2(\epsilon) &= \int_\Omega \Big( D^\perp \big(\psi \Phi^1_{z_2}(w_\epsilon) w^2_\epsilon\big) - w^2_\epsilon D^\perp (\psi \Phi^1_{z_2}(w_\epsilon)\big)\Big) \cdot d\mu^1 \nonumber\\
        &= -\int_\Omega w^2_\epsilon D^\perp\big(\psi \Phi^1_{z_2}(w_\epsilon)\big) \cdot d\mu^1.\label{eq:I2_w_replaces_weps}
    \end{align}
    where we have used the definition of $\mu^1 = Dw^1$ and the identity $\mathrm{div} \circ D^\perp = 0$. Since $w \in c^{0,\frac{1}{2}}_{{loc}}(\Omega,\R^2)$, we have by Lemmas \ref{lemma:c0alpha_convolution} and \ref{lemma:c0alpha_equivalence}(iii) that
    \begin{equation*}
        \|(w^2 - w^2_\epsilon)D^\perp \big(\psi \Phi^1_{z_2}(w_\epsilon)\big)\|_{C^0(\overline\omega)} \leq [w-w_\epsilon]_{0,\frac12;V|\epsilon} \epsilon^\frac12 [\psi \Phi^1_{z_2} (w_\epsilon)]_{0,\frac12;V|\epsilon} \epsilon^{-\frac12} \stackrel{\epsilon \rightarrow 0}{\longrightarrow} 0,
    \end{equation*}
    where $V$ is any open set with $\omega \subset\subset V \subset\subset \Omega$. Returning to \eqref{eq:I2_w_replaces_weps}, using Lemma \ref{lemma:DetFormulaBV} and recalling that $\Det Dw \in L^1_{loc}(\Omega)$, we therefore have
    \begin{align*}
        \lim_{\epsilon \searrow 0} I_2(\epsilon)
            & = - \lim_{\epsilon \rightarrow 0} \int_\Omega w^2 D^\perp\big(\psi \Phi^1_{z_2}(w_\epsilon)\big) \cdot d\mu^1\\
            &= - \lim_{\epsilon \searrow 0} \Det Dw [\psi \Phi^1_{z_2}(w_\epsilon)] 
                = - \lim_{\epsilon \searrow 0} \int_\Omega \psi \Phi^1_{z_2}(w_\epsilon)\, \Det Dw\,dx\\
            &= -\int_\Omega \psi \Phi^1_{z_2}(w)\,\Det Dw\, dx.
    \end{align*}
    Summarizing, we have thus shown that
    \begin{equation*}
        \Det D(\Phi^1(w),w^1)[\psi] =  -\int_\Omega \psi \Phi^1_{z_2}(w)\,\Det Dw dx,
    \end{equation*}
    i.e.
    \[
    \Det D(\Phi^1(w),w^1) = -\Phi^1_{z_2}(w)\,\Det Dw.
    \]
    By a similar calculation,
    \begin{equation*}
        \Det D(\Phi^2(w), w^2) = \Phi^2_{z_1} \Det Dw.
    \end{equation*}
    Recalling \eqref{eq:sum_of_two_single_transforms}, we thus have
    \begin{align*}
        \Det Dv &= (\Phi^2_{z_1}-\Phi^1_{z_2})(w) \Det Dw.
    \end{align*}
    Noting that $\Phi^2_{z_1} - \Phi^1_{z_2} = (G^1G^2_{z_2})_{z_1} - (G^1 G^2_{z_1})_{z_2} = \det DG$, we arrive at the desired conclusion.
\end{proof}

\subsection{Removable singularities for the Gauss equation} \label{sec:capacity}

We now ask whether an isometric embedding which is locally $W^{1+\frac23,3}$ away from a small `bad set' $S$ and globally $W^{1+s,p}$ must satisfy the Gauss equation for some suitable $(s,p)$. We give such statements in terms of two well-known measures of smallness: capacity and Hausdorff dimension (Theorem \ref{thm:capacity} and Corollary \ref{cor:hausdorff} respectively). 

Let $K \subset \R^n$ be a compact set. If $s \in (0,1)$ and $p \in [1,\infty)$, recall that the $(s,p)$-capacity of $K$ (c.f. \cite[Definition 2.2.1]{adams_hedberg}) is defined by
\begin{equation} \label{eq:def_cap_wsp}
    \operatorname{Cap}_{s,p}(K) := \inf \big\{ \|u\|_{W^{s,p}(\R^n)}^p: u \in C^\infty_c(\R^n) , u \geq 1 \text{ on } K \big\}.
\end{equation}
Furthermore, if $\Omega \subset\R^n$ is a bounded Lipschitz domain with $K \subset \Omega$, $\mathrm{Cap}_{s,p}(K) = 0$ if and only if there exists a sequence $(\chi_k)_{k=1}^\infty \subset C^\infty_c(\Omega)$ such that
\begin{equation*}
     0 \leq \chi_k \leq 1, \chi_k \geq 1 \text{ in a neighbourhood of } K, \|\chi_k\|_{W^{s,p}(\Omega)} \to 0.
\end{equation*}
This follows from, e.g., \cite[Lemma 5.3]{diNezza_12} and \cite[Theorem 2.1]{shi_xiao_16}.

\begin{proof}[Proof of Theorem \ref{thm:capacity}] Fix $\psi \in C^\infty_c(U')$. We need to show that
    \begin{equation}
        \Det D^2 f[\psi] = \int_{U'} K_g(\Phi)(1+|Df|^2) \psi ~dx.
        \label{Eq:1}
    \end{equation}
    By Theorem \ref{thm:Gauss_23}, we have
    \begin{equation*}
        \Det D^2 f = K_g(\Phi)(1+|Df|^2)^2 \quad \text{ in } \mathcal{D}'(U'\setminus S'),
    \end{equation*}
    where $S' := \Psi(S)$ and $u = (\Psi,v)$ as earlier. Therefore,
    \begin{equation}
        \Det D^2 f[\varphi \psi] = \int_{U'} K_g(\Phi)(1+|Df|^2)\varphi \psi ~dx \quad \forall \varphi \in C^\infty_c(U'\setminus S').
        \label{Eq:2}
    \end{equation}
    To obtain \eqref{Eq:1}, we proceed to choose in \eqref{Eq:2} a suitable sequence $\varphi_k \in C^\infty_c(U'\setminus S')$ which suitably approximates the constant function $1$. This is facilitated by the assumption that $S$ has zero capacity.
    
    Set 
    \begin{equation} \label{eq:s0p0}
        s_0 := 2(1-s); \quad p_0 := \frac{p}{p-2}.
    \end{equation}
    Since $\Psi$ is Lipschitz, we have by {\cite[Theorem 5.2.1]{adams_hedberg}} that
    \begin{equation*}
        \operatorname{Cap}_{s_0,p_0}(S') \leq C(s,p,[\Psi]_{C^{0,1}}) \operatorname{Cap}_{s_0,p_0}(S) = 0.
    \end{equation*}
    We can therefore choose a sequence $(\chi_k) \subset C^\infty_c(U')$ such that $0\leq \chi_k \leq 1$, $\chi_k \equiv 1$ in an open neighbourhood of $S'$, and $\chi_k \to 0$ in $W^{s_0,p_0}(U')$. Let $\varphi_k := 1-\chi_k$. Then,
    \begin{equation*}
        \varphi_k \to 1 \text{ in } W^{s_0,p_0}(U')
    \end{equation*}
    and $\varphi_k \equiv 0$ in an open neighbourhood of $S$ for each $k$. In particular,
    \begin{equation*}
        \varphi_k \psi \to \psi \text{ in } W^{s_0,p_0}_0(U').
    \end{equation*}

    We are now in position to wrap up the argument: By well-known Jacobian estimates (see, e.g. \cite[Proposition 1]{gladbach_olbermann}), we have
    \begin{equation}
        \operatorname{Det} D^2f \in (W^{s_0,p_0}_0(U'))^*.
    \end{equation}
    Hence, using $\varphi = \varphi_k$ in \eqref{Eq:2} and sending $k \rightarrow \infty$, we obtain
    \begin{align*}
        \operatorname{Det} D^2f [\psi] &= \lim_{k \to \infty} \operatorname{Det} D^2 f [\varphi_k \psi]\\
        &= \lim_{k \to \infty} \int_{U'} K_g(\Phi) (1+|Df|^2)^2 \varphi_k \psi~dx\\
        &= \int_{U'} K_g(\Phi) (1+|Df|^2)^2 \psi~ dx.
    \end{align*}
    The proof is complete.
\end{proof}
It is well known that capacity is related to Hausdorff measure. 
\begin{corollary} \label{cor:hausdorff}
    Let $U, g, s, p, f, U'$ be as in Theorem \ref{thm:capacity} and $\sigma := \frac2{p-2} (sp-2)$. If $S \subset U$ a compact set with finite $\sigma$-dimensional Hausdorff measure, then $f$ satisfies the Gauss equation \eqref{eq:gauss2} in all of $U'$.
\end{corollary}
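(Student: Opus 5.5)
The plan is to reduce Corollary \ref{cor:hausdorff} to Theorem \ref{thm:capacity}. It suffices to show that a compact set $S$ with $\mathcal{H}^\sigma(S) < \infty$ has $(s_0,p_0)$-capacity zero, where $s_0 = 2(1-s)$ and $p_0 = \frac{p}{p-2}$ are as in \eqref{eq:s0p0}. The first step is to identify $\sigma$ with the critical dimension $n - s_0 p_0$ for $n=2$:
\begin{equation*}
    2 - s_0 p_0 = 2 - \frac{2(1-s)p}{p-2} = \frac{2(p-2) - 2p + 2sp}{p-2} = \frac{2(sp-2)}{p-2} = \sigma.
\end{equation*}
Since $sp \geq 2$, we have $0 \le \sigma < 2$ and $s_0p_0 \le 2$.

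The second step invokes the classical relation between Bessel/Sobolev capacities and Hausdorff measure, e.g.\ \cite[Theorem 5.1.9]{adams_hedberg} together with the comparison of $W^{s_0,p_0}$ and Bessel potential capacities \cite{adams_hedberg}. This relation says that if $0 < s_0p_0 \le n$ and $\mathcal{H}^{n-s_0p_0}(K) < \infty$, then $\operatorname{Cap}_{s_0,p_0}(K) = 0$. This is exactly the remark made after Theorem \ref{thm:capacity}. Applying it gives $\operatorname{Cap}_{s_0,p_0}(S) = 0$.

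Since $S$ is compact, every hypothesis of Theorem \ref{thm:capacity} then holds. The regularity $u \in W^{1+\frac23,3}_{loc}(U\setminus S)$ is implicitly part of the setting carried over from Theorem \ref{thm:capacity}. The Gauss equation \eqref{eq:gauss2} therefore holds on all of $U'$.

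The main obstacle is the borderline cases of the capacity--Hausdorff comparison. These are $s_0p_0 = n$ (equivalently $\sigma = 0$, i.e.\ $sp = 2$, where finite $\mathcal{H}^0$ means $S$ is finite and one needs points to have zero $(s_0,p_0)$-capacity in the critical case), together with possibly $p_0 = 1$. They are handled either by citing the precise statement in \cite{adams_hedberg}, or directly: cover $S$ by balls $B(x_j,r_j)$ with $\sum r_j^\sigma \le C$, use logarithmic cutoffs adapted to each ball, and sum their $W^{s_0,p_0}$ norms, which scale like $r_j^{\,n-s_0p_0}$ times a factor that can be made small. If only the weaker estimate (capacity bounded rather than zero) is available at the critical dimension, I would instead use that $\mathcal{H}^\sigma(S)<\infty$ implies $\mathcal{H}^{\sigma'}(S)=0$ for $\sigma'>\sigma$. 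That route only covers slightly larger exponents, though, so citing the sharp result is the intended route.
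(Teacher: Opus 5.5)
Your proposal is correct and follows the paper's proof. The paper likewise invokes \cite[Theorem 5.1.9]{adams_hedberg} to deduce $\operatorname{Cap}_{s_0,p_0}(S)=0$ from $\mathcal{H}^{\sigma}(S)<\infty$ (with $\sigma = 2-s_0p_0$, as you check) and then applies Theorem \ref{thm:capacity}; your worry about $p_0=1$ is moot, since $p\ge 2/s>2$ is finite, so $p_0=\frac{p}{p-2}\in(1,\infty)$ and $s_0\in(0,1)$.
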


\begin{proof}
    By {\cite[Theorem 5.1.9]{adams_hedberg}}, $\mathcal{H}^\sigma(S) < \infty$ implies that the $(s_0,p_0)$ capacity of $S$ is zero, with $s_0, p_0$ as in \eqref{eq:s0p0}.
\end{proof}

\section{Positive curvature: convexity and regularity} \label{sec:positive}

In this section, we apply the Gauss equation of Theorem \ref{thm:Gauss_23} to prove convexity and regularity results for $C^1 \cap W^{1+\frac23,3}$ isometric immersions of positively-curved smooth surfaces into $\R^3$. The main results are Theorems \ref{thm:rigidity_closed} and \ref{thm:rigidity_caps}, which treat respectively the cases of closed surfaces and convex caps. 

We outline now the key points in the proofs of Theorems \ref{thm:rigidity_closed} and \ref{thm:rigidity_caps}. Recall the important works by Conti, De Lellis, and Székelyhidi \cite{conti_deLellis_szekelyhidi_12} and Pakzad \cite{pakzad_24} where similar results were obtained under different regularity assumptions. In \cite{conti_deLellis_szekelyhidi_12}, the isometric embeddings are assumed to be of class $C^{1,\alpha}(\mathbb{S}^2;\R^3)$ with $\alpha > \frac23$ and the smooth metric $g$ has $K_g > 0$. In \cite{pakzad_24}, the regularity assumption remains the same but the curvature assumption is weakened to $K_g \geq 0$. Using the Gauss equation Theorem \ref{thm:Gauss_23} we are able to adapt their line of argument to our weaker $C^1 \cap W^{1+\frac23,3}$ regularity assumption.
\begin{enumerate}
    \item We apply the Gauss equation from Theorem \ref{thm:Gauss_23} to show that any local graphical representation $f$ satisfies $\Det D^2 f \geq 0$.
    \item We prove a lower bound and integrability result for the local Brouwer degree of $Df$ (see Proposition \ref{prop:deg_when_mu_positive}).
    \item We show that the graph of $f$ has bounded extrinsic curvature and that all regular points are elliptic, in the sense of Pogorelov (see Proposition \ref{prop:graph_bec}). The convexity of the embedded surface then follows from Pogorelov's theory. Theorem \ref{thm:rigidity_closed} follows. We further show that $\Det D^2f$ agrees with the Alexandroff measure. Theorem \ref{thm:rigidity_caps} follows from the regularity theory of Alexandroff solutions for Monge-Ampère equations.
\end{enumerate}

The section is structured as follows. In \S \ref{sec:pogorelov} we recall facts from Pogorelov's theory of surfaces of bounded extrinsic curvature (c.f. \cite{pogorelov_73}) that we will use. In \S \ref{sec:deg_estimates} we prove Proposition \ref{prop:deg_when_mu_positive}. In \S \ref{sec:graphical_surf_is_BEC} we study surfaces given as graphs of scalar functions $f: U \subset \R^2 \to \R$, and we show in Proposition \ref{prop:graph_bec} that such a surface has bounded extrinsic curvature and nonnegative curvature in the Pogorelov sense if $\Det D^2f \geq 0$ as a distribution and $f \in C^1 \cap W^{1+\frac23,3}$. We also show that, subject to a planar boundary condition, the graphical surface is convex. Finally, in \S \ref{sec:proofs_convexity_theorems} we deduce Theorems \ref{thm:rigidity_closed} and \ref{thm:rigidity_caps}.

\subsection{Some facts from Pogorelov's theory} \label{sec:pogorelov}
For the convenience of the reader, we recall a few relevant definitions and statements about surfaces of bounded extrinsic curvature from Pogorelov's monograph \cite{pogorelov_73}, which we will use in the proofs of Theorems \ref{thm:rigidity_closed} and \ref{thm:rigidity_caps}. Readers who are familiar with this theory can safely skip ahead to section \S \ref{sec:deg_estimates}.

To fix terminology, we say that $\Sigma \subset \R^3$ is a $C^1$ embedded surface if for every $x_0 \in \Sigma$, there exists an open neighbourhood $U$ of $x_0$ in $\R^3$ such that $U \cap \Sigma$ is the graph of a $C^1$ function of two variables. Such surfaces need not be complete, and we do not include the boundary in $\Sigma$. There is a continuous map
\begin{equation}
    \nu : \Sigma \to \mathbb{S}^2
\end{equation}
called the Gauss map, unique up to global sign, such that $\nu(x)$ is orthogonal to $T_x\Sigma$ for any $x \in \Sigma$.

In the sequel, $\mathcal{H}^2$ denotes the two-dimensional Hausdorff measure.

\begin{definition}
    A $C^1$ embedded surface $\Sigma \subset \R^3$ has bounded extrinsic curvature if there exists a constant $C < \infty$ such that, for any finite family $\{E_i\}_{i=1}^n$ of pairwise disjoint relatively closed subsets of $\Sigma$,
    \begin{equation*}
        \sum_{i=1}^n \mathcal{H}^2(\nu(E_i)) \leq C.
    \end{equation*}
\end{definition}
\begin{definition}
    Let $\Sigma \subset \R^3$ be a $C^1$ embedded surface with bounded extrinsic curvature. The absolute curvature $\sigma^0$ is defined on subsets of $\Sigma$ as follows. If $O \subset \Sigma$ is open, then
    \begin{equation*}
        \sigma^0(O):= \sup_{E_i \subset O, \text{ pairwise disjoint, closed } }  \sum_{i=1}^N \mathcal{H}^2(\nu(E_i)).
    \end{equation*}
    For $A \subset \Sigma$ arbitrary,
    \begin{equation*}
        \sigma^0(A) := \inf_{ \text{open sets } O \supseteq A} \sigma^0(O).
    \end{equation*}
\end{definition}
The absolute curvature is a Borel measure on $\Sigma$ (it is countably additive \cite[Theorem 2, p. 590]{pogorelov_73}) and it satisfies $\mathcal{H}^2(\nu(A)) \leq \sigma^0(A)$.
\begin{definition} \label{def:reg_pts}
    Let $\Sigma \subset \R^3$ be a $C^1$ embedded surface with bounded extrinsic curvature. A point $x \in \Sigma$ is called regular if it has an open neighbourhood $U_x$ in $\Sigma$ such that $\nu(y) \neq \nu(x)$ for all $y \in U_x \setminus \{x\}$. Regular points $x$ are classified according to the intersection $U_x \cap T_x \Sigma$ of the tangent plane with any sufficiently small open neighbourhood of $x$ in $\Sigma$. We say that $x$ is:
    \begin{enumerate}
        \item \textit{elliptic} if $U_x \cap T_x\Sigma = \{x\}$;
        \item \textit{parabolic} if $U_x \cap T_x \Sigma$ is a union of two simple curves issuing from $x$;
        \item \textit{hyperbolic} if $U_x \cap T_x\Sigma$ is a union of four simple curves issuing from $x$;
        \item \textit{flat}, otherwise, in which case $U_x \cap T_x \Sigma$ must be a union of an even number $2n$, $n \geq 3$, of simple curves issuing from $x$.
    \end{enumerate}
\end{definition}

The classification of regular points is characterised by the index of the Gauss map. If $x \in \Sigma$ is regular, consider any sufficiently small neighbourhood $U$ of $x$ in $\Sigma$, homeomorphic to a disc with boundary curve $\partial U = \gamma$. Then, $\nu(x)$ does not belong to $\nu(\gamma)$ and we define $i(x)$ to be the winding number of $\nu(\gamma)$ about $\nu(x)$. This is an integer which is independent of the choice of such $\gamma$.

\begin{proposition} {\cite[Lemma, p. 594]{pogorelov_73}} \label{prop:index_at_regular_points}
    Let $\Sigma \subset \R^3$ be a surface of bounded extrinsic curvature and $x \in \Sigma$ be a regular point. Then, $i(x)$ is equal to $+1, -1, 0$, or a number $<-1$ if $x$ is, respectively, elliptic, hyperbolic, parabolic, or flat.
\end{proposition}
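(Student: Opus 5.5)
We would reduce the statement to a planar degree computation for the gradient of a scalar function and then read off the degree from the sign pattern of that function near its isolated critical point.

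\textbf{Step 1: reduction to a graph.} Fix a regular point $x \in \Sigma$. After a rigid motion, assume $x = 0$ and $T_x\Sigma = \{x_3 = 0\}$. Near $x$ the surface $\Sigma$ is then the graph of a $C^1$ function $f$ over a disc $B_{r_0}$, with $f(0) = 0$ and $Df(0) = 0$. The Gauss map is $\nu = (-Df, 1)(1+|Df|^2)^{-1/2}$. The map $p \mapsto (-p,1)(1+|p|^2)^{-1/2}$ is a homeomorphism from $\R^2$ onto the open upper hemisphere, with a fixed orientation sign. Hence, for a small circle $\gamma = \partial B_r$, the index $i(x)$ equals, up to a fixed global sign convention, the winding number $\deg(Df, B_r, 0)$. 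This winding number is well defined because regularity says exactly that $Df \neq 0$ on $\overline{B_r}\setminus\{0\}$. The sign convention would be fixed by checking the model $f = |y|^2$, which must give $+1$. The set $U_x \cap T_x\Sigma$ is $\{f = 0\}$. By Definition \ref{def:reg_pts}, this set is either $\{0\}$ or a union of $2n$ simple arcs issuing from $0$.

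\textbf{Step 2: the sign pattern on a circle.} Choose $r$ small enough that each of the $2n$ arcs meets $\partial B_r$. Using that $Df \ne 0$ away from $0$, one has to show that each arc meets $\partial B_r$ exactly once, or else replace $\partial B_r$ by a Jordan curve that crosses each arc once; the index is independent of the curve. The arcs then cut the curve into $2n$ sub-arcs on which $f$ has alternating strict signs. In the elliptic case ($n = 0$), $f$ has one strict sign on the punctured disc, so $0$ is a strict local extremum.

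\textbf{Step 3: computing the degree.} To compute $\deg(Df, B_r, 0)$ from the sign pattern, we would deform $Df$ on $\gamma$ through nonvanishing fields. In the elliptic case, say $f > 0$, replace $\gamma$ by the boundary of the component of $\{f < c\}$ containing $0$ for a small $c>0$; this boundary is a Jordan curve since $0$ is an isolated critical point. On it $Df$ is a nonzero outward normal, so its winding number is $+1$.

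In general we would use a Bendixson-type count. Along $\gamma$, track the angle between $Df$ and the outward radial direction. At a crossing point of a zero arc, $Df$ is transverse to the arc and points into the positive sector. Between consecutive crossings, $f$ keeps one sign, and the mean value theorem along the arc, together with $f = 0$ at both endpoints, forces $Df$ to sweep from one side to the other. Adding the contributions, each of the $2n$ sectors rotates $Df$ by $-\pi$ relative to the tangent direction of $\gamma$, while $\gamma$ itself turns by $2\pi$. This gives $\deg = 1 - n$, that is, $0$ for $n = 1$, $-1$ for $n = 2$, and $1 - n < -1$ for $n \ge 3$. These are exactly the parabolic, hyperbolic and flat cases.

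The main obstacle is making the sector-by-sector rotation count rigorous when $f$ is only $C^1$. If the direct argument proves too delicate, we would first approximate $f$ by $f_\epsilon = f*\rho_\epsilon + \ell_\epsilon$ with small generic linear functions $\ell_\epsilon$. For small $\epsilon$ the degree on $\partial B_r$ is unchanged, since $Df_\epsilon \to Df$ uniformly and $Df \ne 0$ on $\partial B_r$, and the zero arcs of $f_\epsilon$ still cross $\partial B_r$ with the same alternating sign pattern. For smooth $f_\epsilon$ the Bendixson index formula for gradient fields (no elliptic sectors, $2n$ hyperbolic sectors) gives the count $1 - n$, and we would pass to the limit.
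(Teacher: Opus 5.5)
The paper gives no proof of this statement; it quotes Pogorelov's lemma. So I can only assess your argument on its own terms.

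Your Step 1 reduction (index $=\deg(Df,B_r,0)$, with $Df\neq 0$ on $\overline{B_r}\setminus\{0\}$) is sound. So is your elliptic case via the boundary of a sublevel component.

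\textbf{The gap is the non-elliptic case of Step 3, which is the real content of the statement.} On a sub-arc of $\gamma$ between consecutive zeros of $f|_\gamma$, you use only three facts: $f$ has a fixed sign there, vanishes at the ends, and $Df$ points into the positive side at the ends. This tells you the angle between $Df$ and $\dot\gamma$ passes from the forward to the backward half-plane, but not by how much. The same data hold on the extra sign intervals created when a zero arc crosses $\gamma$ several times almost tangentially. An example is a zero arc of $(y_1^2-y_2^2)\circ h$, with $h$ a diffeomorphism bending it so that it weaves across $\partial B_r$. On those intervals $Df$ turns by almost nothing relative to $\dot\gamma$. So the mean-value reasoning cannot give $-\pi$ per sector; any argument using only this data would produce a $\gamma$-dependent count. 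Your Step 2 hedge (a curve meeting each arc once) is necessary but not sufficient, and you leave that curve unconstructed.

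\textbf{What is missing is a curve on which the normal component of $Df$ has a definite sign in each sector.} You can get it by extending your own elliptic trick:
\begin{enumerate}
\item Pick $q_k\in a_k$ and a short segment $\tau_k$ through $q_k$ in the direction $Df(q_k)$, on which $f$ is strictly monotone.
\item In sector $j$ (sign $\sigma_j$), work in the region cut off by $\tau_j$, $\tau_{j+1}$ and an outer curve on which $|f|>c$. Take the component of $\{f=\sigma_j c\}$ starting on $\tau_j$.
\item This component is a single arc ending on $\tau_{j+1}$. Since $0$ is the only critical point, the regular level set has no closed components there, and it has exactly one endpoint on each of $\tau_j$ and $\tau_{j+1}$.
\end{enumerate}
Along the resulting Jordan curve, $Df$ is the outward normal on positive level arcs and the inward normal on negative ones. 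On each junction segment, $Df\cdot\dot\gamma$ has a fixed sign. Hence each of the $2n$ junctions contributes exactly $-\pi$ of relative turning (corners included), and $\deg(Df,B_r,0)=1-n$.

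\textbf{Your fallback does not work as stated.} A generic perturbation $f*\rho_\epsilon+\ell_\epsilon$ typically has several nondegenerate critical points in $B_r$ and an uncontrolled zero set inside. So there is no single singular point with $2n$ hyperbolic sectors to which Bendixson's formula could apply. That formula also presupposes a finite sectorial decomposition and an identification of hyperbolic sectors with the zero arcs, neither of which you provide. Preserving the sign pattern of $f$ on $\partial B_r$ is not enough, because that pattern does not determine the degree. For instance, $1+|y|^2$ and $3+y_1$ are both positive on the unit circle, yet their gradients have degrees $1$ and $0$. What fixes the degree is the interior sector structure of $f$, and the perturbation destroys it.
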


It is a fact \cite[Theorem 4, p. 591]{pogorelov_73} that the set $\mathcal{N} \subset \Sigma$ of nonregular points has zero absolute curvature: $\sigma^0(\mathcal{N}) = 0$. 

\begin{definition}
    Let $\Sigma \subset \R^3$ be a $C^1$ embedded surface with bounded extrinsic curvature. The positive and negative curvatures $\sigma^\pm$ are defined by
    \begin{equation*}
        \sigma^\pm(A) := \sigma^0(A\cap\Sigma_\pm), \quad A \subseteq \Sigma,
    \end{equation*}
    where $\Sigma_+$ is the set of elliptic points of $\Sigma$ and $\Sigma_-$ is the set of hyperbolic points. We say $\Sigma$ has nonnegative (resp. nonpositive) extrinsic curvature if $\sigma^-(\Sigma) = 0$ (resp. $\sigma^+(\Sigma) = 0$).
\end{definition}

We also record the following fact, which follows from \cite[Theorem 13, p. 601]{pogorelov_73}.
\begin{proposition} \label{prop:elliptic_exists}
    Let $\Sigma$ be a surface of bounded extrinsic curvature homeomorphic to $\mathbb{S}^2$. Then $\Sigma$ contains an elliptic point.
\end{proposition}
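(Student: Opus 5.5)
The plan is to prove this by a degree-counting argument for the Gauss map $\nu : \Sigma \to \mathbb{S}^2$, which is continuous since $\Sigma$ is $C^1$. A compact embedded surface homeomorphic to $\mathbb{S}^2$ bounds a region in $\R^3$, so I fix the outward orientation of $\nu$. I will show three things in turn: that $\deg \nu = 1$, that $\deg\nu$ equals the sum of the indices $i(x)$ over the preimages of a generic point $y \in \mathbb{S}^2$, and that these indices take only the values listed in Proposition \ref{prop:index_at_regular_points}. Together these force some preimage to have index $+1$, and hence to be elliptic.

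\textbf{Step 1: $\deg\nu = 1$.} I would approximate $\Sigma$ in $C^1$ by smooth closed embedded surfaces $\Sigma_k$, for instance normal graphs over $\Sigma$ obtained by mollifying the local graph functions and gluing them with a partition of unity. The outward normals of $\Sigma_k$, transported to $\Sigma$ through the nearest-point projection, converge uniformly to $\nu$, so they are homotopic to $\nu$ for large $k$ and have the same degree. For a smooth embedded sphere, Gauss--Bonnet gives $\deg \nu_k = \frac{1}{4\pi}\int K = \chi/2 = 1$.

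\textbf{Step 2: the degree as a sum of indices.} Let $\mathcal{N}$ be the set of nonregular points. By \cite[Theorem 4, p. 591]{pogorelov_73}, $\sigma^0(\mathcal{N}) = 0$, and $\mathcal{H}^2(\nu(A)) \le \sigma^0(A)$ then gives $\mathcal{H}^2(\nu(\mathcal{N})) = 0$. Bounded extrinsic curvature gives $\int_{\mathbb{S}^2} \#\nu^{-1}(y)\, d\mathcal{H}^2(y) \le \sigma^0(\Sigma) < \infty$. This should follow from the definition of $\sigma^0$ by partitioning $\Sigma$ into small disjoint closed pieces and refining. Hence for $\mathcal{H}^2$-a.e. $y$, the preimage $\nu^{-1}(y)$ is finite and consists only of regular points. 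For such $y$ the preimages are isolated, so the local degree of $\nu$ at each $x \in \nu^{-1}(y)$ is defined. It equals the winding number of $\nu$ along a small loop around $x$ about $\nu(x) = y$, which is exactly $i(x)$ once orientations are matched. Additivity of the degree then gives
\begin{equation*}
1 = \deg \nu = \sum_{x\in \nu^{-1}(y)} i(x).
\end{equation*}

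\textbf{Step 3: conclusion.} By Proposition \ref{prop:index_at_regular_points}, every regular point has index $+1$ (elliptic), $0$ (parabolic), $-1$ (hyperbolic) or an integer $<-1$ (flat). A finite sum of such integers can equal $1$ only if at least one term is $+1$. So some $x \in \nu^{-1}(y)$ is elliptic.

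The main obstacle is the sign and orientation bookkeeping in Step 2: one must check that Pogorelov's index, defined with the orientation of the boundary curve $\gamma$ in $\Sigma$ and of $\mathbb{S}^2$ around $\nu(x)$, agrees with the local Brouwer degree for the outward-oriented $\nu$, so that elliptic points contribute $+1$ and not $-1$. I would fix this by checking a convex smooth model, such as the round sphere with the outward normal, where every point is elliptic and the degree is $1$. The finiteness of $\#\nu^{-1}(y)$ a.e. is the second delicate point. If it cannot be derived directly from the definition of $\sigma^0$, I would instead invoke the corresponding statement in \cite{pogorelov_73}, from which \cite[Theorem 13, p. 601]{pogorelov_73} is derived.
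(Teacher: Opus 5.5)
Your argument is correct, but it is not the paper's route. The paper gives no argument of its own and simply quotes \cite[Theorem 13, p. 601]{pogorelov_73}. You instead rederive a Gauss--Bonnet-type identity from three ingredients: the classical identity $\deg\nu=\chi(\mathbb{S}^2)/2=1$, the vanishing of $\sigma^0$ on irregular points, and Pogorelov's index lemma (Proposition \ref{prop:index_at_regular_points}).

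What this buys is more than existence. You get $\sum_{x\in\nu^{-1}(y)} i(x)=1$ for a.e.\ $y$, so almost every direction is the normal at an elliptic point, and \eqref{eq:curvature_and_indicatrix} then gives $\sigma^+(\Sigma)\ge 4\pi$.

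The cost is the orientation bookkeeping, which you rightly flag as essential: with the opposite sign convention a single hyperbolic point could carry the degree. It is settled by computing both the index and the local degree with $\Sigma$ oriented by $\nu$, and letting the round sphere fix the sign.

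Two minor points:
\begin{itemize}
\item You do not need Theorem 7 for finiteness. Regular preimages are isolated and $\nu^{-1}(y)$ is compact, so it is finite.
\item In Step 1, nearest-point transport is not well defined for a merely $C^1$ surface, whose reach may be zero. It is cleaner to approximate a $C^1$ parametrisation $\phi:\mathbb{S}^2\to\R^3$ in $C^1$ by smooth embeddings $\phi_k$, and compare $\nu_k\circ\phi_k$ with $\nu\circ\phi$ uniformly.
\end{itemize}

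If only existence is wanted, there is a shorter argument that avoids degrees, orientations and the topology of $\Sigma$.
\begin{itemize}
\item Choose $y$ with $\nu^{-1}(\{y,-y\})\cap\mathcal{N}=\emptyset$. This holds for a.e.\ $y$, since $\mathcal{H}^2(\nu(\mathcal{N}))=0$.
\item Let $x$ maximise $\langle\cdot,y\rangle$ on the compact surface $\Sigma$. Then $\nu(x)=\pm y$, so $x$ is regular.
\item Take $U_x$ small enough that $\nu(U_x)$ stays near $\nu(x)$. Any other point of $U_x\cap T_x\Sigma$ would be another maximiser, hence have normal $\nu(x)$, contradicting regularity.
\end{itemize}
So $x$ is elliptic.
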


The curvatures $\sigma^\pm$ and $\sigma^0$ are also related to the preimages under the Gauss map \cite[Theorem 7, p. 594]{pogorelov_73}:
\begin{equation} \label{eq:curvature_and_indicatrix}
    \sigma^0(A)= \int_{\mathbb{S}^2} n_A(y)~d\sigma(y), \quad \sigma^\pm(A) = \int_{\mathbb{S}^2} n_A^\pm(y)~d\sigma(y)
\end{equation}
where for $y \in \mathbb{S}^2$, $n_A(y) := \#\big(\nu^{-1}(y) \cap A\big)$ and $n_A^\pm(y) := \#\big( \nu^{-1}(y) \cap A \cap \Sigma_\pm \big)$.

By \cite[Theorem 12, p. 600]{pogorelov_73}, if $\Sigma$ contains an elliptic (resp. hyperbolic) point, then $\sigma^+(\Sigma) > 0$ (resp. $\sigma^-(\Sigma) < 0$). The following important facts hold about surfaces of nonnegative curvature. Recall that, in Pogorelov's terminology, a surface $\Sigma \subset \R^3$ is a convex surface if there exists a convex body $K \subset \R^3$ such that $\Sigma \subset \partial K$.

\begin{proposition}{\cite[Theorems 1 and 2, p. 613, 615]{pogorelov_73}} \label{prop:pog_convexity}
Let $\Sigma$ be a surface of bounded extrinsic curvature, of nonnegative curvature.
\begin{enumerate}
    \item[(i)] If $\Sigma$ is complete and $\sigma^+(\Sigma) > 0$, then $\Sigma$ is either a closed convex surface or an unbounded convex surface. 
    \item[(ii)] Suppose that $\Pi$ is a plane intersecting the surface. If $\Sigma'$ is any component of $\Sigma \setminus\Pi$ such that $\partial \Sigma' \subset \Pi$, then $\Sigma'$ is a convex surface.
\end{enumerate}    
\end{proposition}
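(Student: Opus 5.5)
Since Proposition \ref{prop:pog_convexity} is quoted from Pogorelov's monograph, we do not reprove it in full. The route we would take to justify it is the classical one: first show that nonnegative extrinsic curvature forces \emph{local} convexity, and then pass from local to global convexity by a Hadamard/Tietze--Nakajima-type argument. In case (i) this global step uses completeness; in case (ii) it uses the planar boundary.

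\textbf{Step 1 (local convexity).} Fix $x\in\Sigma$. Since $\sigma^-(\Sigma)=0$, the formula \eqref{eq:curvature_and_indicatrix} shows that for $\mathcal H^2$-a.e.\ $y\in\mathbb S^2$ the fibre $\nu^{-1}(y)$ contains no hyperbolic points. Combined with $\sigma^0(\mathcal N)=0$, this says the Gauss map has nonnegative local degree almost everywhere: by Proposition \ref{prop:index_at_regular_points}, each regular preimage contributes index $+1$ or $0$. We would then show that, on a small disc $U_x\ni x$, the tangent plane $T_x\Sigma$ does not separate $U_x$ into more than two ``sectors''. If it did, the winding number of $\nu(\partial U_x)$ about nearby values would be negative on a set of positive measure, contradicting the nonnegative degree. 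It follows that a neighbourhood of $x$ lies on one side of a supporting plane through $x$, i.e.\ $\Sigma$ is locally convex at $x$. Non-regular points need separate handling: we would use that the set $\mathcal N$ is $\sigma^0$-null and approximate non-regular points by regular ones, so that supporting planes pass to the limit.

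\textbf{Step 2 (global convexity).} For (i), $\sigma^+(\Sigma)>0$ gives an elliptic point, i.e.\ a point of strict local convexity. A complete connected locally convex surface containing such a point is the boundary of a convex body, either closed or unbounded; this is van Heijenoort's extension of Hadamard's theorem to the $C^1$ setting. We would prove it by showing that the set of points $x$ at which the local supporting plane is global is open and closed in $\Sigma$. Openness follows from strict convexity propagating along $\Sigma$, and closedness from continuity of $\nu$. For (ii), we would consider $\Sigma'$ together with its boundary in $\Pi$. Local convexity in the interior and $\partial\Sigma'\subset\Pi$ allow us to close $\Sigma'$ up: reflect it across $\Pi$, or adjoin the planar region bounded by $\partial\Sigma'$. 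We would then run the same open/closed argument for supporting planes, using the fact that the height over $\Pi$ attains its extremum on $\Sigma'$ in the interior.

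\textbf{Main obstacle.} We expect Step 1 at non-regular and parabolic points to be the main difficulty, namely converting the measure-theoretic condition $\sigma^-=0$ into a pointwise geometric supporting-plane property. This is exactly where Pogorelov's index theory is used. If we were to carry the argument out ourselves, we would first try a degree argument on small discs together with lower semicontinuity of supporting planes under $C^1$ convergence.
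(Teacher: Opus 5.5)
There is a genuine gap, and it is in Step 1. Your claim that $\sigma^-(\Sigma)=0$ makes $\Sigma$ locally convex at every point is false. Take $\Sigma=\{x_3=x_1^3\}$, or any cylinder over a plane curve with an inflection point.
\begin{itemize}
\item Its Gauss map takes values in an arc of a great circle, so $\sigma^0\equiv 0$.
\item $\nu$ is constant along each ruling $\{x_1=c\}$, so no point is regular. Hence $\Sigma_\pm=\emptyset$ and $\sigma^-(\Sigma)=0$.
\item So $\Sigma$ is a complete surface of bounded extrinsic curvature and nonnegative curvature.
\item Yet at every point of $\{x_1=0\}$ it crosses its tangent plane $\{x_3=0\}$, which for a $C^1$ surface is the only possible supporting plane there.
\end{itemize}

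Both halves of your local argument fail on this example.
\begin{itemize}
\item \emph{Regular points.} The sign of the degree only excludes negative index. A parabolic point (index $0$, two sectors) survives, and there $\Sigma$ lies on both sides of $T_x\Sigma$: if it lay on one side, every point of $U_x\cap T_x\Sigma$ would have normal $\nu(x)$, contradicting regularity. So ``at most two sectors'' does not give a supporting plane. Excluding index $0$ needs a lower bound $\deg\ge 1$. In the paper's graphical setting this is exactly what Kirchheim's perturbation provides (Proposition~\ref{prop:deg_when_mu_positive}, used in Step~5 of the proof of Proposition~\ref{prop:graph_bec}).
\item \emph{Non-regular points.} Approximation by regular points is impossible when, as in this example, there are none.
\end{itemize}
Any purely local argument applies verbatim to a small piece of this example. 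So no version of Step 1 using only $\sigma^-=0$ near $x$ can be correct. This is precisely why $\sigma^+(\Sigma)>0$ is assumed in (i). It also leaves the Hadamard--van Heijenoort globalisation in Step 2 with no input.

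The global hypotheses must therefore be used to \emph{produce} convexity, not merely to globalise it: the elliptic point together with completeness in (i), and the planar boundary in (ii). The paper does not reprove the proposition, but Remark~\ref{rem:pog_convex_cap} indicates the mechanism of Pogorelov's proof. One works with the pieces of $\Sigma$ cut off by $\Pi$ and by perturbations of $\Pi$ whose intersection with $\Sigma$ is closed and lies on one side. Convexity of these cut-off pieces is established directly, not through a pointwise local-convexity lemma.

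Your closing-up devices in (ii) also fail as stated.
\begin{itemize}
\item Reflecting $\Sigma'$ across $\Pi$ destroys local convexity along the seam in general. Doubling a spherical cap larger than a hemisphere produces an inward crease.
\item Adjoining the planar region bounded by $\partial\Sigma'$ gives a locally convex surface only if you already know that $\Sigma'$ meets $\Pi$ convexly. That is part of the conclusion.
\end{itemize}
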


\begin{remark} \label{rem:pog_convex_cap}
    A careful study of Proposition \ref{prop:pog_convexity}(ii) and Pogorelov's proof shows that the statement may be extended to the case of surfaces with boundary as follows. Let $\Sigma \subset \R^3$ be an open bounded surface of bounded extrinsic curvature, of nonnegative curvature. Assume that the boundary $\Gamma := \partial \Sigma$ of $\Sigma$ (as a subset of $\R^3$) lies in a plane $\Pi$, which we take for convenience to be $\{x_3 = 0\}$. Assume further that  $\Sigma$ lies strictly on one side of $\Pi$, e.g. $\Sigma \subset \{x_3 < 0\}$. Then, $\Sigma$ is a convex surface. This follows from Pogorelov's proof, with the only observation that we now need to use perturbations of $\Pi$ by planes whose intersection with $\Sigma$ is closed in $\R^3$ and lies entirely below $\Pi$. 
\end{remark}

\subsection{Estimates for the degree of $Df$} \label{sec:deg_estimates}

Recall that for an open set $V \subset \R^n$ and a continuous map $v \in C^0(\overline V;\R^n)$ the Brouwer degree $\deg(v,V,y)$ is well-defined for $y \in \R^n \setminus v(\partial V)$. If $v \in C^1(\overline V)$ and $y \in \R^n \setminus v(\partial V)$ is a regular value of $v$, then it agrees with the classical definition
\begin{equation*}
    \deg(v,V,y) := \sum_{x \in v^{-1}(\{y\})} \operatorname{sign}(\det Dv(x));
\end{equation*}
in the general case that $v$ is continuous, $\deg(v,V,y)$ is defined via approximation. It takes integer values and is constant on each connected component of $\R^n \setminus v(\partial V)$. In particular, it is integrable over any compact set $K \subset \R^n \setminus v(\partial V)$ since $K$ intersects only finitely many such connected components. 

The main point of this section is the following Proposition \ref{prop:deg_when_mu_positive}, which establishes a lower bound \eqref{eq:deg_geq_1} and integral estimate \eqref{eq:deg_in_L1} for the degree of $Df$ when $f : U \subset \R^2 \to \R$ is regular enough and $\Det D^2 f \geq 0$.

\begin{proposition} \label{prop:deg_when_mu_positive}
    Let $\Omega \subset \R^2$ be a domain and assume that $f \in (W^{1+\frac23,3}_{loc} \cap C^1)(\Omega)$ satisfies $\Det D^2 f = \mu$ in $\mathcal{D}'(\Omega)$, where $\mu \in \mathcal{M}^+(\Omega)$. Then, for any open set $V \subset \subset \Omega$, the local degree of $Df$ is positive and integrable over $\R^2 \setminus Df(\partial V)$:
    \begin{equation} \label{eq:deg_geq_1}
        \deg(Df, V,\cdot) \geq 1_{Df(V) \setminus Df (\partial V)},
    \end{equation}
    \begin{equation} \label{eq:deg_in_L1}
        \int_{\R^2 \setminus Df(\partial V)} \deg(Df, V,y)~dy\leq \mu(V).
    \end{equation}
\end{proposition}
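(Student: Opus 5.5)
The plan is to connect the distributional Hessian determinant with the Brouwer degree of $Df$ through the classical degree formula applied to smooth approximations. We then pass to the limit using the continuity of $\Det$ from $W^{\frac23,3}$ into $(W^{\frac23,3}_0)^*$ (see \cite[Proposition 1]{gladbach_olbermann}). Fix $V\subset\subset\Omega$ and set $w:=Df\in C^0\cap W^{\frac23,3}(\overline V;\R^2)$. Let $f_\epsilon := f*\rho_\epsilon$, so that $w_\epsilon:=Df_\epsilon\to w$ uniformly on $\overline V$ and in $W^{\frac23,3}$ near $\overline V$.

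The first step is the change-of-variables (area) formula for smooth maps. For $\phi\in C^\infty_c(\R^2\setminus w(\partial V))$ and $\epsilon$ small enough that $\phi=0$ on $w_\epsilon(\partial V)$, we have
\begin{equation*}
\int_{\R^2}\phi(y)\deg(w_\epsilon,V,y)\,dy=\int_V \phi(w_\epsilon(x))\det Dw_\epsilon(x)\,dx .
\end{equation*}
The left side converges to $\int\phi\,\deg(w,V,\cdot)\,dy$, because the degree is stable under uniform convergence on compact sets avoiding $w(\partial V)$. For the right side, the integrand $\phi\circ w_\epsilon$ is continuous, and it is compactly supported in $V$: near $\partial V$, $w_\epsilon$ stays close to $w(\partial V)$, where $\phi$ vanishes. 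Moreover $\phi\circ w_\epsilon\to\phi\circ w$ in $W^{\frac23,3}_0$ by Lemma \ref{lemma:composition_continuity}, with supports in a fixed compact subset of $V$. Combining this with $\det Dw_\epsilon\to\Det Dw=\mu$ in $(W^{\frac23,3}_0)^*$, we obtain
\begin{equation*}
\int\phi\,\deg(w,V,\cdot)\,dy=\mu[\phi\circ w].
\end{equation*}
The pairing on the right is a genuine integral, since $\mu\ge0$ is a Radon measure and $\phi\circ w$ is continuous with compact support in $V$.

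To get \eqref{eq:deg_in_L1}, take $0\le\phi\le1$. Then $\int\phi\deg\le\mu(V)$. Taking $\phi=\pm1$ on a connected component shows that the degree is nonnegative almost everywhere on each component of $\R^2\setminus w(\partial V)$; since it is constant on components, it is nonnegative everywhere. Letting $\phi\nearrow 1$ on $\R^2\setminus w(\partial V)$, monotone convergence gives \eqref{eq:deg_in_L1}.

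The lower bound \eqref{eq:deg_geq_1} is the main obstacle. It asserts that if $y_0=w(x_0)$ with $x_0\in V$ and $y_0\notin w(\partial V)$, then the (constant) degree on the component $C$ of $y_0$ is at least $1$. Suppose instead that $\deg=0$ on $C$. I would first localise. Take a small ball $B\ni x_0$ and let $W$ be the connected component of $w^{-1}(C)\cap V$ containing $x_0$; then $w(\partial W)\subset\partial C$, so $\deg(w,W,\cdot)$ is constant on $C$. Additivity of the degree together with nonnegativity on each such sub-piece (the argument above applied to $W$) gives $0\le \deg(w,W,y_0)\le\deg(w,V,y_0)=0$. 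On $W$ the formula yields $\mu[\phi\circ w]=0$ for all $\phi\in C_c^\infty(C)$, so $\mu(W)=0$. Hence $\Det D^2f=0$ on the open set $W$, and $w(W)\subset C$ has degree $0$ over its range. The remaining point is that a nonconstant continuous $w=Df$ with $\Det D^2 f=0$ cannot cover a neighbourhood with zero degree without being constant. I would try to exploit the gradient structure, using the fact that $\Det D^2f\ge0$ with $f\in C^1$ forces $f$ or $-f$ to be locally convex in the Alexandroff sense. Convexity would then give a monotone gradient map, for which every point of $w(W)$ has positive degree unless $\mu=0$ and $w$ degenerates onto a set of measure zero. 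I expect this convexity/monotonicity step to be the hard part; one may need Pogorelov-type arguments or a perturbation $f+\delta|x|^2$, which raises $\mu$ by a positive multiple of Lebesgue measure and makes the degree positive on a neighbourhood of $y_0$, followed by the limit $\delta\to0$ using stability of the degree.
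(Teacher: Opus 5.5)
Your derivation of the degree formula $\int\phi\,\deg(Df,V,\cdot)\,dy=\int_V\phi(Df)\,d\mu$, of the nonnegativity of the degree, and of \eqref{eq:deg_in_L1} by monotone convergence is sound. It is exactly the $\delta=0$ case of the paper's argument. The gap is in \eqref{eq:deg_geq_1}, and neither of your suggested routes closes it.

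\emph{Convexity route.} The claim that $\Det D^2f\ge 0$ forces local convexity of $\pm f$ is false in low H\"older regularity: nonconvex $C^{1,\alpha}$ solutions exist for $\alpha<\frac13$. In the present regularity it is not known; see the remark following Proposition \ref{prop:graph_alexandroff}. Moreover, the only convexity result available here, Proposition \ref{prop:graph_alexandroff}, needs a planar boundary condition and is itself derived from this proposition via Proposition \ref{prop:graph_bec}. Invoking it would therefore be circular.

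\emph{Perturbation route.} The perturbation $f+\delta|x|^2$ fails for a concrete reason:
\[
\Det D^2(f+\delta|x|^2)=\Det D^2 f+2\delta\,\Delta f+4\delta^2 .
\]
The cross term $2\delta\,\Delta f$ is a distribution of no definite sign and is not controlled by $\mu$. So you do not get $\mu$ plus a positive multiple of Lebesgue measure.

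\emph{Missing idea (Kirchheim's trick).} Perturb by a rotation instead of a dilation, $v^\delta:=Df+\delta(-x_2,x_1)$. Then
\[
\det Dv^\delta=f_{x_1x_1}f_{x_2x_2}-(f_{x_1x_2}-\delta)(f_{x_2x_1}+\delta)=\det D^2f+\delta^2,
\]
because the cross term vanishes by symmetry of the Hessian. Hence $\Det Dv^\delta=\mu+\delta^2$ in $\mathcal{D}'$. Your degree formula never used that the map is a gradient, only that it is in $C^0\cap W^{\frac23,3}$, so it gives, for $\phi\in C^\infty_c(\R^2\setminus v^\delta(\partial V))$,
\[
\int_{\R^2}\phi\,\deg(v^\delta,V,\cdot)\,dy=\int_V\phi(v^\delta)\,d\mu+\delta^2\int_V\phi(v^\delta(x))\,dx .
\]
Take $y\in v^\delta(V)\setminus v^\delta(\partial V)$ and $\phi\ge0$ supported in a ball $B_r(y)$ avoiding $v^\delta(\partial V)$, with $\phi\equiv1$ on $B_{r/2}(y)$. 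The right-hand side is at least $\delta^2|\{x\in V: v^\delta(x)\in B_{r/2}(y)\}|$, which is positive because this set is open and nonempty. Since the degree is an integer constant on $B_r(y)$, it follows that $\deg(v^\delta,V,y)\ge1$. Finally, for $y=Df(x_0)\notin Df(\partial V)$, stability of the degree under the uniform convergence $v^\delta\to Df$ on $\overline V$ gives, for small $\delta$,
\[
\deg(Df,V,y)=\deg(v^\delta,V,v^\delta(x_0))\ge1 .
\]
Your localisation to the component $W$ and the conclusion $\mu(W)=0$ are correct, but they become unnecessary with this device.
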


\begin{remark} \label{rem:deg_formula_nobdry}
    Note that $Df(\partial V)$ is not necessarily an $\mathcal{L}^2$-null set and must thus be excluded from the region of integration in \eqref{eq:deg_in_L1} as the degree $\deg(Df, V, \cdot)$ is not defined there. However, if we have in addition that $\partial V$ is piecewise smooth and $Df \in W^{\frac23,3}(\partial V)$, then $Df(\partial V)$ is $\mathcal{L}^2$-null \cite[Theorem B.1]{li_pakzad_schikorra_21} and \eqref{eq:deg_in_L1} can be restated as 
    \begin{equation*}
        \int_{\R^2} \deg(Df, V,y)~dy\leq \mu(V).
    \end{equation*}
\end{remark}

To prove Proposition \ref{prop:deg_when_mu_positive}, we use a trick due to Kirchheim \cite{kirchheim}: Instead of working directly with $Df$, we consider a slight perturbation $v^\delta = Df + \delta(-x_2,x_1)$ and send $\delta \rightarrow 0$ afterwards. We need the following variant of \cite[Lemma 3.1]{li_pakzad_schikorra_21} (c.f. \cite[Corollary 5]{pakzad_24}).

\begin{lemma} \label{lemma:Det_and_deg}
    Let $\Omega \subset \R^2$ be a domain, and assume that $f \in W^{1+\frac23,3}_{loc}\cap C^1(\Omega)$. For $\delta \in \R$ define $v^\delta(x):=Df(x) + \delta(-x_2,x_1)$. Let $V \subset \subset \Omega$ be any open set compactly contained in $\Omega$. Then, there holds
    \begin{equation} \label{eq:degree_formula}
        (\Det D^2 f)|_V  [\phi \circ v^\delta] + \delta^2 \int_V \phi (v^\delta(x))dx= \int_{\R^2} \phi(y) \deg(v^\delta, V,y)dy.
    \end{equation}
    for any $\phi \in C^\infty_c(\R^2 \setminus v^\delta(\partial V))$.
\end{lemma}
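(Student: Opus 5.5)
The plan is to prove \eqref{eq:degree_formula} first for smooth $f$ and then pass to the limit using the continuity of the distributional Jacobian on $W^{\frac23,3}$ together with the stability of the Brouwer degree under uniform convergence.

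\textbf{Smooth case.} Take $f_\epsilon \in C^\infty$, $f_\epsilon \to f$ in $C^1(\overline{V'}) \cap W^{1+\frac23,3}(V')$ for some $V \subset\subset V' \subset\subset \Omega$, and set $v^\delta_\epsilon := Df_\epsilon + \delta(-x_2,x_1)$. Then
\begin{equation*}
    Dv^\delta_\epsilon = D^2 f_\epsilon + \delta J, \qquad J = \begin{pmatrix} 0 & -1 \\ 1 & 0\end{pmatrix}.
\end{equation*}
Since $D^2 f_\epsilon$ is symmetric, $\det(A+\delta J) = \det A + \delta^2$ for symmetric $A$, and hence
\begin{equation*}
    \det Dv^\delta_\epsilon = \det D^2 f_\epsilon + \delta^2 .
\end{equation*}
For $\phi \in C^\infty_c(\R^2 \setminus v^\delta(\partial V))$, uniform convergence gives $\operatorname{supp}\phi \cap v^\delta_\epsilon(\partial V) = \emptyset$ for small $\epsilon$. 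The classical change-of-variables (area) formula for degree then yields
\begin{equation*}
    \int_V \phi(v^\delta_\epsilon)\,(\det D^2 f_\epsilon + \delta^2)\,dx = \int_{\R^2}\phi(y)\deg(v^\delta_\epsilon,V,y)\,dy .
\end{equation*}
This is valid for smooth maps with $\phi$ vanishing near $v^\delta_\epsilon(\partial V)$: prove it first for regular values and $\phi$ supported in one component, or use the standard degree-theoretic identity.

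\textbf{Right-hand side limit.} By homotopy invariance, $\deg(v^\delta_\epsilon,V,\cdot) = \deg(v^\delta,V,\cdot)$ on $\operatorname{supp}\phi$ once $\|v^\delta_\epsilon - v^\delta\|_{C^0(\overline V)} < \dist(\operatorname{supp}\phi, v^\delta(\partial V))$. The right-hand side is therefore eventually constant and equal to the desired one.

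\textbf{Left-hand side limit.} The term $\delta^2\int_V\phi(v^\delta_\epsilon)$ converges by uniform convergence. The main obstacle is the term $\int_V \phi(v^\delta_\epsilon)\det D^2 f_\epsilon$, because $\phi\circ v^\delta_\epsilon$ is not compactly supported in $V$ as a test function for the distribution on $V$. The issue is resolved as follows: $\phi\circ v^\delta$ vanishes on a neighbourhood of $\partial V$ (relative to $\overline V$), since $\phi$ vanishes near $v^\delta(\partial V)$ and $v^\delta$ is continuous. Hence $\phi\circ v^\delta 1_V$ is compactly supported in $V$; the same holds uniformly for $\phi\circ v^\delta_\epsilon$ for $\epsilon$ small, with support in a fixed compact $K\subset V$. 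So we may interpret $(\Det D^2 f)|_V[\phi\circ v^\delta]$ as $\Det D^2 f[\chi\,\phi\circ v^\delta]$ for a cutoff $\chi\in C^\infty_c(V)$ with $\chi \equiv 1$ on $K$.

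Now $\phi\circ v^\delta_\epsilon \to \phi\circ v^\delta$ in $W^{\frac23,3}(V')$ by Lemma \ref{lemma:composition_continuity} (since $\phi$ is Lipschitz and $v^\delta_\epsilon \to v^\delta$ in $W^{\frac23,3}$). Also $\det D^2 f_\epsilon = \det D(Df_\epsilon) \to \Det D^2 f$ in $(W^{\frac23,3}_0(V'))^*$ by the Jacobian estimate \cite[Proposition 1]{gladbach_olbermann}, as used in the proof of Theorem \ref{thm:Gauss_23}(ii). Pairing these, as a strongly convergent dual sequence against a strongly convergent test sequence multiplied by the fixed $\chi$, gives
\begin{equation*}
    \int_V \phi(v^\delta_\epsilon)\det D^2 f_\epsilon \to (\Det D^2 f)|_V[\phi\circ v^\delta],
\end{equation*}
which yields \eqref{eq:degree_formula}.
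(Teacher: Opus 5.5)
Your argument is correct and follows the same route as the paper. The paper's proof quotes the degree formula $\Det Dv^\delta|_V[\phi\circ v^\delta]=\int_{\R^2}\phi(y)\deg(v^\delta,V,y)\,dy$ from \cite[Lemma 3.1]{li_pakzad_schikorra_21} and then gets $\Det Dv^\delta=\Det D^2f+\delta^2$ by smooth approximation. You instead fold both steps into a single approximation, using $\det(A+\delta J)=\det A+\delta^2$ for symmetric $A$, the classical area formula, stability of the degree, and the $(W^{\frac23,3}_0)^*$-continuity of the Jacobian; this reproves the cited lemma in this special case. Your cutoff $\chi$ also makes explicit how the left-hand side is to be interpreted, which the paper leaves implicit.
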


\begin{proof}
    Fix any $V \subset\subset \Omega$ and $\phi \in C^\infty_c(\R^2 \setminus v^\delta(\partial V))$. By \cite[Lemma 3.1]{li_pakzad_schikorra_21} and the regularity assumption on $f$, there holds
    \begin{equation*}
        \Det Dv^\delta|_V [\phi \circ v^\delta] = \int_{\R^2} \phi(y) \deg(v^\delta, V, y)dy.
    \end{equation*}
    Moreover, by approximating $f$ with smooth functions it is easy to see that
    \begin{equation*}
        \Det Dv^\delta = \Det D^2f + \delta^2 \quad \text{in } \mathcal{D}'(U).
    \end{equation*}
    The result follows.
\end{proof}

\begin{proof}[Proof of Proposition \ref{prop:deg_when_mu_positive}]
    For $\delta > 0$, let $v^\delta(x):=Df(x) + \delta(-x_2,x_1)$. Recall that $\deg(v^\delta, V,\cdot)$ is integer-valued and constant on each connected component of $\R^2 \setminus v^\delta(\partial V)$. Since $\Det D^2 f \geq 0$, it follows from Lemma \ref{lemma:Det_and_deg} that $\deg(v^\delta, V,\cdot) \geq 0$ for any $\delta > 0$. We claim further that if $y \in v^\delta(V) \setminus v^\delta(\partial V)$ then $\deg(v^\delta, V, y) \geq 1$. Indeed, for any such $y$, we may take a ball $B_r(y) \subset \R^2 \setminus v^\delta(\partial V)$ with $r > 0$ and a cutoff $\phi \in C^\infty_c(B_r(y))$ with $\phi \geq 0$ and $\phi \equiv 1$ in $B_{r/2}(y)$. Then, Lemma \ref{lemma:Det_and_deg} and the constancy of $\deg(v^\delta, V, \cdot)$ in $B_r(y)$ imply that
    \begin{align*}
        \deg(v^\delta, V,y) \int_{\R^2} \phi(z) dz &\geq \delta^2 \int_V \phi(v^\delta(x))dx\\
        &\geq \delta^2 |\{x \in V : v^\delta(x) \in B_{r/2}(y)\}|.
    \end{align*}
    Since $\{x \in V : v^\delta(x) \in B_{r/2}(y)\}$ is open (since $v^\delta$ is continuous) and non-empty (since it contains the preimage of $y$ in $V$), it has positive measure. Therefore, $\deg(v^\delta, V,y) > 0$. Since $\deg(v^\delta,V,y)$ is an integer, it follows that $\deg(v^\delta, V,y) \geq 1$, as claimed.
    
    We may now follow the proof of \cite[Corollary 5]{pakzad_24} to send $\delta \searrow 0$. For completeness, we give the details. Let $y = Df(x) \in Df(V) \setminus Df(\partial V)$. Fix $r > 0$ such that $B_r(y) \subset \R^2 \setminus Df(\partial V)$. Since $v^\delta$ converges uniformly to $Df$ on $\overline V$, we may fix $\delta_0>0$ such that $\|v^\delta - Df\|_{C^0(\overline V; \R^2)} < r/2$ for all $\delta \in (0,\delta_0)$. Then,
    \begin{equation*}
        B_{r/2}(y) \subset \R^2 \setminus v^\delta(\partial V) \quad \text{for any } 0 < \delta < \delta_0,
    \end{equation*}
    and so $\deg(v^\delta, V,\cdot)$ is well-defined and constant on $B_{r/2}(y)$ for all $\delta \in (0, \delta_0)$. In particular, for all such $\delta$, we have $v^\delta(x) \in B_{r/2}(y)$. Therefore,
    \begin{equation*}
        \deg(v^\delta, V,y) = \deg(v^\delta, V,v^\delta(x)) \geq 1.
    \end{equation*}
    Moreover, by the uniform convergence $v^\delta \to Df$ in $\overline V$ as $\delta \searrow 0$, we have $\deg(v^\delta, V,y) = \deg(Df, V,y)$ for all sufficiently small $\delta$. Hence, $\deg(Df,V,y) \geq 1$ and \eqref{eq:deg_geq_1} is proved.

     To see \eqref{eq:deg_in_L1}, we take a sequence $(\phi_k) \subset C^\infty_c(\R^2 \setminus Df(\partial V))$ such that $\phi_k \geq 0$ and $\phi_k \nearrow 1_{\R^2 \setminus Df(\partial V)}$ pointwise. For each $k$, Lemma \ref{lemma:Det_and_deg} with $\delta = 0$ and $\phi = \phi_k$ gives that
    \begin{equation*}
        \int_{\R^2 \setminus Df(\partial V)} \phi_k(y) \deg(Df, V,y) dy = \int_V \phi_k(Df(x)) d\mu(x) \leq \mu(V).
    \end{equation*}
    By the monotone convergence theorem, we conclude with \eqref{eq:deg_in_L1}.
\end{proof}

\subsection{Bounded extrinsic curvature and ellipticity of regular points} \label{sec:graphical_surf_is_BEC}

\begin{proposition} \label{prop:graph_bec}
    Let $\Omega \subset \R^2$ be an open set, and $\mu \in \mathcal{M}^+(\Omega)$. Assume that $f \in C^1 \cap W^{1+\frac23,3}_{loc}(\Omega)$ satisfies 
    \begin{equation*}
        \Det D^2 f = \mu.
    \end{equation*}
    Then, if $\mu(\Omega) < \infty$, there holds
    \begin{equation} \label{eq:alex_leq_mu_closed}
        |Df(A)| \leq \mu(A)
    \end{equation}
    for any relatively closed or open subset $A \subset \Omega$. The surface $\Sigma:= G_f(\Omega)$ has bounded extrinsic curvature. Moreover, all its regular points are elliptic, so $\Sigma$ has nonnegative curvature in the sense of Pogorelov.
\end{proposition}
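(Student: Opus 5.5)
The first step is the area estimate \eqref{eq:alex_leq_mu_closed}, obtained from Proposition \ref{prop:deg_when_mu_positive}; the geometric statements then follow by relating the Gauss map of $\Sigma$ to $Df$.

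\textbf{Open sets.} Let $A=O\subset\Omega$ be open. Exhaust $O$ by finite unions $V_k$ of open squares compactly contained in $O$ and chosen so that $|Df(\partial V_k)|=0$. This is possible because $Df$ is $W^{\frac23,3}$ on almost every line, so \cite[Theorem B.1]{li_pakzad_schikorra_21} and Remark \ref{rem:deg_formula_nobdry} apply after a Fubini-type choice of grid. Proposition \ref{prop:deg_when_mu_positive} then gives
\[
|Df(V_k)| \le \int \deg(Df,V_k,y)\,dy \le \mu(V_k)\le\mu(O).
\]
Since $Df(O)=\bigcup_k Df(V_k)$ is an increasing union, we get $|Df(O)|\le\mu(O)$.

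\textbf{Relatively closed sets.} Let $A$ be relatively closed. For compact $A$, outer regularity of $\mu$ gives open $O_j\downarrow A$ with $\mu(O_j)\to\mu(A)$, and $Df(A)\subset\bigcap_j Df(O_j)$, so $|Df(A)|\le\mu(A)$. For general relatively closed $A$, exhaust by the compacts $A\cap K_m$ and use monotonicity.

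\textbf{Bounded extrinsic curvature.} The Gauss map is $\nu=(-Df,1)/\sqrt{1+|Df|^2}$, i.e.\ the composition of $Df$ with inverse stereographic-type projection $\pi:\R^2\to\mathbb{S}^2_+$. Since $\pi$ is Lipschitz with constant $1$ on area, $\mathcal{H}^2(\nu(E))\le|Df(G_f^{-1}E)|$. For disjoint closed $E_i$, the preimages $A_i=G_f^{-1}(E_i)$ are disjoint relatively closed sets, so
\[
\sum_i \mathcal{H}^2(\nu(E_i))\le\sum_i\mu(A_i)\le\mu(\Omega)<\infty.
\]
If $\mu(\Omega)$ is only locally finite, the same estimate applies locally, which is the notion needed.

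\textbf{Ellipticity of regular points.} Let $x$ be regular with $p=G_f(x)$. By Proposition \ref{prop:index_at_regular_points} it suffices to show $i(x)=+1$. Choose a small disc $V$ around $x$ with $Df(z)\ne Df(x)$ for $z\in\overline V\setminus\{x\}$. Because $\pi$ is an orientation-preserving diffeomorphism (up to the sign convention for $\nu$), the winding number of $\nu(\partial V)$ around $\nu(p)$ equals $\deg(Df,V,Df(x))$. By \eqref{eq:deg_geq_1}, since $Df(x)\in Df(V)\setminus Df(\partial V)$, this degree is at least $1$. Hence $i(x)\ge1$, which excludes the hyperbolic, parabolic and flat cases, so $x$ is elliptic. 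Then $\Sigma_-=\emptyset$, so $\sigma^-(\Sigma)=0$ and $\Sigma$ has nonnegative curvature.

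\textbf{Main obstacle.} The delicate step is the sign and orientation convention relating $i(x)$ to $\deg(Df)$. I expect the orientation reversal of $\nu$ relative to $Df$ and the orientation of $\Sigma$ to cancel, but this must be checked against Pogorelov's convention. If they do not cancel, one fixes the sign of $\nu$ by the upward normal instead.
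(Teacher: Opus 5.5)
Your proposal is correct and is essentially the paper's proof. The area bound comes from Proposition~\ref{prop:deg_when_mu_positive} together with the null image of boundaries on which $Df\in W^{\frac23,3}$ (via slicing and Theorem B.1 of Li--Pakzad--Schikorra), and bounded extrinsic curvature and ellipticity then follow from the area formula for $\xi(z)=(1+|z|^2)^{-\frac12}(-z,1)$ and from $i(x)=\deg(Df,V,Df(x))\geq 1$, exactly as in the paper.

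The only differences are in bookkeeping and are harmless. You exhaust open sets by polygons on good grid lines and get compact sets from outer regularity of $\mu$, whereas the paper covers compact sets by finitely many balls with good boundary circles and then exhausts open or closed sets by compacts. Your orientation worry also dissolves: $\xi^*(d\sigma)=(1+|z|^2)^{-\frac32}\,dz^1\wedge dz^2$, so $\xi$ preserves orientation and the upward normal gives $i(x)=\deg(Df,V,Df(x))$ directly.
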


If we knew a priori that $f$ is convex, then $|Df(\cdot)|$ is the Alexandroff measure and agrees with $\mu$ and there is nothing to prove. We emphasise that in the proposition we only assume the non-negativity of the distributional Hessian determinant, rather than the convexity of $f$.

\begin{proof}
    The proof proceeds similarly to those of \cite[Proposition 3.1]{pakzad_24} and \cite[Theorem 3]{conti_deLellis_szekelyhidi_12} with some necessary adaptations. 
 
 \medskip
 \noindent\underline{Step 1.} We claim that if $V \subset\subset \Omega$ is an open set with $|Df(\partial V)| = 0$, then $
            |Df(V)| \leq \mu(V)$.
            
        Note that as $V$ is an increasing union of compact sets, $Df(V)$ is Borel-measurable. Since $Df(\partial V)$ is $\mathcal{L}^2$-null, Proposition \ref{prop:deg_when_mu_positive} and Remark \ref{rem:deg_formula_nobdry} give
        \begin{align*}
            |Df(V)| &= \int_{\R^2} 1_{Df(V) \setminus Df(\partial V)}~dy\\
            &\leq \int_{\R^2} \deg(Df, V,y)~dy\\
            &\leq \mu(V).
        \end{align*}

\medskip
 \noindent\underline{Step 2.} Next we claim that $|Df(K)| \leq \mu(K)$ for all compact sets $K \subset \Omega$. 
 
 Fix $\delta \in (0, \dist(K,\partial \Omega))$. By a Fubini-type theorem in fractional Sobolev spaces (see, e.g. \cite[Lemma 2.2]{li_schikorra}) we know that for every $x \in K$ there exists $r_x \in (0,\delta)$ such that $Df \in W^{\frac23,3}(\partial B_{r_x} (x))$ and in particular, $Df(\partial B_{r_x}(x))$ is $\mathcal{L}^2$-null \cite[Theorem B.1]{li_pakzad_schikorra_21}. Cover $K$ by finitely many such open balls: $K \subset \cup_{i=1}^N B_i$. Define
        \begin{equation*}
            \Gamma := \bigcup_{i=1}^N \partial B_i, \quad W := \bigcup_{i=1}^N B_i.
        \end{equation*}
        Then, $W \setminus \Gamma$ is a finite disjoint union of connected open sets $W_i$. Each boundary $\partial W_i$ is a union of finitely many circular arcs contained in $\Gamma$. Therefore, by step 1 applied to the $W_i$,
        \begin{align*}
            |Df(K)| \leq \sum_i |Df(W_i)| \leq \sum_i \mu(W_i) \leq \mu(K + B_\delta).
        \end{align*}
        In the above, it was crucial that the $W_i$ are disjoint and that each $Df(\partial W_i)$ is $\mathcal{L}^2$-null. Since $K + B_\delta$ decreases to $K$ and $\mu(K) < \infty$ we now let $\delta \searrow 0$ to conclude that $|Df(K)| \leq \mu(K)$.

\medskip
 \noindent\underline{Step 3.} If $A \subset \Omega$ is a relatively closed or open set, then $|Df(A)| \leq \mu(A)$. 
 
 To see this, observe that in either case, $A$ is a countable increasing union of compact sets $K_i$. Therefore, $Df(A)$ is Lebesgue-measurable and we pass to the limit $i \to \infty$ in the inequality $|Df(K_i)| \leq \mu(K_i) \leq \mu(E)$ from step 2 to prove the claim.

\medskip
 \noindent\underline{Step 4.}  Now, we prove that $\Sigma$ has bounded extrinsic curvature.
 
Let $\{E_i\}_{i=1}^N$ be any finite family of pairwise disjoint relatively closed subsets of $\Sigma$. Let $\pi$ denote the projection $\pi(x_1,x_2,x_3):=(x_1,x_2)$. Then, $\pi$ is a bijection from $\Sigma$ to $\Omega$ with inverse $G_f$. In particular $\pi : \Sigma \to \Omega$ is a homeomorphism. It follows that $A_i := \pi(E_i)$ are pairwise disjoint relatively closed subsets of $\Omega$ with $E_i = G_f(A_i)$. By step 3, we have
        \begin{equation*}
            \sum_{i=1}^N |Df(A_i)| \leq \sum_{i=1}^N \mu(A_i) \leq \mu(\Omega) < \infty.
        \end{equation*}
        Next, observe that the unit normal $\nu : \Sigma \to \mathbb{S}^2$ satisfies
        \begin{equation*}
            \nu \circ G_f = \xi\circ Df, \quad \xi(z):=(1+|z|^2)^{-\frac12}(-z,1)
        \end{equation*}
        and that $\xi$ is a diffeomorphism from $\R^2$ onto the upper hemisphere of $\mathbb{S}^2$. By a direct calculation, $|\partial_1 \xi \times \partial_2 \xi| \leq 1$ in $\R^2$; thus, the area formula yields
        \begin{equation*}
            \mathcal{H}^2(\nu(E_i)) = \int_{Df(A_i)} |\partial_1 \xi \times \partial_2 \xi|dx \leq |Df(A_i)|.
        \end{equation*}
        for each $i=1,\dots,N$. It follows that
        \begin{equation*}
            \sum_{i=1}^N \mathcal{H}^2(\nu(E_i)) \le \mu(\Omega).
        \end{equation*}
        Thus, $\Sigma$ has bounded extrinsic curvature. 

\medskip
 \noindent\underline{Step 5.} We check that all regular points are elliptic. 
 
 By the definition of the index and the fact that $\nu \circ G_f = \xi \circ Df$, we have
            \begin{equation*}
                i(p) = \deg(Df, B_r(x), Df(x))
            \end{equation*}
            for any $r>0$ sufficiently small (for a detailed justification, see, e.g. the proof of \cite[Proposition 3.1]{pakzad_24}). It follows then from \eqref{eq:deg_geq_1} that $i(p) \geq 1$, and hence by Proposition \ref{prop:index_at_regular_points} that $p$ is elliptic.
\end{proof}

We next show that if, in addition to the hypotheses of Proposition \ref{prop:graph_bec}, $f$ satisfies the boundary condition $f = 0$ on $\partial \Omega$ and the sign assumption $f < 0$ in $\Omega$, then $f$ is convex and $\mu$ agrees with the Alexandroff measure of $f$.

Recall that if $\Omega \subset \R^n$ is a convex domain and $f : \Omega \to \R$ is a convex function, the Alexandroff measure of $f$ is
\begin{equation*}
    \mu_f(A) := \mathcal{L}^n(\partial f(A)),
\end{equation*}
where $\partial f$ denotes the subgradient of $f$; this gives a well-defined positive Borel measure \cite{figalli_19}. 

\begin{proposition} \label{prop:graph_alexandroff}
    Let $\Omega \subset \R^2$ be a bounded domain, and $\mu$ a nonnegative finite Borel measure on $\Omega$. Assume that $f \in W^{1+\frac23,3}_{loc}(\Omega) \cap C^1(\Omega) \cap C^0(\overline \Omega)$ satisfies
\begin{equation*} 
\Det D^2 f = \mu \quad \text{in} \quad \mathcal{D}'(\Omega) 
\end{equation*}
together with $f = 0 \text{ on } \partial \Omega$ and $f < 0$ in a neighbourhood of some connected component $\gamma_1$ of $\partial \Omega$. Then, $\Sigma:=G_f(\Omega)$ is a convex surface. Hence, $\Omega$ is a convex domain and $f$ is convex on $\Omega$ with Alexandroff measure $\mu$.
\end{proposition}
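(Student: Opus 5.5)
The plan is to combine Proposition \ref{prop:graph_bec} with Pogorelov's convexity theorem in the form of Remark \ref{rem:pog_convex_cap}, and then identify $\mu$ with the Alexandroff measure. First, by Proposition \ref{prop:graph_bec} (with $\mu(\Omega)<\infty$), the surface $\Sigma = G_f(\Omega)$ has bounded extrinsic curvature, and all of its regular points are elliptic. Hence $\sigma^-(\Sigma)=0$, so $\Sigma$ has nonnegative curvature in Pogorelov's sense. Since $f\in C^0(\overline\Omega)$ and $f=0$ on $\partial\Omega$, the boundary of $\Sigma$ as a subset of $\R^3$ is $\partial\Omega\times\{0\}\subset\Pi:=\{x_3=0\}$.

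To apply Remark \ref{rem:pog_convex_cap}, we need $\Sigma$ to lie strictly on one side of $\Pi$, whereas we only know $f<0$ near $\gamma_1$. I would therefore first consider the connected component $\Omega_1$ of $\{f<0\}$ containing the neighbourhood of $\gamma_1$. Its boundary lies in $\partial\Omega\cup\{f=0\}$, so $G_f(\Omega_1)$ is a bounded surface of nonnegative extrinsic curvature, lying in $\{x_3<0\}$, with boundary in $\Pi$. By the Remark, $G_f(\Omega_1)$ is convex, so $f$ is convex on the convex set $\Omega_1$ (it is a graph part of the boundary of a convex body lying below $\Pi$).

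It remains to show $\Omega_1=\Omega$. Suppose not. Then $\partial\Omega_1$ contains an interior point $x_0\in\Omega$ with $f(x_0)=0$. A convex function that is negative on $\Omega_1$ and vanishes on $\partial\Omega_1$ has, at boundary points, supporting planes through $\{x_3=0\}\cap\partial$. I expect this to be the main obstacle. My first attempt is the following. Since $\Omega_1$ is convex and $\gamma_1\subset\partial\Omega_1$, the boundary $\partial\Omega_1$ is a single closed convex curve containing $\gamma_1$. Because $\gamma_1$ is a full boundary component of $\Omega$, closed, and the boundary of a convex domain is connected, we get $\partial\Omega_1=\gamma_1$. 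Then $\Omega_1$ is the region enclosed by $\gamma_1$, which contains $\Omega$ near $\gamma_1$. Connectedness of $\Omega$, together with the fact that the other components of $\partial\Omega$ would lie inside $\Omega_1$ where $f<0$, contradicts $f=0$ on $\partial\Omega$. Hence $\partial\Omega=\gamma_1$ and $\Omega=\Omega_1$, which is convex, and $\Sigma$ is convex with $f<0$ and convex on $\Omega$.

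Finally, I would identify the measures. For convex $f\in C^1$, the subgradient is $\partial f=\{Df\}$, so the Alexandroff measure is $\mu_f(A)=|Df(A)|$. Proposition \ref{prop:graph_bec} gives $\mu_f\le\mu$ on open and closed sets, hence on all Borel sets by regularity. For the reverse inequality, I would use that for convex $C^1$ functions the distributional $\Det D^2 f$ coincides with $\mu_f$. This follows by approximating $f$ through mollification: mollified convex functions are convex, their Monge--Ampère measures converge weakly to $\mu_f$, and $\Det D^2 f_\epsilon\to\Det D^2 f$ because $Df_\epsilon\to Df$ in $W^{\frac23,3}_{loc}$, using the continuity of the Jacobian. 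Hence $\mu=\mu_f$.
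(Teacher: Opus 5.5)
Your overall structure matches the paper's. You apply Proposition~\ref{prop:graph_bec} and Remark~\ref{rem:pog_convex_cap} to the component $\Omega_1$ of $\{f<0\}$ adjacent to $\gamma_1$, show $\Omega_1=\Omega$, and identify $\Det D^2 f$ with the Alexandroff measure by mollification. The gap is in the step you yourself flagged as the main obstacle.

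You infer $\partial\Omega_1=\gamma_1$ from three facts: $\partial\Omega_1$ is connected, it contains $\gamma_1$, and $\gamma_1$ is a connected component of $\partial\Omega$. That inference is only valid if you already know $\partial\Omega_1\subset\partial\Omega$. A priori you only have $\partial\Omega_1\subset\partial\Omega\cup\{x\in\Omega: f(x)=0\}$, and excluding interior zeros of $f$ from $\partial\Omega_1$ is exactly what must be proved. If $\gamma_1$ were known to be a Jordan curve, then $\gamma_1\subset\partial\Omega_1$ would force equality, since a Jordan curve contains no proper Jordan subcurve. However, the statement only assumes $\gamma_1$ is a component of $\partial\Omega$. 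Your subsequent argument about the other components of $\partial\Omega$ lying inside $\Omega_1$ rests on this unproved equality.

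The missing idea is to use the hypothesis $f<0$ near $\gamma_1$ a second time, not just to locate $\Omega_1$.
\begin{enumerate}
\item By maximality of the component, every point of $\partial\Omega_1\cap\Omega$ satisfies $f=0$.
\item $\partial\Omega_1$ is a connected closed convex curve meeting $\gamma_1$. If it were not contained in $\gamma_1$, it would have to pass through the collar $\{x\in\Omega:0<\dist(x,\gamma_1)<\epsilon\}$, where $f<0$. This is a contradiction, so $\partial\Omega_1\subset\gamma_1$. The inclusion suffices; you do not need equality.
\item $\Omega$ is connected and disjoint from $\partial\Omega_1$. By the Jordan curve theorem it lies either inside or outside $\partial\Omega_1$. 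Since $\Omega_1\subset\Omega$, it lies inside, so $\Omega=\Omega_1$.
\end{enumerate}
This is how the paper closes the argument.

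A minor point: taking ``the component containing the neighbourhood of $\gamma_1$'' presupposes that the collar is connected. It is safer, as in the paper, to take a component of $\{f<0\}$ whose boundary meets $\gamma_1$.

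Your final identification of $\mu$ with $\mu_f$ via mollification agrees with the paper. Once that equality is established, the inequality $\mu_f\le\mu$ from Proposition~\ref{prop:graph_bec} is superfluous.
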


\begin{remark}
    The convexity of $f \in C^{1,\alpha}(\Omega)$ with $\Det D^2 f \geq 0$ has been studied in \cite{pakzad_24, lewicka_pakzad_17}. It is not yet known, even for $\alpha > \frac23$ and $\Det D^2 f \geq \delta > 0$, whether convexity holds without imposing the planar boundary condition and one-sided condition above. On the other hand, for $\alpha < \frac13$ there exist abundant nonconvex such $f$ \cite{cao_hirsch_inauen_MA_25, cao_szekelyhidi_19, lewicka_pakzad_17}.
\end{remark}

\begin{proof}
    We begin by showing that $\Omega$ is a convex domain and that $f$ is convex on $\Omega$. Let $\Omega_- := \{x \in \Omega : f(x) < 0\}$. For each component $\Omega_i$ of $\Omega_-$, the graph $\Sigma_i := G_f(\Omega_i)$ of $f$ below $\Omega_i$ is a bounded, $C^1$ embedded open surface in $\R^3$, whose boundary lies in the plane $\{x_3 = 0\}$. By Proposition \ref{prop:graph_bec} applied to $f$ on $\Omega_-$, the graphical surface $\Sigma_- := G_f(\Omega_-)$ is a (possibly disconnected) surface of bounded extrinsic curvature and of nonnegative curvature. Hence, by Remark \ref{rem:pog_convex_cap} following Pogorelov's Proposition \ref{prop:pog_convexity}, each component $\Sigma_i$ of $\Sigma_-$ is a convex surface. Therefore, to prove the convexity of $\Omega$ and $f$ it suffices to show that $\Omega$ coincides with a connected component of $\Omega_-$.
    
    Without loss of generality, let $\Omega_1$ be a connected component of $\Omega_-$ whose boundary intersects $\gamma_1$. By the assumption that $f < 0$ in a neighbourhood of $\gamma_1$, such $\Omega_1$ exists. (It can be shown that $\Omega_1$ is unique and that $\partial \Omega_1$ contains $\gamma_1$ but we will not need these facts.) Let $\tilde\gamma_1 := \partial \Omega_1$ denote the boundary of $\Omega_1$. Since $\Sigma_1 = G_f(\Omega_1)$ is a bounded convex surface with boundary $\tilde\gamma_1$ on the plane $\{x_3 = 0\}$, we know that $\tilde\gamma_1$ is the intersection of a convex body $K$ with the plane $\{x_3 = 0\}$. Hence, $\tilde\gamma_1$ is either empty, a point, or a closed simple convex plane curve. Since $\tilde\gamma_1$ is the boundary of a nonempty bounded domain $\Omega_1$ in $\R^2$, it contains infinitely many points\footnote{Fix $x_0 \in \Omega_1$. For each direction $\xi \in \mathbb{S}^1$, let $r_\xi := \sup \{r > 0: x_0 + r\xi \in \Omega_1\}$. Then $x_0 + r_\xi \xi \in \partial \Omega_1$, so there is an injection from $\mathbb{S}^1$ into $\partial \Omega_1$.}. Hence, $\tilde\gamma_1$ is a closed simple convex plane curve. 

    We next claim that $\tilde\gamma_1$ is contained in $\gamma_1$. If not, then since $\tilde\gamma_1$ is a continuous curve which intersects $\gamma_1$ at some point $p$, $\tilde\gamma_1$ would have to pass through the neighbourhood of $\gamma_1$ in $\Omega$ where $f < 0$. But since $\tilde\gamma_1 \subset \partial \Omega_-$ there holds $f = 0$ along $\tilde\gamma_1$, a contradiction. Hence, $\tilde\gamma_1 \subset \gamma_1$.

    Now we argue that $\Omega = \Omega_1$, which implies that $\Omega$ is a convex domain and that $f$ is convex on $\Omega$. Note that $\Omega$ is a connected open subset of $\mathbb{R}^2 \setminus \gamma_1$, so that $\Omega \subset \mathbb{R}^2 \setminus \tilde\gamma_1$ by the previous claim. Since $\tilde\gamma_1$ is a closed simple curve, by the Jordan curve theorem $\Omega$ is contained either in the inside of $\tilde\gamma_1$ (namely, $\Omega_1$) or the (unbounded) outside of $\tilde\gamma_1$. Because $\Omega_1 \subseteq \Omega$, the former case holds, and $\Omega = \Omega_1$.
    
    Finally, since $f$ is convex and continuously differentiable, it is well known that its Alexandroff measure $\mu_f$ coincides with its distributional Hessian determinant $\Det D^2 f$. For completeness, we recall the argument here. Fix a test function $\psi \in C^\infty_c(\Omega)$ with support in $V \subset \subset \Omega$ and consider the mollifications $f_\epsilon := f*\rho_\epsilon$ with $0 < \epsilon < \epsilon_0:= \dist(V,\partial \Omega)$. Since $f$ is convex in $\Omega$, $f_\epsilon$ is convex in $V$ for any $\epsilon \in (0,\epsilon_0)$. Since $f_\epsilon \to f$ locally uniformly in $\Omega$, the Alexandroff measures converge weakly-star: $\mu_{f_\epsilon} \overset \ast \rightharpoonup \mu_f$ \cite[Proposition 3.2]{figalli_19}. Moreover, $f_\epsilon \to f$ in $H^1_{loc}(\Omega)$, so the distributional Hessian determinants converge. Thus,
    \begin{equation*}
        \Det D^2 f[\psi] = \lim_{\epsilon \searrow 0} \int_\Omega \det D^2 f_\epsilon \psi~dx = \lim_{\epsilon \searrow 0} \int_\Omega \psi~d\mu_{f_\epsilon} = \int_\Omega \psi ~d\mu_f.
    \end{equation*}
\end{proof}

\subsection{Proofs of the rigidity theorems} \label{sec:proofs_convexity_theorems}

\begin{proof}[Proof of Theorem \ref{thm:rigidity_closed}]
    Any point $u(x) \in \Sigma$ has a neighbourhood $V$ in $\Sigma$ which is the graph of a scalar function $f \in C^1(\overline B) \cap W^{1+\frac23,3}(B)$ for some ball $B \subset \R^2$. Moreover, Theorem \ref{thm:Gauss_23} implies that $f$ satisfies the Gauss equation \eqref{eq:gauss2} on $B$. Since $K_g \geq 0$, Proposition \ref{prop:graph_bec} says that $V := G_f(B)$ has bounded extrinsic curvature and is of nonnegative curvature. Covering the compact surface $\Sigma$ with finitely many such patches $V$, we conclude that $\Sigma$ has bounded extrinsic curvature and nonnegative curvature. We claim that the positive curvature of $\Sigma$ is nonzero. By \cite[Theorem 12, p. 600]{pogorelov_73} it is enough to show that there is an elliptic point, which we know from Proposition \ref{prop:elliptic_exists}. Hence, by Proposition \ref{prop:pog_convexity}(i), $\Sigma$ is then a closed convex surface. Such surfaces are known to be rigid in $\R^3$ \cite[Theorem 1, p. 167]{pogorelov_73}. Moreover, by the existence part of the Weyl problem (c.f. \cite[Chapter 9]{han_hong_06}), if $K_g > 0$, then there exists a $C^\infty$ isometric embedding $u_0 : (\mathbb{S}^2,g) \to \R^3$. Since $u_0$ differs from $u$ by a rigid motion, $u$ is also $C^\infty$.
\end{proof}

\begin{proof}[Proof of Theorem \ref{thm:rigidity_caps}]
    Composing $u$ with a rigid motion if necessary, we may assume that $\Pi = \{x_3=0\}$ and that $u^3 < 0$ in $\mathcal{N}$. The assumptions on $u$ imply that $u(U)$ is globally the graph of $f:= v\circ \Phi$, where $u = (\Psi, v)$ and $\Phi := \Psi^{-1}$ as before. Denoting again $U' := \Psi(U)$ we have $f \in W^{1+\frac23,3}_{loc}(U') \cap C^1(\overline U')$, and by Theorem \ref{thm:Gauss_23} $f$ satisfies
    \begin{equation} \label{eq:gauss_for_embedding}
        \Det D^2 f = K_g(\Phi)(1+|Df|^2)^2 \quad \text{in } \mathcal{D}'(U').
    \end{equation}
    with the conditions
    \begin{equation*}
        f = 0 \text{ on } \partial U', \quad f < 0 \text{ in } \Psi(\mathcal{N}) \subset U'.
    \end{equation*}
    
    In the case that $K_g \geq 0$, Proposition \ref{prop:graph_alexandroff} implies that $f$ is convex, $U'$ is a convex domain, $f$ solves \eqref{eq:gauss_for_embedding} in the Alexandroff sense, and $u(\overline U)$ is a convex surface.
    
    When $K_g > 0$, the smoothness of $f$ follows from well known theory of the Monge-Amp\`ere equation: Indeed, since $f \in C^1(\overline U')$ and there exist $\lambda, \Lambda > 0$ such that $\lambda \leq K_g \leq \Lambda$ in $U$, the right-hand side of \eqref{eq:gauss_for_embedding} has uniform positive upper and lower bounds in $U'$. Because the dimension is $n=2$ and $\det D^2f \geq \lambda$ in the Alexandroff sense, $f$ is strictly convex (see \cite{alexandroff}). By \cite[Theorem 2]{caffarelli_91}, $f \in C^{1,\alpha}_{loc}(U')$ for some $\alpha \in (0,1)$. Then, since the right-hand side of \eqref{eq:gauss_for_embedding} is in $C^{0,\alpha}_{loc}(U')$, \cite[Theorem 2]{caffarelli_90} implies that $f \in C^{2,\alpha}_{loc}(U')$. Boostrapping further yields smoothness of $f$, hence $u$.
\end{proof}

\appendix
\section{Comparison of Theorem \ref{thm:Gauss_23} with a result of Conti, De Lellis, and Székelyhidi} \label{sec:delellis_vs_MA}

We discuss now the relationship between Theorem \ref{thm:Gauss_23} and the version \eqref{eq:delellis_gauss} of the Gauss equation from Conti -- De Lellis -- Székelyhidi \cite[Proposition 6]{conti_deLellis_szekelyhidi_12}. 
\begin{proposition} \label{prop:CdLS_vs_Gauss}
    Let $U$, $g$, $u$, $f$ be as in case \eqref{case:Gauss23} of Theorem \ref{thm:Gauss_23}. Then \eqref{eq:delellis_gauss} is true.
\end{proposition}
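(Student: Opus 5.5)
Since both sides of \eqref{eq:delellis_gauss} are additive over $V$ and over $\phi$, the plan is to localise to a single graphical patch and reduce \eqref{eq:delellis_gauss} to the degree formula of Lemma \ref{lemma:Det_and_deg} (with $\delta=0$), applied to the graph function $f$. Fix $V\subset\subset U$ and $\phi\in L^\infty(\mathbb{S}^2)$ with $\operatorname{supp}\phi\subset\mathbb{S}^2\setminus\nu(\partial V)$. Using a partition of unity subordinate to graphical charts, one would like to cut $V$ into pieces. Localising the degree, however, requires pieces whose boundaries avoid the relevant preimages. I will therefore first prove the identity when $V$ is small enough that $u(V)$ is a graph $f$ over $U'\supset\Psi(V)$ with $\nu=\xi\circ Df\circ\Psi$, where $\xi(z)=(1+|z|^2)^{-1/2}(-z,1)$ is the diffeomorphism onto the upper hemisphere from the proof of Proposition \ref{prop:graph_bec}. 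Then I will glue the pieces.

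For a single graphical patch with $V'=\Psi(V)$, degree theory gives $\deg(\nu\circ\Phi,V',\xi(z))=\deg(\xi\circ Df,V',\xi(z))=\deg(Df,V',z)$, since $\xi$ is orientation preserving with the right choice of normal. The change of variables $y=\xi(z)$ then turns the right-hand side of \eqref{eq:delellis_gauss} into $\int_{\R^2}\phi(\xi(z))J\xi(z)\deg(Df,V',z)\,dz$, where $J\xi=(1+|z|^2)^{-3/2}$. For the left-hand side, changing variables by $\Phi$ and using $\Phi^*dvol_g=(1+|Df|^2)^{1/2}dx$ gives $\int_{V'}\phi(\xi(Df))K_g(\Phi)(1+|Df|^2)^{1/2}dx$. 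By Theorem \ref{thm:Gauss_23}(ii) this equals $(\Det D^2f)|_{V'}[\tilde\phi\circ Df]$, where $\tilde\phi:=(\phi\circ\xi)J\xi$, because $(1+|Df|^2)^{1/2}=(1+|Df|^2)^2J\xi(Df)$. Lemma \ref{lemma:Det_and_deg} with $\delta=0$ then identifies this with $\int\tilde\phi\deg(Df,V',\cdot)$, which is the claim. Two points need care here. First, Lemma \ref{lemma:Det_and_deg} is stated for smooth $\phi$ compactly supported off $Df(\partial V')$, so I will approximate a merely bounded $\phi$ by smooth $\phi_k$ supported in a fixed compact set $K$ disjoint from $\nu(\partial V)$, converging boundedly a.e. The left-hand side then converges by dominated convergence, since $K_g\,dvol_g$ is a finite measure and a.e. convergence on $\mathbb{S}^2$ must be transferred through $\nu$. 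This transfer is delicate. I would handle it through the right-hand form, since the degree is integrable on $K$, and through the fact that the Gauss equation expresses the pushforward $(Df)_\#(\Det D^2 f|_{V'})$ as $\deg\,dz$ by the formula itself. That pushforward is absolutely continuous, so a.e. convergence suffices. Second, the restriction $(\Det D^2f)|_{V'}$ should be read as integration against the $L^1$ density, which is legitimate because Theorem \ref{thm:Gauss_23} makes $\Det D^2 f$ an $L^1$ function.

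For general $V$, I will cover $\overline V$ by finitely many graphical balls $B_i$. Using the Fubini-type result \cite[Lemma 2.2]{li_schikorra} as in Step 2 of Proposition \ref{prop:graph_bec}, I choose their radii so that $u\in W^{1+\frac23,3}(\partial B_i)$, and hence $\nu(\partial B_i)$ is $\mathcal{H}^2$-null by \cite[Theorem B.1]{li_pakzad_schikorra_21}. Decompose $V$ into the disjoint open pieces $W_j$ cut out by the circles and $\partial V$. Additivity of the degree then gives $\deg(\nu,V,y)=\sum_j\deg(\nu,W_j,y)$ for $y$ off the null set $\nu(\Gamma)$, where $\Gamma$ is the union of the circles. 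The left-hand sides add because $\Gamma$ is Lebesgue-null.

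The main obstacle is that $\operatorname{supp}\phi$ avoids $\nu(\partial V)$ but not $\nu(\partial W_j)$. I would overcome it by first proving a version of the single-patch identity valid for $\phi\in L^\infty$ on all of $\mathbb{S}^2$ whenever $\nu(\partial W)$ is null, as in Remark \ref{rem:deg_formula_nobdry}. This uses the exhaustion $\phi_k\nearrow 1_{\mathbb{S}^2\setminus\nu(\partial W)}$ and monotone convergence exactly as in the proof of \eqref{eq:deg_in_L1}, splitting $\phi$ into positive and negative parts. One further check is needed: integrability of $\deg(Df,W_j,\cdot)$ with no sign assumption on $K_g$. For this I plan to use $|\Det D^2 f|$ together with the same exhaustion, bounding $\int|\deg|$ via Lemma \ref{lemma:Det_and_deg} tested against $\pm$ cutoffs.
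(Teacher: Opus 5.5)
Your single-patch computation is exactly the paper's proof. You change variables by $\Phi$, use the Gauss equation to write the left-hand side as $\Det D^2 f|_{V'}[\tilde\phi\circ Df]$ with $\tilde\phi=(\phi\circ\xi)J\xi$, apply Lemma~\ref{lemma:Det_and_deg} with $\delta=0$, and pull back through the orientation-preserving map $\xi$. This already settles the proposition. Under the hypotheses of case (ii) of Theorem~\ref{thm:Gauss_23}, $u(U)$ is a single graph over $U'$, so every $V\subset\subset U$ is one patch and no gluing is needed.

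You should drop the gluing step anyway, because as sketched it does not work. Exhausting with $\chi_k\nearrow 1_{\mathbb{S}^2\setminus\nu(\partial W)}$ only gives the identity with the left-hand side integrated over $W\setminus\nu^{-1}(\nu(\partial W))$; compare the mere inequality in \eqref{eq:deg_in_L1}. Nothing in your argument shows that $K_g\,dvol_g$ does not charge $\nu^{-1}(\nu(\Gamma))$. Moreover, the pieces $W_j$ adjacent to $\partial V$ need not have $\mathcal{H}^2$-null boundary image.

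Your passage to $\phi\in L^\infty$ goes beyond what the paper writes, since the paper only treats $\phi\in C^\infty_c(\mathbb{S}^2\setminus\nu(\partial V))$. It is correct if you phrase it entirely through the pushforward. Set $\lambda:=\nu_\#(K_g\,dvol_g|_V)$, a finite signed measure. The smooth identity says that $\lambda$ coincides with $\deg(\nu,V,\cdot)\,d\sigma$ on the open set $\mathbb{S}^2\setminus\nu(\partial V)$. Hence, for every bounded Borel $\phi$ supported in a compact subset of that set,
\[
\int_V\phi(\nu)K_g\,dvol_g=\int\phi\,d\lambda=\int\phi\,\deg(\nu,V,\cdot)\,d\sigma .
\]
Absolute continuity of $\lambda$ there makes this independent of the representative of $\phi$.

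Do not argue instead by pointwise convergence of $\phi_k\circ\nu$ on $V$. That would require $|K_g|\,dvol_g(\nu^{-1}(N))=0$ for every $\sigma$-null $N$. When $K_g$ changes sign, this does not follow from absolute continuity of the signed pushforward.
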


\begin{proof}
    Let $V \subset\subset U$ and $\phi \in C^\infty_c(\mathbb{S}^2 \setminus \nu(\partial V))$. As in the proof of Proposition \ref{prop:graph_bec}, let $\xi : \R^2 \to \mathbb{S}^2$ be given by $\xi(z) := (1+|z|^2)^{-\frac12}(-z,1)$ so that $\nu = \xi \circ Df$ on $U'$. Recall that $\xi$ is a diffeomorphism onto the upper hemisphere, and it pulls back the volume form $d\sigma$ on $\mathbb{S}^2$ via 
    \begin{equation*}
        \xi^*(d\sigma) = |\partial_1 \xi \times \partial_2 \xi|~dz^1 \wedge dz^2 = (1+|z|^2)^{-\frac32} dz^1 \wedge dz^2.
    \end{equation*}
    Letting $V' := \Psi(V) \subset \subset U'$, changing variables via $\Phi$, and using \eqref{eq:gauss2} we have
    \begin{align*}
        \int_V K_g(x)\phi(\nu(x))dvol_g(x) &= \int_{V'} K_g(\Phi(x)) \phi\big(\xi(Df(x))\big) (1+|Df|^2)^{\frac12} dx\\
        &= \Det D^2f|_{V'} [\psi \circ Df]
    \end{align*}
    where we defined
    \begin{equation*}
        \psi := (\phi \circ \xi ) |\partial_1\xi \times \partial_2 \xi| \in C^\infty_c(\R^2 \setminus Df(\partial V')).
    \end{equation*}
    By Lemma \ref{lemma:Det_and_deg}, we have
    \begin{align*}
        \Det D^2f|_{V'}[\psi\circ Df] = \int_{\R^2 \setminus Df(\partial V')} \psi(z) \deg(Df, V', z) dz.
    \end{align*}
    Furthermore, one has
    \begin{equation*}
        \deg(Df, V',y) = \deg(\nu,V,\xi(y))
    \end{equation*}
    so we pull back via $\xi$ to find
    \begin{align*}
        \int_{\R^2 \setminus Df(\partial V')} \psi(z) \deg(Df, V',z)dz &= \int_{\mathbb{S}^2 \setminus \nu(\partial V)} \psi(\xi^{-1}(y))\deg(\nu,V,y) ~(\xi^{-1})^*dz\\
        &= \int_{\mathbb{S}^2 \setminus \nu(\partial V)} \phi(y) \deg(\nu,V,y) ~d\sigma(y).
    \end{align*}
    Therefore, \eqref{eq:delellis_gauss} holds. 
\end{proof}

\section{Cone-type example violating the Gauss equation} \label{sec:examples}

In this appendix we show that there exists an isometric embedding $u : B \subset \R^2 \to \R^3$ of the Euclidean metric with $ u \in W^{2,p} \cap C^{0,1}(B;\R^3) \cap C^\infty(B \setminus \{0\};\R^3)$ for any $1 \leq p < 2$, such that the embedded surface is the graph of a function $f$ defined on a planar domain with $\Det D^2f$ equal to a nonzero multiple of a Dirac mass. Our construction is inspired by \cite{liu_maly}. More specifically, we consider isometric embeddings $u : B \subset \R^2 \to \R^3$ given in polar coordinates by
\begin{equation} \label{eq:u_from_gamma}
        u(r\cos\theta,r\sin\theta) := r\gamma(\theta),
\end{equation}
where $\gamma: \mathbb{S}^1 \to \mathbb{S}^2$ is a smooth unit-speed simple closed curve. Note that $\gamma$ has length $2\pi$. It is a routine calculation to show that $u \in W^{2,p}\cap C^{0,1}(B;\R^3) \cap C^\infty(B \setminus \{0\};\R^3)$ for any $1 \leq p < 2$ and $u$ is an isometric immersion of the Euclidean metric, i.e. satisfies $Du^T Du = I$ almost everywhere. Geometrically, the embedded surface is the union of all straight line segments in $\R^3$ joining points of $\gamma$ to the origin.

\begin{proposition} \label{prop:cone_example}
    There exists a smooth unit-speed simple closed curve $\gamma : \mathbb{S}^1 \to \mathbb{S}^2$ such that the isometric embedding $u \in W^{2,p}\cap C^{0,1}(B;\R^3) \cap C^\infty(B \setminus \{0\};\R^3)$ ($1 \leq p < 2$) given in polar coordinates by \eqref{eq:u_from_gamma} is the graph of a function $f$ with analogous regularity defined on a star-shaped domain around $0 \in \R^2$ such that 
    \begin{equation*}
        \Det D^2f = c\delta_0 \quad \text{with } c \neq 0.
    \end{equation*}
\end{proposition}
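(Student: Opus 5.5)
We construct $\gamma$ explicitly as a curve staying in the open upper hemisphere. Take $\gamma$ to be a small circle of latitude perturbed, or simply a circle: $\gamma(\theta) = (\sin\beta\cos(\theta/\sin\beta)\cdot\ldots)$ fails since unit-speed length must be exactly $2\pi$ while a latitude circle at polar angle $\beta$ has length $2\pi\sin\beta<2\pi$. So instead we choose $\gamma$ to be a smooth simple closed curve of length $2\pi$ contained in $\{y_3>0\}$ whose radial projection to the plane $\{y_3=1\}$ is a star-shaped curve around the origin. A concrete choice is a ``wavy'' latitude curve $\gamma = (\sin\beta(t)\cos t, \sin\beta(t)\sin t, \cos\beta(t))$ with $\beta(t) = \beta_0 + a\sin(kt)$, $\beta_0,a$ small. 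Its length varies continuously in $a$ and tends to infinity as $k\to\infty$ for fixed $a$. Hence for suitable parameters it equals $2\pi$, and we reparametrise by arc length. Then the cone $\{r\gamma(\theta)\}$ is the graph over the projected domain $U'=\{x : x = r\pi_{12}\gamma(\theta), r<1\}$ (star-shaped since $\gamma_3>0$ and $t\mapsto$ angle of $\pi_{12}\gamma$ is monotone) of the $1$-homogeneous function $f(x) = |x|\,h(\arg x)$. Here $h(\phi) = \gamma_3/|\pi_{12}\gamma|$ expressed in the planar angle $\phi=t$. Regularity of $f$ follows as for $u$: $D f$ is $0$-homogeneous, hence bounded, and $D^2f=O(|x|^{-1})\in L^p$ for $p<2$.

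Next we compute $\Det D^2 f$. Away from $0$, $f$ is smooth and $1$-homogeneous, so $D^2 f\, x = 0$ and $\det D^2 f = 0$ pointwise there. This is consistent with the Gauss equation since the metric is flat. For a test function $\psi$, we use the divergence form $\det D^2 f = \tfrac12\operatorname{div}(\operatorname{cof}D^2 f\, Df)$, valid for smooth $f$ and extended to $f\in W^{2,p}\cap W^{1,\infty}$, $p<2$? This requires care, so instead we compute $\Det D^2 f[\psi] = \lim_{\rho\to0}$ over $\{|x|>\rho\}$ by integrating by parts. Because $Df\in L^\infty$ and $D^2f\in L^1$, the distributional expression (the three-term formula from the introduction, or $-\int f_{x_1}D^\perp\psi\cdot Df_{x_2}$ as in Lemma \ref{lemma:DetFormulaBV}) is an absolutely convergent integral. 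We may therefore excise $B_\rho$, integrate by parts on the annulus (no interior contribution since $\det D^2f=0$), and get a boundary term on $\partial B_\rho$. By homogeneity, this boundary term tends to $\psi(0)\cdot c$, where
\[
c=\tfrac12\oint_{\partial B_1} Df^1\, dDf^2 - Df^2\, dDf^1 ,
\]
i.e.\ $c$ is the signed area enclosed by the closed curve $\sigma:=Df|_{\partial B_1}$, counted with winding. Equivalently, $c = \int_{\R^2}\deg(Df, B_1, y)\,dy$.

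It remains to choose $\gamma$ so that $c\neq0$. Note $\sigma(\phi) = h(\phi)e_\phi + h'(\phi)e_\phi^\perp$, the classical support-type parametrisation. Its signed area equals $\tfrac12\int (h^2 - h'^2)\,d\phi$, which follows from $\sigma' = (h+h'')e_\phi^\perp$, so that $\sigma\times\sigma' = h(h+h'')$, and then integrating by parts. Since $\nu=\xi\circ Df$, this is also the signed spherical area enclosed by the Gauss image, which equals $2\pi-$ (enclosed area of the geodesic curvature integral) $\ne0$ generically. Concretely, for the nearly-circular choice with $a$ small and $\beta_0$ near the value where the length is about $2\pi$ (which requires large $k$), $h\approx\cot\beta_0$ plus an oscillation, and $\tfrac12\int(h^2-h'^2)$ is not automatically signed. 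I expect this sign/nonvanishing verification to be the main obstacle. I would first try the simplest fix: pick $\gamma$ as a circle on $\mathbb S^2$ of polar radius $\beta$ with length $2\pi$ is impossible, so instead use a curve of length $2\pi$ whose Gauss image winds once. By Gauss–Bonnet for the cone, the signed area equals $2\pi-\int k_g\,ds$ on $\mathbb S^2$ relative to the enclosed cap. This equals the enclosed spherical area $A(\gamma)^\ast$ of the polar curve, which can be computed and made nonzero. Alternatively we can compute $\tfrac12\int(h^2-h'^2)$ numerically-in-closed-form for $k$ large, where $h'^2$ dominates, giving $c<0$.
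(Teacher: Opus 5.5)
The reduction in your proposal is sound and matches the paper's. The cone is the graph of the $1$-homogeneous function $f(x)=|x|h(\arg x)$, $\det D^2 f=0$ away from the origin, and excising $B_\rho$ gives $\Det D^2 f=c\,\delta_0$ with $c=\tfrac12\int_0^{2\pi}(h^2-h'^2)\,d\phi$. This is exactly Lemma~\ref{lemma:detD2_cone}.

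The gap is the step you flag yourself: you never show $c\neq 0$ for a curve that also satisfies the length constraint $\mathcal{L}[h]=2\pi$. Neither of your suggested fixes works as stated.

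\textbf{The Gauss--Bonnet route.} $c$ is the \emph{planar} signed area enclosed by $Df|_{\partial B_1}$, not the spherical area of the Gauss image, because $\xi$ distorts area by the factor $(1+|y|^2)^{-3/2}$. In fact, for $h>0$ the signed spherical area of the Gauss image is exactly $2\pi-\mathcal{L}[h]$. To check this, set $Q=1+h^2+h'^2$ and $P=h^2+h'^2$; then $\frac{h(h+h'')}{P\sqrt Q}-\frac{\sqrt Q}{1+h^2}$ is the derivative of the periodic function $\arctan\frac{h'}{h\sqrt Q}$. So under your normalization the spherical quantity vanishes identically. That route would prove the opposite of what you need; the nonvanishing of $c$ comes precisely from the planar weighting.

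\textbf{The ``$h'^2$ dominates'' route.} This ignores the constraint. In your wavy family, $\mathcal{L}\geq\int|\beta'|=4ak$, so $\mathcal{L}=2\pi$ forces $ak\le\pi/2$. Hence $\int h^2$ and $\int h'^2$ stay comparable rather than one dominating. Near the equator, the regime you mention, one has $\mathcal{L}[h]=2\pi-c[h]+O(\|h\|_{C^1}^4)$, so the sign of $c$ is decided only at fourth order.

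Your family can be rescued. Let $b=\lim ak$ along a subsequence as $k\to\infty$. Strict Jensen gives $1=\frac1{2\pi}\int_0^{2\pi}\sqrt{\sin^2\beta_0+b^2\cos^2 s}\,ds<\sqrt{\sin^2\beta_0+b^2/2}$, hence $b^2>2\cos^2\beta_0$. It follows that $c\to\pi\bigl(\cot^2\beta_0-\tfrac{b^2}{2}\csc^4\beta_0\bigr)<0$. However, this estimate is the heart of the matter and it is absent from your proposal.

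\textbf{The paper's approach.} The paper avoids the coupling altogether by scaling. Fix a smooth $z_0>0$ with $c[z_0]<0$, for example $z_0=1+\tfrac12\cos 4\phi$, and take $h=az_0$. Since $c$ is quadratic, $c[az_0]=a^2c[z_0]<0$ for every $a>0$, so the sign is settled once and for all. Meanwhile $\mathcal{L}[az_0]=2\pi-a^2c[z_0]+O(a^4)>2\pi$ for small $a$, and $\mathcal{L}[az_0]\to 0$ as $a\to\infty$ because $\inf z_0>0$. The intermediate value theorem then gives an $a$ with $\mathcal{L}[az_0]=2\pi$, and $c\neq 0$ comes for free.
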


It is more convenient to work with the projection $U'$ of the tentative embedded surface on the $x_1 x_2$ plane. We assume that $U'$ is a star-shaped domain around $0$ given in polar coordinates by
\begin{equation*}
    U' = \big\{ (\tilde r \cos\tilde\theta, \tilde r \sin\tilde\theta) : \tilde\theta \in [0,2\pi], ~\tilde r \in [0,\rho(\tilde\theta)) \big\}, 
\end{equation*}
where $\rho : \mathbb{S}^1 \to (0,1)$ is smooth. In particular, the boundary of $U'$ is a closed simple curve encircling the origin. Introduce now $    z := \rho^{-1}(1-\rho^2)^\frac12$,
so that the embedded surface is the graph of $f : U' \to \mathbb{R}$ defined by $   f(\tilde r \cos\tilde\theta, \tilde r\sin\tilde\theta) := \tilde r z(\tilde\theta)$. It is clear that $f \in W^{2,p} \cap C^{0,1}(B) \cap C^\infty(B \setminus \{0\})$ for any $1 \leq p < 2$. We require that the graph of $f$ over $\partial U'$ has length $2\pi$, that is
\begin{equation} \label{eq:gamma_length_2pi}
      \mathcal{L}[z] := \int_0^{2\pi} \frac {(1+z^2 + \dot{z}^2)^{\frac12}} {1+z^2} d\tilde\theta = 2\pi,
\end{equation}
so that $\gamma$ is the parametrisation by arclength of $f(\partial U')$. Moreover, by a simple calculation, $\det D^2 f = 0$ classically in $B \setminus \{0\}$. This together with \eqref{eq:gamma_length_2pi} corresponds to the statement that the map $u$ defined by \eqref{eq:u_from_gamma} is a smooth isometric embedding of $B \setminus \{0\}$ into $\R^3$.
\begin{lemma} \label{lemma:detD2_cone}
    In the setup above, we have
    \begin{equation} \label{eq:def_c}
        \Det D^2 f = c[z] \delta_0, \quad c[z] = \frac12 \int_0^{2\pi} (z^2 - \dot z^2) d\tilde\theta.
    \end{equation}
\end{lemma}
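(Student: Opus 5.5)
We plan to compute $\Det D^2 f$ directly from its divergence-form definition,
\begin{equation*}
\Det D^2 f[\psi] = \int_{U'} \Big( f_{x_1}f_{x_2}\,\psi_{x_1x_2} - \tfrac12 f_{x_1}^2\,\psi_{x_2x_2} - \tfrac12 f_{x_2}^2\,\psi_{x_1x_1}\Big)\,dx ,
\end{equation*}
which makes sense because $Df \in L^\infty$. Since $f$ is smooth away from the origin and $\det D^2 f = 0$ classically on $U'\setminus\{0\}$, the distribution $\Det D^2 f$ is supported at $\{0\}$. It is also of order at most two, being built from $L^\infty$ functions paired with second derivatives. Hence
\begin{equation*}
\Det D^2 f = c\,\delta_0 + \sum_i a_i\,\partial_i\delta_0 + \sum_{i,j} b_{ij}\,\partial_{ij}\delta_0 .
\end{equation*}
Now $f$ is $1$-homogeneous, so $Df$ is $0$-homogeneous and the quadratic expression in $Df$ is $0$-homogeneous. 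Testing against dilations $\psi(\cdot/\lambda)$ shows that $\Det D^2 f$ is homogeneous of degree $-2$, exactly like $\delta_0$. The derivative terms have degrees $-3$ and $-4$, so all $a_i$ and $b_{ij}$ vanish, and it only remains to identify the constant $c$.

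To compute $c$, we test against a radial cutoff $\psi(x)=\chi(|x|)$ with $\chi\equiv 1$ near $0$ and supported in a small ball inside $U'$. Rather than expanding the second derivatives of $\psi$, we integrate by parts on the annulus region where $f$ is smooth. The cleanest route is to use the smooth-case identity $\det D^2 f = \tfrac12\operatorname{div}\big(\ldots\big)$ in the form
\begin{equation*}
\Det D^2 f[\psi] = -\tfrac12\int \big(f_{x_1}\,D^\perp f_{x_2} - f_{x_2}\,D^\perp f_{x_1}\big)\cdot D\psi\,dx ,
\end{equation*}
or, equivalently via Lemma \ref{lemma:DetFormulaBV}-type formulas, $-\int f_{x_1} D^\perp\psi \cdot D f_{x_2}\,dx$. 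This is legitimate since $D^2 f \in L^p$ for $p<2$ while $D\psi$ is bounded and vanishes near $0$. Because $D\psi = \chi'(r)\,e_r$ is supported in an annulus, the integral becomes
\begin{equation*}
\int \chi'(r)\, F(\tilde\theta)\, r^{-1}\, r\,dr\,d\tilde\theta = -\int_0^{2\pi} F(\tilde\theta)\,d\tilde\theta ,
\end{equation*}
where $F$ is the angular density obtained from the $0$-homogeneous $Df$ and the $(-1)$-homogeneous $D^2 f$. Equivalently, $c$ is the winding-type integral $\tfrac12\oint_{|x|=r} Df\wedge d(Df)$. In other words, $c$ is the signed area enclosed by the closed curve $\tilde\theta \mapsto Df(\cos\tilde\theta,\sin\tilde\theta)$, which is consistent with the degree interpretation in Lemma \ref{lemma:Det_and_deg}.

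Finally we compute this curve explicitly. With $f = \tilde r z(\tilde\theta)$ we have
\begin{equation*}
Df = z\,e_r + \dot z\,e_\theta =: P(\tilde\theta), \qquad \dot P = (z+\ddot z)\,e_\theta .
\end{equation*}
The signed area is therefore
\begin{equation*}
\tfrac12\int_0^{2\pi} P\wedge\dot P\,d\tilde\theta = \tfrac12\int_0^{2\pi} z\,(z+\ddot z)\,d\tilde\theta ,
\end{equation*}
and integrating by parts gives $\tfrac12\int_0^{2\pi}(z^2-\dot z^2)\,d\tilde\theta = c[z]$.

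The main obstacle is bookkeeping rather than analysis. One has to get the overall sign and factor of the integration by parts consistent with the divergence form, and track orientation correctly. I would fix these by checking against a test case such as $z\equiv 1$, i.e. $f=|x|$. There the gradient curve is the unit circle, which predicts $c=\pi$, matching the formula.
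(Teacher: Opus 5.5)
Your argument is correct, but it is organised differently from the paper's. The paper does not use the structure theorem for point-supported distributions. It writes $\Det D^2 f = \operatorname{div} V$ with $V = \frac12 \operatorname{cof}(D^2 f)\,Df$ and tests against an \emph{arbitrary} $\psi \in C^\infty_c$. It then excises $B_{\tilde r}$, uses $\operatorname{div} V = 0$ classically on the annulus, and evaluates the flux through $\partial B_{\tilde r}$ using $V\cdot\nu = \frac{1}{2\tilde r}(\ddot z + z)z$. Continuity of $\psi$ at $0$ then yields $\psi(0)\cdot\frac12\int_0^{2\pi}(\ddot z+z)z\,d\tilde\theta$ in one step, so the absence of derivatives of $\delta_0$ comes for free.

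You instead proceed in three steps:
\begin{itemize}
\item You first reduce to $c\delta_0$ plus derivatives of $\delta_0$ of order at most two.
\item You eliminate the derivative terms by the dilation invariance $\Det D^2 f[\psi(\cdot/\lambda)] = \Det D^2 f[\psi]$. You should restrict this to $\lambda \in (0,1]$, using that $U'$ is star-shaped, or else extend $f$ $1$-homogeneously to $\R^2$.
\item Only then do you compute $c$ with a radial cutoff.
\end{itemize}
That last computation is exactly the paper's flux computation averaged over radii via $\int_0^\infty \chi'(r)\,dr = -1$, since the flux through $\partial B_r$ is independent of $r$.

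The paper's route is shorter and needs less machinery. Yours makes the role of $1$-homogeneity explicit. It also gives the pleasant interpretation of $c$ as the signed area enclosed by the gradient curve $Df(\partial B_1)$, the analogue of the Alexandroff mass at the vertex of a convex cone. This explains at once why $f = |x|$ gives $\pi\delta_0$.

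One bookkeeping slip: your two displayed formulas for $\Det D^2 f[\psi]$ are not equivalent. With the convention $D^\perp = (-\partial_2,\partial_1)$ of Lemma \ref{lemma:DetFormulaBV}, one has $f_{x_1}D^\perp f_{x_2} - f_{x_2}D^\perp f_{x_1} = -\operatorname{cof}(D^2 f)\,Df$. So your first formula equals $-\Det D^2 f[\psi]$, whereas $-\int f_{x_1}D^\perp\psi\cdot Df_{x_2}\,dx$ is correct. With the opposite convention for $D^\perp$ the roles are swapped.

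Your $f=|x|$ check would catch this, but in a written proof you should simply use the second formula, or $-\int V\cdot D\psi\,dx$. With a radial cutoff it gives directly $c = \int_0^{2\pi} P_1\dot P_2\,d\tilde\theta = \frac12\int_0^{2\pi} P\wedge \dot P\,d\tilde\theta$ with the right sign.

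Finally, the link to Lemma \ref{lemma:Det_and_deg} can only be heuristic here. That lemma needs $f \in C^1 \cap W^{1+\frac23,3}$, and $Df$ is discontinuous at $0$.
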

\begin{proof}
    Recall that $\Det D^2f = \mathrm{div}~V$ in the distributional sense in $B$, where $V := \frac12 \mathrm{cof}(D^2f) Df$. Thus, for any test function $\psi \in C^\infty_c(B)$ we have
    \begin{align*}
         \Det D^2 f[\psi] &= -\lim_{\tilde r \searrow 0} \int_{B \setminus B_{\tilde r}} V \cdot D\psi~dx\\
         &= \lim_{\tilde r \searrow 0} \Big( \int_{B \setminus B_{\tilde r}} \mathrm{div}~V \psi~dx + \int_{\partial B_{\tilde r}} \psi V \cdot \nu ~dS  \Big).
    \end{align*}
    Note that the first term is zero since $\mathrm{div}~V = 0$ classically away from $0$. For the second term, we use that
    $V \cdot \nu = \frac1{2\tilde r}(\ddot z + z)(\tilde\theta) z(\tilde\theta)$, which gives
    \begin{align*}
        \Det D^2 f[\psi] = \frac12 \psi(0) \int_0^{2\pi} (\ddot z + z) z~d\tilde\theta = c[z]\psi(0).
    \end{align*}
\end{proof}

\begin{proof}[Proof of Proposition \ref{prop:cone_example}]
    By Lemma \ref{lemma:detD2_cone}, we only need to show that there exists some $z \in C^\infty(\mathbb{S}^1)$ such that \eqref{eq:gamma_length_2pi} holds and the constant $c[z]$ from \eqref{eq:def_c} is nonzero. More specifically, $z$ will take the form $z(\tilde \theta)  = a z_0(\tilde\theta)$ where $z_0 \in C^\infty(\mathbb{S}^1;(0,\infty))$ is a positive smooth function and $a \in (0,\infty)$ is a constant to be selected.

    A simple computation gives
    \begin{align*}
    \mathcal{L}[0] 
        &= 2\pi,\qquad    \frac{d}{da}\Big|_{a=0} \mathcal{L}[a z_0] 
        = 0, \qquad    \frac{d^2}{da^2}\Big|_{a=0} \mathcal{L}[a z_0] 
        = -2c[z_0].
    \end{align*}
    Moreover, since $\inf_{\mathbb{S}^1} z_0 > 0$, there holds
    \begin{equation*}
        \lim_{a \rightarrow \infty} \mathcal{L}[az_0]  = 0.
    \end{equation*}
    Therefore, provided $c[z_0] < 0$, there exists $a \in (0,\infty)$ such that $\mathcal{L}[az_0] = 2\pi$, that is \eqref{eq:gamma_length_2pi} is satisfied. Hence, as $c[az_0] = a^2 c[z_0]$, we only need to exhibit $z_0 \in C^\infty(\mathbb{S}^1;(0,\infty))$ such that $c[z_0] < 0$. There are many such functions, e.g. $z_0(\tilde\theta) := 1 + \frac{1}{2}\cos (4\tilde\theta)$.
\end{proof}

%%%%%

\subsection*{Rights retention statement} 

For the purpose of Open Access, the authors have applied a CC BY public copyright licence to any Author Accepted Manuscript (AAM) version arising from this submission.

%%%%%

\end{document}